\theoremstyle{definition}
\newtheorem{defin}{Definition}[section]
\newtheorem{ex}[defin]{Example}
\newtheorem{rem}[defin]{Remark}
\theoremstyle{plane}
\newtheorem{thm}[defin]{Theorem}
\newtheorem{prop}[defin]{Proposition}
\newtheorem{coroll}[defin]{Corollary}
\newtheorem{lemma}[defin]{Lemma}
\newcommand{\mbb}{\mathbb}
\newcommand{\mc}{\mathcal}
\newcommand{\veps}{\varepsilon}
\newcommand{\wtilde}{\widetilde}
\newcommand{\vphi}{\varphi}
\newcommand{\oline}{\overline}
\newcommand{\ra}{\rightarrow}
\newcommand{\hra}{\hookrightarrow}
\newcommand{\g}{\gamma}
\newcommand{\R}{\mathbb{R}}
\newcommand{\N}{\mathbb{N}}
\newcommand{\Z}{\mathbb{Z}}
\newcommand{\Id}{{\rm Id}\,}
\def\d{\partial}
\title{\Large{\bfseries{\textsc{Weak observability estimates for $1$-D wave equations \\
with rough coefficients}}}}
\author{ \textsl{Francesco Fanelli}$\,^1$ $\quad$ and $\quad$ \textsl{Enrique Zuazua}$\,^{1,2}$  \vspace{.3cm} \\
{\small $\,^1\,$ \textsc{BCAM - Basque Center for Applied Mathematics}} \\
{\footnotesize Alameda de Mazarredo, 14 -- E48009 Bilbao - Basque Country, SPAIN}  \vspace{.2cm} \\
{\small $\,^2\,$ \textsc{Ikerbasque - Basque Foundation for Science}} \\
{\footnotesize Alameda Urquijo, 36-5, Plaza Bizkaia -- E48011 Bilbao - Basque Country, SPAIN}  \vspace{.2cm} \\
{\footnotesize\ttfamily{ffanelli@bcamath.org $\;$ -- $\;$ zuazua@bcamath.org}} \vspace{0.3cm} }
\date\today
\begin{document}

\maketitle

\subsubsection*{Abstract}
{\small In this paper we prove observability estimates for  $1$-dimensional wave equations with non-Lipschitz coefficients.
For coefficients in  the Zygmund class we prove a ``classical'' observability estimate, which extends the well-known observability results in the energy space for $BV$ regularity.
When the coefficients are instead log-Lipschitz or log-Zygmund, we prove observability estimates ``with loss of derivatives'': in order to estimate the total energy of the solutions, we need measurements on some higher order Sobolev norms at the boundary. This last result represents the intermediate step between
the Lipschitz (or Zygmund) case, when observability estimates hold in the energy space, and the H\"older one, when they fail at any finite order (as proved in \cite{Castro-Z}) due to an infinite loss of derivatives.
We  also establish a sharp relation between the modulus of continuity of the coefficients and the loss of derivatives in the observability estimates. In particular, we will show that under any condition which is weaker than the log-Lipschitz one (not only H\"older, for instance), observability estimates fail in general, while in the intermediate instance between the Lipschitz and the log-Lipschitz ones they can hold only admitting a loss of a finite number of derivatives. This classification has an exact counterpart when considering also the second variation of the coefficients. }


\section{Introduction and main results} \label{s:intro-results}

\subsection{Motivations}

The present paper is devoted to prove boundary observability estimates for the $1$-D wave equation with non-Lipschitz coefficients. 

On the one hand, for the first time we will
look not only at the first, but also at the second variation of the coefficients. Besides, we will consider Zygmund type regularity conditions.

On the other hand, the results we will get are twofold: we will prove not only ``classical'' observability estimates in the natural energy spaces, but also (under precise hypotheses) observability estimates
``with loss of derivatives'', which are, to our knowledge, new in this context.

\medbreak
Let us recall that boundary (or internal) observability estimates are equivalent to the controllability property of the system under the action of a control on the boundary (or in the interior respectively) of the domain. 

Our analysis is limited to coefficients just depending on the space variable $x$. The general case of coefficients depending both on $t$ and $x$ is much more delicate and it will be matter of future studies.

\medbreak
So, let $\Omega=]0,1[$ and $T>0$ (possibly $T=+\infty$) and consider the $1$-D wave equation
$$
\left\{\begin{array}{ll}
        \rho(x)\,\d_t^2u\,-\,\d_x\bigl(a(x)\,\d_xu\bigr)\,=\,0 & \quad\mbox{ in }\;\Omega\times\,]0,T[ \\[1ex]
        u(t,0)\,=\,u(t,1)\,=\,0 & \quad\mbox{ in }\;]0,T[ \\[1ex]
        u(0,x)\,=\,u_0(x)\,,\quad \d_tu(0,x)\,=\,u_1(x) & \quad\mbox{ in }\;\Omega\,,
       \end{array} 
\right. \leqno(W\!E)
$$
where the coefficients satisfy boundedness and strictly hyperbolicity conditions:
$$
0\,<\,\rho_*\,\leq\,\rho(x)\,\leq\,\rho^*\;,\qquad0\,<\,a_*\,\leq\,a(x)\,\leq\,a^*\,. \leqno(H)
$$

Under these hypotheses, system $(W\!E)$ is well-posed in $H^1_0 (\Omega) \times L^2(\Omega)$: for any initial data $(u_0,u_1)\in H^1_0(\Omega)\times L^2(\Omega)$, there exists
a unique $u\in\mc{C}([0,T];H^1_0(\Omega))\cap\mc{C}^1([0,T];L^2(\Omega))$ solution to $(W\!E)$. In addition, its energy
$$
E(t)\,:=\,\frac{1}{2}\,\int_\Omega\biggl(\rho(x)\,\left|u_t(t,x)\right|^2\,+\,a(x)\,\left|u_x(t,x)\right|^2\biggr)\,dx
$$
is preserved during the evolution in time: $E(t)\equiv E(0)$ in $[0,T]$.

Moreover, if the coefficients are smooth (say e.g. Lipschitz continuous) the following observability properties hold true with $T^*:=\left\|\sqrt{\rho/a}\right\|_{L^1}$:
\begin{itemize}
\item \emph{Internal observability}: for any $\Theta:=\,]\ell_1,\ell_2[\,\subset\Omega$ and any $T>2T^*\max\{\ell_1,1-\ell_2\}$, there exists a constant $C_i>0$ such that
\begin{equation} \label{intro-est:int_obs}
E(0)\,\leq\,C_i\,\int^T_0\int^{\ell_2}_{\ell_1}\biggl(\rho(x)\,\left|u_t(t,x)\right|^2\,+\,a(x)\,\left|u_x(t,x)\right|^2\biggr)\,dx\,dt\,;
\end{equation}
\item \emph{Boundary observability}: for any $T>T^*$, there exists a constant $C_b>0$ such that
\begin{equation} \label{intro-est:bound_obs}
E(0)\,\leq\,C_b\,\int^T_0\biggl(\left|u_x(t,0)\right|^2\,+\,\left|u_x(t,1)\right|^2\biggr)\,dt
\end{equation}
($T>2T^*$ if, in the right-hand side, one considers just the observation at $x=0$ or $x=1$).
\end{itemize}
Let us stress the fact that the conditions on the observability time $T$ are necessary: due to the finite propagation speed for $(W\!E)$, no observability estimate can hold true if $T$ is
too small (see e.g. \cite{JLLions}). The constants in the previous inequalities can depend on $T$ and on $\ell_1$, $\ell_2$, but are independent of the solution.

The previous estimates can be proved by means of a genuinely $1$-dimensional technique, i.e. the \emph{sidewise energy estimates}, where the role of time and space are interchanged
(see e.g. \cite{Cox-Z}).

In higher space dimensions the problem is much more complex and other techniques are required. In \cite{B-L-R} Bardos, Lebeau and Rauch (see also  \cite{Burq})
 proved that a necessary and sufficient condition for observability estimates to hold for wave equations with smooth coefficients  is that the observability region $\Theta\subset\oline{\Omega}$ satisfy the so-called ``Geometric Control Condition'' (GCC for brevity). Roughly speaking, this means that $\Theta$ has to absorbe all the rays of geometric optics in time $T$. The necessity of this property can be seen by constructing the so called gaussian beam solutions. 

When coefficients of the wave equation under consideration are constant the rays are straight lines but for variable  coefficients  rays may have a complex dynamics, and concentration phenomena may occur making the GCC  harder to be verified. In the $1$-D case, instead, the situation is easier since rays can only travel in a sidewise manner. 

The issue is widely open for wave equations with low regularity coefficients. In this paper we address the $1-d$ case rather exhaustively. As we shall show, the lack of regularity of coefficients may cause a loss of derivatives of solutions along propagation, thus leading to weaker observability estimates or even to its failure.

\medbreak
For system $(W\!E)$, observability estimates \eqref{intro-est:int_obs}, \eqref{intro-est:bound_obs} in the energy space are well-known to be  verified when the coefficients are in the $BV$ class (see \cite{FC-Z}). In this paper we analyze  this issue for less regular coefficients.
Note however that, in \cite{Castro-Z},  the $BV$ assumption on the coefficients was shown to be  somehow sharp, proving that both interior and boundary observability estimates may fail for H\"older continuous coefficients, due to an infinite loss of derivatives (see also Theorem \ref{th:LL-LZ} below for the case of finite loss). Resorting to the original ideas of the work \cite{C-S} by Colombini and Spagnolo (which actually go back to paper \cite{C-DG-S}), in \cite{Castro-Z} an explicit counterexample was constructed for which such a phenomenon could be observed. The fundamental issue was making the coefficients oscillate more and more approaching to the extreme values $x=0$ and $x=1$ to build solutions exponentially concentrated far from $\partial\Omega$. Therefore,
it is impossible to get observability estimates at the boundary, or in the interior whenever the subinterval $[\ell_1,\ell_2]$ is different from the whole $\Omega$.
A simpler result in that spirit was earlier proved in \cite{A-B-R} showing that \eqref{intro-est:int_obs} and \eqref{intro-est:bound_obs} can fail to be uniform in the context of homogenization, for families of uniformly bounded (above and below) but rapidly oscillating coefficients.

\medbreak
The present paper sets in such a context: it aims to complete, as precisely as possible, the general picture of boundary observability estimates for wave equations with non-regular coefficients
in the $1$-D case.

Before stating our results, let us make some harmless but useful changes, in order to simplify the presentation. First of all, up to extending the coefficients outside the domain under consideration (we will be more clear in section \ref{s:tools} about this point), we can suppose $\rho$ and $a$ to be defined on the whole $\R_x$.

Moreover, in the whole paper, we will deal with $\rho$, $a\,\in\,\mc{A}$, where $\mc{A}$ is a Banach algebra; moreover, thanks to $(H)$, also
$1/\rho$ and $1/a$ will belong to $\mc{A}$ (see also subsection \ref{ss:coeff}). So, up to performing the change of variables
$$
y\,=\,\phi(x)\,:=\,\int^x_0\frac{1}{a(\zeta)}\,d\zeta
$$
and defining the new unknown $\wtilde{u}(y):=u\circ\phi^{-1}(y)$, without loss of generality, it is
 enough to consider the following system:
\begin{equation} \label{eq:we}
\left\{\begin{array}{ll}
        \omega(x)\d_t^2u\,-\,\d^2_xu\,=\,0 & \quad\mbox{ in }\;\Omega\times\,]0,T[ \\[1ex]
        u(t,0)\,=\,u(t,1)\,=\,0 & \quad\mbox{ in }\;]0,T[ \\[1ex]
        u(0,x)\,=\,u_0(x)\,,\quad \d_tu(0,x)\,=\,u_1(x) & \quad\mbox{ in }\;\Omega\,,
       \end{array}
\right.
\end{equation}
where $\omega$ still belongs to $\mc{A}$ and satisfies, for some constants $\omega_*$ and $\omega^*$,
\begin{equation} \label{eq:hyp}
0\,<\,\omega_*\,\leq\,\omega(x)\,\leq\,\omega^*\,.
\end{equation}
Associated to the coefficient  $\omega$, let us also define the time
\begin{equation} \label{def:vel}
T_\omega\,:=\,\int_\Omega\sqrt{\omega(x)}\,dx\,.
\end{equation}
With the terminology we will introduce in section \ref{s:proof}, by Theorem
 4 of \cite{C-DG-S} $T_\omega$ is the maximal displacement of a sidewise wave in the domain $\Omega$.


Finally, in proving our results we will work with smooth data and solutions of equation \eqref{eq:we}.
Recovering the corresponding results for ``critical regularity'' initial data follows then by standard density argument, thanks to a priori bounds depending just on the relevant norms.

\subsection{Main results}

Now we are ready for stating our main results.


First of all, let us assume $\omega$ to fulfill an integral Zygmund assumption (see also Subection \ref{ss:coeff}) on $\Omega$: there exists a constant $K>0$ such that, for all $0<h<1/2$,
\begin{equation} 
\int_{h}^{1-h}\left|\omega(x+h)\,+\,\omega(x-h)\,-\,2\,\omega(x)\right|\,dx\;\leq\;K\,h\,. \label{est:Z_1}
\end{equation}
We define also $|\omega|_{Z}$ as the infimum of the constants $K$ for which \eqref{est:Z_1} holds true.

\begin{thm} \label{th:Z}
Let us consider the strictly hyperbolic problem \eqref{eq:we}-\eqref{eq:hyp}, with $T>2T_\omega$ and $\omega$ satisfying relation \eqref{est:Z_1}. Assume the inital data $(u_0,u_1)\in H^1_0(\Omega)\times L^2(\Omega)$.

Then, there exists a constant $C$ (depending only on $T$, $\omega_*$, $\omega^*$ and $|\omega|_Z$) such that
\begin{equation} \label{est:obs_Z}
\left\|u_0\right\|^2_{H^1_0(\Omega)}\,+\,\left\|u_1\right\|^2_{L^2(\Omega)}\,\leq\,C\,\int^T_0\left|\d_xu(t,0)\right|^2\,dt\,.
\end{equation}
\end{thm}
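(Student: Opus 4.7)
The natural tool in one space dimension is the \emph{sidewise energy} method: one interchanges the roles of $x$ and $t$ in \eqref{eq:we}, reading it as the evolution $\partial_x^2 u = \omega(x)\partial_t^2 u$ in $x$. For a smooth coefficient, I would first introduce
$$F(x) := \frac{1}{2}\int_0^T \Bigl(|\partial_x u(t,x)|^2 + \omega(x)|\partial_t u(t,x)|^2\Bigr)\,dt,$$
and compute, after an integration by parts in $t$ and use of the equation,
$$F'(x) = \frac{\omega'(x)}{2}\int_0^T |\partial_t u|^2\,dt + \bigl[\omega\,\partial_t u\,\partial_x u\bigr]_{t=0}^{t=T}.$$
Since $u(t,0)=0$ forces $\partial_t u(t,0)=0$, one has $F(0)=\tfrac12\int_0^T |\partial_x u(t,0)|^2\,dt$, exactly the observed quantity. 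The assumption $T>2T_\omega$ is what will allow, via the sidewise-propagation principle of Colombini--De Giorgi--Spagnolo (cf.\ \eqref{def:vel}), the boundary contributions at $t=0,T$ to be discarded via a cutoff / compactness reduction, leaving a pointwise estimate $F(x)\lesssim F(0)$ for $x\in[0,1]$; integration of this in $x$ reconstructs $\|u_0\|_{H^1_0}^2+\|u_1\|_{L^2}^2$.

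Because $\omega$ is only integrally Zygmund \eqref{est:Z_1}, the derivative $\omega'$ is not a bounded function (nor even a bounded measure), and the naive Gronwall argument on $F'$ does not close. To go around this I would split $u$ in frequency with respect to $t$: let $\Delta_\nu$ be a dyadic Littlewood--Paley projector in the $t$-variable, so $u=\sum_\nu \Delta_\nu u$. Since $\omega=\omega(x)$ is independent of $t$, the multiplier $\Delta_\nu$ commutes with the equation and each block $\Delta_\nu u$ is still a solution. At the dyadic scale $\nu$ I would then replace $\omega$ by a mollification $\omega_{\varepsilon_\nu}$ at the frequency-dependent scale $\varepsilon_\nu\sim 2^{-\nu}$, and associate a truncated sidewise energy $F_\nu(x)$ to $\Delta_\nu u$. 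Condition \eqref{est:Z_1} provides the two structural bounds
$$\|\omega_{\varepsilon_\nu}-\omega\|_{L^1}\lesssim |\omega|_Z\,\varepsilon_\nu,\qquad \|\omega_{\varepsilon_\nu}''\|_{L^1}\lesssim |\omega|_Z\,\varepsilon_\nu^{-1},$$
which, combined with the spectral localization $\|\partial_t^2\Delta_\nu u\|\lesssim 2^{2\nu}\|\Delta_\nu u\|$, keep the mismatch between the true and the regularized coefficient at $O(1)$ per dyadic block.

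The delicate step of the proof is then to bound $F_\nu'$ in such a way that the resulting Gronwall-type constant is uniform in $\nu$. The problematic term, $\int \omega_{\varepsilon_\nu}'(x)\int_0^T|\partial_t\Delta_\nu u|^2\,dt\,dx$, carries a logarithmically growing factor $\|\omega_{\varepsilon_\nu}'\|_{L^\infty}$; I would integrate it by parts in $x$, transferring the derivative onto $\int|\partial_t\Delta_\nu u|^2\,dt$, and then re-express the emerging $\partial_x\partial_t\Delta_\nu u$ via the equation, producing an integrand controlled by the integral bound on $\omega_{\varepsilon_\nu}''$. Summing the resulting uniform-in-$\nu$ dyadic estimates $F_\nu(x)\lesssim F_\nu(0)$ in $\ell^2$ and invoking the spectral equivalence between the $\partial_t$-weighted norms of the blocks and the Sobolev norms of $u$ recovers \eqref{est:obs_Z}.

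The principal obstacle is the calibration between the frequency at which one projects and the scale at which one mollifies: the choice $\varepsilon_\nu=2^{-\nu}$ is precisely tuned so that the integral Zygmund hypothesis compensates the bad growth of $\|\omega_{\varepsilon_\nu}'\|_{L^\infty}$ when the correction term is integrated by parts, yielding no net loss of derivatives. Any strictly weaker modulus of continuity breaks this balance, and the argument then only yields an estimate with a finite (or infinite) loss, consistently with Theorem~\ref{th:LL-LZ} and the counterexamples recalled in the introduction.
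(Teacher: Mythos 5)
Your outer scaffolding is the same as the paper's: read \eqref{eq:we} sidewise, use $u(t,0)=0$ to identify the energy at $x=0$ with the observed quantity $\int_0^T|\partial_x u(t,0)|^2dt$, dispose of the $t=0,T$ boundary contributions through finite propagation speed (the paper does this by shrinking the $t$-interval to $[T_\omega x,\,T-T_\omega x]$ in the cone energies \eqref{eq:sid_en}, with $T_\omega$ from \eqref{def:vel}), and finally integrate in $x$ and use conservation of the usual energy (Lemma \ref{l:en_E}) together with $T>2T_\omega$ to recover $\|u_0\|^2_{H^1_0}+\|u_1\|^2_{L^2}$. The crucial difference is that the paper does \emph{not} reprove the no-loss energy estimate for Zygmund coefficients: it invokes Proposition \ref{p:Z-LZ}-(i) (Tarama's estimate \eqref{est:en_Z}) as a black box for the sidewise operator, and Theorem \ref{th:Z} then follows in a few lines. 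You instead try to establish that estimate yourself, and it is precisely there that your argument has a genuine gap.

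Concretely, your treatment of the dangerous term $\int\omega_{\varepsilon_\nu}'(x)\int_0^T|\partial_t\Delta_\nu u|^2\,dt\,dx$ does not close. Integrating by parts in $x$ transfers the derivative onto $G_\nu(x):=\int|\partial_t\Delta_\nu u|^2dt$, and $G_\nu'(x)=2\int\partial_t\Delta_\nu u\,\partial_x\partial_t\Delta_\nu u\,dt$; the equation $\partial_x^2u=\omega\,\partial_t^2u$ gives no expression for the mixed derivative $\partial_x\partial_t\Delta_\nu u$, and no $\omega_{\varepsilon_\nu}''$ is produced by this manipulation, so the claimed control ``by the integral bound on $\omega_{\varepsilon_\nu}''$'' is unsubstantiated. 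Moreover, once you integrate in $x$ you lose the pointwise-in-$x$ differential inequality needed to run Gronwall and obtain $F_\nu(x)\lesssim F_\nu(0)$. What actually makes the Zygmund case work in the proof behind \eqref{est:en_Z} is a \emph{modified} energy containing a corrector term proportional to $\omega_{\varepsilon_\nu}'/\omega_{\varepsilon_\nu}$ multiplied by a product of the solution and its first derivatives, designed so that all contributions involving $\omega_{\varepsilon_\nu}'$ cancel when differentiating the energy, leaving only terms controlled by $\|\omega_{\varepsilon_\nu}-\omega\|_{L^1}\lesssim|\omega|_Z\,2^{-\nu}$ and $\|\omega_{\varepsilon_\nu}''\|_{L^1}\lesssim|\omega|_Z\,2^{\nu}$ (these two bounds of yours are correct); that cancellation mechanism is the missing idea. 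Two further points: under the integral hypothesis \eqref{est:Z_1} one only gets $\|\omega_{\varepsilon_\nu}'\|_{L^1}\lesssim\nu$, not the $L^\infty$ bound you invoke; and your ``cutoff/compactness'' disposal of the $t=0,T$ boundary terms sits uneasily with Littlewood--Paley blocks, which are nonlocal in $t$ --- this must be reconciled, as the paper does, via the finite propagation speed of \cite{C-DG-S}. The simplest repair is to do what the paper does: apply the ready-made estimate \eqref{est:en_Z} sidewise; otherwise you must supply the corrected-energy argument in full.
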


Let us make some useful comments on the previous statement.

\begin{rem} \label{r:Z}
\begin{itemize} 
\item[(i)] Under the hypothesis of Theorem \ref{th:Z}, for initial data $(u_0,u_1)$ in the energy space $H^1_0\times L^2$ there exists a unique global in time solution $u$ to \eqref{eq:we} such that
$$
u\,\in\,L^\infty([0,T];H^1_0(\Omega))\,\cap\,W^{1,\infty}([0,T];L^2(\Omega))
$$
(see for instance \cite{H-S}). In particular, $\d_t^2u\in W^{-1,\infty}([0,T];L^2(\Omega))$ and,
as $\omega\in L^\infty(\Omega)$,
$$
\d_x^2u\,=\,\omega\,\d_t^2u\;\in\;W^{-1,\infty}([0,T];L^2(\Omega))\,.
$$
Therefore, the trace of $\d_xu$ at any point $x$, and in particular at $x=0$ is well-defined. Moreover, by ``reversing'' inequality \eqref{est:obs_Z}, in subsection \ref{ss:E} we will show that its right-hand side is finite, and all its terms have sense. 

\item[(ii)] Let us note that condition \eqref{est:Z_1} is weaker than the $W^{1,1}$ one (see also Corollary \ref{c:Z-BV} and Example \ref{ex:Z} below). Therefore,  Theorem \ref{th:Z} improves the previous result for $BV$ coefficients (see paper \cite{FC-Z}).

The fact is that the $BV$ hypothesis on the coefficients  is sharp if one just considers their first variation. A condition like  \eqref{est:Z} is related (for regular functions) with their second derivative. Thus our assumptions allow to consider larger classes of coefficients.

In Section \ref{s:sharp} we will give motivations in order
to support the strenght of the previous result.
\end{itemize}
\end{rem}

Let us now deal with lower regularity coefficients, and consider integral log-Lipschitz and log-Zygmund conditions. These assumptions read, respectively, as follows: there exists a constant $K>0$ such that, for all $0<h<1/2$,
\begin{eqnarray}
\int_0^{1-h}\left|\omega(x+h)\,-\,\omega(x)\right|\,dx & \leq & K\,h\,\log\left(1+\frac{1}{h}\right) \label{est:LL_1} \\
\int_h^{1-h}\left|\omega(x+h)\,+\,\omega(x-h)\,-\,2\,\omega(x)\right|\,dx & \leq & K\,h\,\log\left(1+\frac{1}{h}\right)\,. \label{est:LZ_1}
\end{eqnarray}
We also set $|\omega|_{LL}$ and $|\omega|_{LZ}$ as the infimum of the constants $K$ for which, respectively, inequalities \eqref{est:LL_1} and \eqref{est:LZ_1} hold true.

Under any of these hypotheses, it is
 possible to show an observability estimate, provided that we allow a loss of a finite number of derivatives. Roughly speaking, in order to control the energy of the initial data, we need to add also the contributions coming from the $L^2$-norms of the time derivatives of the solution at the boundary $x=0$.

For notation convenience, let us introduce the operator
\begin{equation} \label{def:deriv}
D_\omega\,:\quad f\;\longmapsto\;\frac{1}{\omega(x)}\,\d_x^2f\,.
\end{equation}
As usual, we will denote with $D_\omega^k$ the composition $D_\omega\circ\ldots\circ D_\omega$ ($k$ times) for any $k>0$, with the convention $D_\omega^0=\Id$.

\begin{thm} \label{th:LL-LZ}
Let us consider the strictly hyperbolic problem \eqref{eq:we}-\eqref{eq:hyp}, with $T>2T_\omega$. Assume this time $\omega$ to satisfy relation \eqref{est:LL_1} or \eqref{est:LZ_1}.

Then, there exist two positive constants $C$ and $m\in\N$, depending only on  $\omega_*$, $\omega^*$ and $|\omega|_{LL}$ or $|\omega|_{LZ}$ respectively ($C$ also depends on $T$) for which
\begin{equation} \label{est:obs_log}
\left\|u_0\right\|^2_{H^1_0(\Omega)}\,+\,\left\|u_1\right\|^2_{L^2(\Omega)}\,\leq\,C\,\int^T_0\bigl|\d_t^m\d_xu(t,0)\bigr|^2\,dt\,,
\end{equation}
holds true for any initial data $u_0\in H^{2m+1}(\Omega)\cap H^1_0(\Omega)$ and $u_1\in H^{2m}(\Omega)$ such that
\begin{equation} \label{eq:D^m_omega}
D_\omega^m u_0\,\in\,H^1(\Omega)\qquad\mbox{ and }\qquad
D^m_\omega u_1\,\in\,L^2(\Omega)\,.
\end{equation}
\end{thm}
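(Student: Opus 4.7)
The strategy is to extend the sidewise-energy method underlying Theorem \ref{th:Z} to the weaker moduli of continuity \eqref{est:LL_1}--\eqref{est:LZ_1}, at the price of a finite loss of Sobolev derivatives. In the sidewise viewpoint one reads \eqref{eq:we} as $\d_x^2 u=\omega(x)\,\d_t^2 u$: the variable $x$ becomes the ``time'' of a hyperbolic evolution whose coefficient $\omega$ depends on this new ``time''. The modulus of continuity of $\omega$ in $x$ hence directly controls the stability of the sidewise energy: Zygmund regularity is precisely the threshold for an almost-conservation law (cf.~Theorem \ref{th:Z}); in the log-Lipschitz or log-Zygmund regime the Colombini--De Giorgi--Spagnolo/Colombini--M\'etivier phenomenon forces an unavoidable amplification of high-frequency modes which, thanks to $T_\omega<\infty$, translates into a loss of a finite number $m$ of Sobolev derivatives.

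First I would perform a Littlewood--Paley decomposition of $u$ in the variable $t$, $u=\sum_\nu\Delta_\nu u$; since $\omega$ is $t$-independent, each dyadic block $\Delta_\nu u$ still solves \eqref{eq:we}. On each block I would introduce a \emph{weighted} sidewise energy
\[
\mathcal{F}_\nu(x)\;=\;\frac{1}{2}\,e^{-2\beta\nu\,x}\int_0^T\Bigl(|\d_x\Delta_\nu u|^2\,+\,\omega(x)\,|\d_t\Delta_\nu u|^2\Bigr)\,dt,
\]
with $\beta>0$ to be tuned. Differentiating $\mathcal{F}_\nu$ in $x$ and replacing $\d_x^2\Delta_\nu u$ by $\omega\,\d_t^2\Delta_\nu u$, the only problematic term stems from $\d_x\omega$, which is merely a distribution; I would treat it through a Bony paralinearisation of $\omega$ truncated at frequency $2^\nu$, together with Bernstein's inequality in $t$, so that the log-Lipschitz bound \eqref{est:LL_1} (respectively the log-Zygmund bound \eqref{est:LZ_1}, exploiting the symmetric second-difference structure) produces an error of order $C\,\nu\,\mathcal{F}_\nu$, with $C$ depending only on $|\omega|_{LL}$ (resp.~$|\omega|_{LZ}$). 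Choosing $\beta>C$, the error is absorbed by the weight and one obtains $\mathcal{F}_\nu(0)\lesssim\mathcal{F}_\nu(1)$ up to a multiplicative amplification of type $2^{m\nu}$, where $m$ depends only on $\beta$, $\omega_*$, $\omega^*$ and $T_\omega$.

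Summing over $\nu$, the factor $2^{m\nu}$ is exactly a shift by $m$ Sobolev derivatives, so the sidewise estimate translates into a one-sided observability inequality at $x=0$ with $m$ extra $t$-derivatives in the boundary term. The hypothesis $T>2T_\omega$ is used here to allow for one reflection at $x=1$, so that the observation at $x=0$ actually retrieves the full initial datum. Finally, since $\omega$ does not depend on $t$, both $u$ and its $t$-derivatives solve \eqref{eq:we}; converting all traces at $t=0$ through the identity $\d_t^2u=D_\omega u$ turns the higher-order boundary contribution into the stated $\int_0^T|\d_t^m\d_x u(t,0)|^2\,dt$, and the compatibility conditions \eqref{eq:D^m_omega} are exactly what is needed to guarantee that the relevant traces belong to $H^1_0\times L^2$.

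The main obstacle is the paradifferential calibration behind the closure of the weighted energy: with only a logarithmic modulus of continuity of $\omega$ in the sidewise ``time'' $x$, the raw Gronwall argument produces a logarithmically divergent error, and one must finely split $\omega=S_\nu\omega+(\omega-S_\nu\omega)$ at the exact dyadic frequency $2^\nu$, bound the remainder $\omega-S_\nu\omega$ via \eqref{est:LL_1} or \eqref{est:LZ_1}, and control the commutator $[\Delta_\nu,S_\nu\omega\cdot\d_t^2]$ so that its contribution is reabsorbed by $\beta\nu\,\mathcal{F}_\nu$. A secondary technical point is the handling of the time-endpoint traces produced by the integration by parts in $t$ inside $\frac{d}{dx}\mathcal{F}_\nu$: these must be disposed of by an appropriate time cut-off, whose existence is precisely what the condition $T>2T_\omega$ makes possible.
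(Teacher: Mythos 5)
Your overall skeleton (sidewise energy estimates, with the loss of derivatives coming from the weak modulus of continuity of $\omega$ in the sidewise ``time'' $x$) is the same as the paper's, but where the paper simply quotes the known energy estimates with loss for hyperbolic operators with coefficients depending only on time (Proposition \ref{p:LL}, from Colombini--De Giorgi--Spagnolo, and Proposition \ref{p:Z-LZ}, from Tarama) and then combines them with the conserved higher-order energies $E_k$ of Lemma \ref{l:en_E}, you attempt to re-derive those estimates by a weighted Littlewood--Paley/Gronwall argument. For the log-Lipschitz hypothesis \eqref{est:LL_1} your scheme (truncate $\omega$ at frequency $2^\nu$, absorb the error by the weight $e^{-2\beta\nu x}$) is essentially the classical proof and can be made to work. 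The genuine gap is the log-Zygmund case: the hypothesis \eqref{est:LZ_1} controls only \emph{second} differences, so the split $\omega=S_\nu\omega+(\omega-S_\nu\omega)$ does not produce an error of order $C\,\nu\,\mathcal{F}_\nu$. Indeed $\|\Delta_j\omega\|_{L^\infty}\lesssim 2^{-j}(1+j)$ only gives $\|\d_x S_\nu\omega\|_{L^\infty}\lesssim \nu^2$, and the raw weighted-energy/Gronwall argument then yields an amplification of size $e^{C\nu^2}$, i.e.\ an \emph{infinite} loss of derivatives, whatever the choice of $\beta$. Closing the estimate under a second-difference assumption requires Tarama's corrected energy (a lower-order correction coupling $v$ and $\d v$ through $\omega_\veps'/\omega_\veps$, where $\omega_\veps$ is a mollification of $\omega$ at the frequency-dependent scale), which is precisely the content of Proposition \ref{p:Z-LZ}; your sketch never introduces this device, and without it the log-Zygmund half of the theorem does not follow.

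A second concrete gap concerns the form of the right-hand side. Even granting the weighted estimate, what your argument delivers is a bound of the energy by $\bigl\|\d_xu(\,\cdot\,,0)\bigr\|^2_{H^\beta(0,T)}$, or after passing to integer orders by the sum $\int_0^T\sum_{k\leq m}\bigl|\d_t^k\d_xu(t,0)\bigr|^2dt$ as in \eqref{est:obs_partial} and \eqref{est:obs_log-b}; the statement \eqref{est:obs_log} keeps only the top term $\d_t^m\d_xu(t,0)$, and the lower-order boundary contributions cannot be discarded for free (a trace which is constant in $t$ has vanishing $m$-th derivative but nonzero $L^2$ norm). The paper needs a separate argument for this reduction (Remark \ref{r:hom_sob}: compact support in time via finite propagation speed and the Fourier transform, or a unique-continuation/compactness argument showing that $\d_t^m\d_xu_{|x=0}=0$ forces $u$ to be polynomial in $t$ and hence zero); your ``conversion of traces at $t=0$ through $\d_t^2u=D_\omega u$'' only addresses the finiteness of the observation term under \eqref{eq:D^m_omega} (the paper's subsection \ref{ss:E}), not this step. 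Finally, a smaller inaccuracy: $T>2T_\omega$ is not used through ``a reflection at $x=1$'', but through finite sidewise propagation speed --- the sidewise energy at station $x$ only controls the shrinking time interval $[T_\omega x,\,T-T_\omega x]$, and integrating in $x$ together with the conservation of $E_0$ produces the factor $(T-2T_\omega)\,E_0(0)$ on the left-hand side.
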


Some remarks are in order.

\begin{rem} \label{r:LL-LZ}
\begin{itemize}
\item[(i)] As  in Remark \ref{r:Z}-(i),  the right-hand side of \eqref{est:obs_log} is well-defined and finite, under our hypotheses. However, as the proof is much more involved, we will clarify it in detail in subsection \ref{ss:E}.

\item[(ii)] As we will see in Theorem \ref{th:L-LL}, the loss of a finite number of derivatives in the observability inequality cannot be avoided, i.e. we do have $m>0$.

\item[(iii)] Note that the conditions we impose on the higher order derivatives $D_\omega^mu_0$ and $D_\omega^mu_1$ are very strong. In particular they imply that $D_\omega^ku_0, D_\omega^ku_1\,\in H^2(\Omega)$ for any $0\leq k\leq m-1$.
However, they are necessary, due to the weak modulus of continuity of $\omega$. For instance, let us consider the first case for which they are not trivial, i.e. $m=2$. So, our hypotehses imlpy $u_0\in H^5(\Omega)$, and then $\d_x^2u_0\in H^3$. Nevertheless, from this we cannot infer $D_\omega u_0$ to belong to the same space (hence, a fortiori, neither that $D_\omega^2u_0\in H^1$), due to the rough regularity of $\omega$.
So, having smooth initial data doesn't help to propagate their regularity in the equation.

\item[(iv)] Note that $m$ is linked with the quantity $|\omega|_{\mc{A}}/\omega_*$, with $\mc{A}=LL$ or $LZ$ if $\omega$ is log-Lipschitz or log-Zygmund continuous respectively (see also subsection \ref{ss:cauchy} and Remark \ref{r:loss}), but it's independent of $T$. The observability constant, $C$, however, depends on all these quantities.

\item[(v)] We point out here that the loss comes from Propositions \ref{p:LL} and \ref{p:Z-LZ}, where it's given by the index $\beta>0$. In our statement, $m=[\beta]+1$: we prefer to work with an integer loss, to simplify the presentation and for applications to the control problem. However, according to the just mentioned propositions, a more precise inequality would be
\begin{equation} \label{est:obs_log-b}
\left\|u_0\right\|^2_{H^1_0(\Omega)}\,+\,\left\|u_1\right\|^2_{L^2(\Omega)}\,\leq\,C\,\bigl\|\d_xu(\,\cdot\,,0)\bigr\|_{H^\beta(0,T)}^2\,
\end{equation}
for any initial data $u_0\in H^{\wtilde{m}+1}(\Omega)\cap H^1_0(\Omega)$ and $u_1\in H^{\wtilde{m}}(\Omega)$ (where we set $\wtilde{m}=[2\beta]$) such that, if $\wtilde{m}>1$, the following additional conditions are verified:
\begin{itemize}
\item if $\wtilde{m}=2k$, then $D_\omega^ku_0\in H^1(\Omega)$ and $D_\omega^ku_1\in L^2(\Omega)$;
\item if $\wtilde{m}=2k+1$, then $D_\omega^ku_1\in H^1(\Omega)$ and $D_\omega^ku_0\in L^2(\Omega)$;
\end{itemize}
Note that, for $\wtilde{m}=1$, then the hypothesis on the initial data are enough to recover $u_1\in H^1(\Omega)$ and $D_\omega u_0\in L^2(\Omega)$, so that no further requirements are needed.
\end{itemize}
\end{rem}

The case of the first variation of the coefficient, i.e. Lipschitz-type conditions, is particularly interesting because we are able to prove the sharpness of our results.
More precisely, we will prove also the following facts (see subsection \ref{ss:first} for the rigorous statements):
\begin{enumerate}
\item any intermediate modulus of continuity between the Lipschitz and the log-Lipschitz ones entails an observability estimate analogous to inequality \eqref{est:obs_log-b}, with an arbitrarly small $\beta>0$, which, however, cannot vanish;

\item any modulus of continuity slightly worse than the log-Lipschitz one always entails an infinite loss of derivatives, so that observability estimates always fail: in particular,
\eqref{est:obs_log} does not hold true, independently of the size of the time $T$ and of the order $m\in\N$.
\end{enumerate}
Together with the results in \cite{FC-Z} and \cite{Castro-Z} we mentioned before, these results complete the general picture about how observability estimates depend on the modulus of continuity of the coefficients.

Let us notice that the last two points will be proved  constructing counterexamples, in the same spirit of the one in paper \cite{Castro-Z} by Castro and Zuazua; they are inspired also by the ones (for the Cauchy problem) of Colombini and Lerner (see \cite{C-L}) for moduli worse than log-Lipschitz, and by Cicognani and Colombini (in \cite{Cic-C}) for moduli between Lipschitz and log-Lipschitz regularity. Hence, we will closely follow the structure of the counterexample in \cite{Castro-Z}: the difficulty is to find the correct oscillation of the coefficients in order to reproduce the same energy concentration phenomenon. We will be more precise in subsection \ref{ss:first}, giving the exact statements and all the details of the construction.

Let us point out that, from the previous characterization, one gathers an analogue counterpart for Zygmund type conditions. Nevertheless, explicit counterexamples in this instance (without resorting to the ones established for the first variation) are still far to be found.

\medbreak
The rest of the paper is organized in the following way.

In the next section we will collect some useful properties we will need in our study. In particular, in a first time we will analyse the functional spaces we are dealing with and recall some of their
basic properties. Then, we will mention some well-known results about the Cauchy problem for second order strictly hyperbolic operators with low regularity coefficients. As a matter of fact,
due to the sidewise energy estimates method, our statements will strongly rely on energy estimates for such a kind of operators.

In section \ref{s:proof}, we will prove Theorems \ref{th:Z} and \ref{th:LL-LZ}. The main ingredient will be the sidewise energy estimates, which will make use, in a crucial way, of the just established energy estimates. Note that, even if stated in the case of the whole $\R$, we will be able to use them for a bounded domain thanks to the finite propagation speed issue.

 In section \ref{s:sharp} we will discuss the optimality of our results. In the case of Zygmund type conditions, in a first time we will give some ``empirical'' considerations in favour of it; then, we will formally derive it from the case of the first variation. So, in this instance we will construct the couterexamples we announced before, estblishing in this way a sharp relation between the modulus of continuity of $\omega$ and the loss of derivatives in the observability estimates.

In section \ref{s:control} we will present the application of our main results to the problem of null controllability for the wave equation.

Finally, in section \ref{s:further} we will discuss some other closely related issues.

\subsubsection*{Acknowledgements}

The authors were supported by Grant MTM2011-29306-C02-00, MICINN, Spain, ERC Advanced Grant FP7-246775 NUMERIWAVES, ESF Research Networking Programme OPTPDE and Grant PI2010-04 of the Basque Government.

\section{Tools} \label{s:tools}

The present section is devoted to present the tools and preliminary results we will need in order to prove our statements.

Hence, in a first time, we will introduce the Littlewood-Paley decomposition and the definition of Besov spaces that will be used in the next subsection, in order to analyse the classes of coefficients we deal with, and to state some of their properties. There, we will also focus on Zygmund type conditions in more detail. In particular, we will compare the ``integral Zygmund'' condition and the $BV$ classes.

Finally, we will recall some basic results on the Cauchy problem, and in particular on energy estimates, for a second order strictly hyperbolic operator with non-Lipschitz coefficients. They will be fundamental in the sequel, especially in the application of the sidewise energy estimates method.

\subsection{An overview on Littlewood-Paley theory} \label{ss:LP}

We introduce here the Littlewood-Paley decomposition and, by use of it,  Besov spaces. We will focus on their basic properties, and the definition of Bony's paraproduct decomposition.

This will be very useful to describe the sets of coefficients we are handling, to justify all the computations we made in the introduction in order to reduce $(W\!E)$ to \eqref{eq:we} and also to explain Remark \ref{r:LL-LZ}-(ii).
For the sake of completeness, we have to mention that Littlewood-Paley theory is also the basis to prove all the results we are going to quote in subsection \ref{ss:cauchy}.
Finally, we point out here that dyadic decomposition has been recently used to deal with multi-dimensional observability problems (see for instance papers \cite{D-L} and \cite{D-E}).

For a complete and detailed description of the Littlewood-Paley theory and
of paradifferential calculus, we refer to \cite{B-C-D}, chapter 2 (see also \cite{Bony} and \cite{M-2008}).

\medbreak
Let us first define the so called ``Littlewood-Paley decomposition'', based on a non-homogeneous dyadic partition of unity with
respect to the Fourier variable.
So, fix a smooth radial function
$\chi$ supported in (say) the ball $B(0,4/3),$ 
equal to $1$ in a neighborhood of $B(0,3/4)$
and such that $r\mapsto\chi(r\,e)$ is nonincreasing
over $\R_+$ for all unitary vector $e\in\R^N$. Moreover, set
$\varphi\left(\xi\right)=\chi\left(\xi/2\right)-\chi\left(\xi\right).$
\smallbreak
The dyadic blocks $(\Delta_j)_{j\in\Z}$
 are defined by\footnote{Throughout we agree  that  $f(D)$ stands for 
the pseudo-differential operator $u\mapsto\mc{F}^{-1}(f\,\mc{F}u)$.} 
$$
\Delta_j:=0\ \hbox{ if }\ j\leq-2,\quad\Delta_{-1}:=\chi(D)\quad\hbox{and}\quad
\Delta_j:=\varphi(2^{-j}D)\ \text{ if }\  j\geq0.
$$
We  also introduce the following low frequency cut-off operator:
$$
S_ju:=\chi(2^{-j}D)=\sum_{k\leq j-1}\Delta_{k}\quad\text{for}\quad j\geq0.
$$
The following fundamental properties hold true:
\begin{itemize}
\item for any $u\in\mc{S}',$ the equality $u=\sum_{j}\Delta_ju$ holds true in $\mc{S}'$;
\item for all $u$ and $v$ in $\mc{S}'$,
the sequence $\left(S_{j-1}u\,\,\Delta_jv\right)_{j\in\N}$ is spectrally supported in dyadic annuli,
where the size of the frequencies is proportional to $2^j$.
\end{itemize}

Before going on, let us mention a fundamental result, which explains, by the so-called \emph{Bernstein's inequalities},
the way derivatives act on spectrally localized functions.
  \begin{lemma} \label{l:bern}
Let  $0<r<R$.   A
constant $C$ exists so that, for any nonnegative integer $k$, any couple $(p,q)$ 
in $[1,+\infty]^2$ with  $p\leq q$ 
and any function $u\in L^p$,  we  have, for all $\lambda>0$,
$$
\displaylines{
{\rm supp}\, \widehat u \subset   B(0,\lambda R)\quad
\Longrightarrow\quad
\|\nabla^k u\|_{L^q}\, \leq\,
 C^{k+1}\,\lambda^{k+N\left(\frac{1}{p}-\frac{1}{q}\right)}\,\|u\|_{L^p}\;;\cr
{\rm supp}\, \widehat u \subset \{\xi\in\R^N\,|\, r\lambda\leq|\xi|\leq R\lambda\}
\quad\Longrightarrow\quad C^{-k-1}\,\lambda^k\|u\|_{L^p}\,
\leq\,
\|\nabla^k u\|_{L^p}\,
\leq\,
C^{k+1} \, \lambda^k\|u\|_{L^p}\,.
}$$
\end{lemma}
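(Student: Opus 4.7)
The plan is to reduce to the dilation-normalized case $\lambda = 1$ and then combine two ingredients: Young's inequality with a carefully chosen Schwartz convolution kernel for the upper bound, and construction of a ``Fourier multiplier inverse'' of $\nabla^k$ acting on spectrally annular functions for the lower bound.

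First I would reduce to $\lambda = 1$ by the standard scaling $v(x) := u(x/\lambda)$. A direct computation gives $\widehat v(\xi) = \lambda^N \widehat u(\lambda \xi)$, so if ${\rm supp}\,\widehat u \subset B(0,\lambda R)$ (resp. the annulus $\{r\lambda \le |\xi| \le R\lambda\}$), then ${\rm supp}\,\widehat v \subset B(0,R)$ (resp. $\{r \le |\xi| \le R\}$). Using $\|v\|_{L^p} = \lambda^{N/p}\|u\|_{L^p}$ and $\|\nabla^k v\|_{L^q} = \lambda^{-k+N/q}\|\nabla^k u\|_{L^q}$, the claimed power $\lambda^{k+N(1/p-1/q)}$ in the first inequality (and $\lambda^k$ in the second) pops out of the $\lambda=1$ estimate automatically.

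For the upper bound I would fix once and for all a radial $\phi \in C_c^\infty(\R^N)$ that equals $1$ on $B(0,2R)$ (covering both the ball and the annulus case) and is supported in some $B(0,R')$. Since $\widehat v = \phi\,\widehat v$, we have $v = \check\phi * v$ and hence $\partial^\alpha v = (\partial^\alpha\check\phi)*v$. Young's inequality with $1 + 1/q = 1/p + 1/r$ gives
$$
\|\partial^\alpha v\|_{L^q} \,\le\, \|\partial^\alpha\check\phi\|_{L^r}\,\|v\|_{L^p}\,.
$$
The task reduces to proving $\|\partial^\alpha\check\phi\|_{L^r} \le C^{|\alpha|+1}$ uniformly in $r\in[1,\infty]$. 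This follows from the identity $(1+|x|^2)^M\,\partial^\alpha\check\phi(x) = \mathcal F^{-1}\bigl[(1-\Delta_\xi)^M((i\xi)^\alpha\phi)\bigr](x)$: expanding by Leibniz, the $\xi$-derivatives of $(i\xi)^\alpha$ are polynomials of degree at most $|\alpha|$, supported in ${\rm supp}\,\phi$, and on this compact set $|\xi|\le R'$, so a geometric bound in $|\alpha|$ drops out once $M>N$ is fixed. Summing over $|\alpha|=k$ absorbs the binomial factors into the constant and yields the first estimate.

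For the lower bound I would pick $\psi \in C_c^\infty(\R^N\setminus\{0\})$ with ${\rm supp}\,\psi \subset \{r/2 \le |\xi| \le 2R\}$ and $\psi \equiv 1$ on $\{r\le|\xi|\le R\}$, so $\widehat v = \psi\,\widehat v$. Using the algebraic identity $|\xi|^{2k} = \sum_{|\alpha|=k}\binom{k}{\alpha}\xi^{2\alpha}$, define
$$
g_\alpha(\xi) \,:=\, \binom{k}{\alpha}\,\frac{\xi^\alpha\,\psi(\xi)}{|\xi|^{2k}}\,\in\,C_c^\infty(\R^N)\,,
$$
which is legitimate because $\psi$ vanishes near the origin. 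A direct Fourier-side computation then yields the representation
$$
v \,=\, i^{-k}\sum_{|\alpha|=k}\check g_\alpha * \partial^\alpha v\,,
$$
and Young's inequality with exponent $1$ gives $\|v\|_{L^p} \le \sum_{|\alpha|=k}\|\check g_\alpha\|_{L^1}\|\partial^\alpha v\|_{L^p}$. The same Fourier integration-by-parts bookkeeping as above shows $\sum_\alpha\|\check g_\alpha\|_{L^1}\le C^{k+1}$, taking into account that the weight $\xi^\alpha/|\xi|^{2k}$ and its derivatives, restricted to the fixed compact annulus where $\psi$ lives, contribute only polynomial factors in $k$.

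The main obstacle is precisely this bookkeeping: proving that all the implicit constants grow only geometrically in $k$, rather than factorially, which is what makes the inequalities useful for Besov-norm estimates later on. Everything else is Plancherel-style algebra and Young's inequality, and I would expect the argument to go through routinely once the counting estimates are in place.
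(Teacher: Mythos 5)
Your argument is correct: the scaling reduction, the convolution identity $v=\check\phi * v$ with Young's inequality for the direct estimate, and the inverse multiplier $\binom{k}{\alpha}\xi^\alpha\psi(\xi)/|\xi|^{2k}$ built from $|\xi|^{2k}=\sum_{|\alpha|=k}\binom{k}{\alpha}\xi^{2\alpha}$ for the reverse one, together with the weight-$(1+|x|^2)^M$ bookkeeping that keeps the constants geometric in $k$, is exactly the classical proof. The paper does not prove this lemma at all but simply quotes it from \cite{B-C-D}, Chapter 2, where the argument given is essentially the one you wrote, so there is nothing to correct or compare further.
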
   

One can now define what a (non-homogeneous) Besov space $B^s_{p,r}$ is.
\begin{defin} \label{d:besov}
  Let  $u$ be a tempered distribution, $s$ a real number, and 
$1\leq p,r\leq+\infty.$ We define the space $B^s_{p,r}$ as the set of distributions $u\in\mc{S}'$ such  that
$$
\|u\|_{B^s_{p,r}}:=\bigg\|\bigl(2^{js}\,
\|\Delta_j  u\|_{L^p}\bigr)_{j\geq-1}\bigg\|_{\ell^r}\;<\;+\infty\,.
$$
\end{defin}
  
From the above definition, it is easy to show that for all $s\in\R$, the Besov space $B^s_{2,2}$ coincides with the non-homogeneous
Sobolev space $H^s$.

On the other side, for all $s\in\,\R_+\!\!\setminus\!\N$, the space $B^s_{\infty,\infty}$ is actually the
H\"older space $\mc{C}^s$.
If $s\in\N$, instead, we set $\mc{C}^s_*:=B^s_{\infty,\infty}$, to distinguish it from the space $\mc{C}^s$ of
the differentiable functions with continuous partial derivatives up to the order $s$. Moreover, the strict inclusion
$\mc{C}^s_b\,\hra\,\mc{C}^s_*$ holds, where $\mc{C}^s_b$ denotes the subset of $\mc{C}^s$ functions bounded with all
their derivatives up to the order $s$.
For the sake of completeness, let us recall that, if $s<0$, we define the ``negative H\"older space'' $\mc{C}^s$ as the Besov space $B^s_{\infty,\infty}$.

  Finally, let us also point out that for any $k\in\N$ and $p\in[1,+\infty]$, we have the following chain of continuous embeddings:
 $$
 B^k_{p,1}\hookrightarrow W^{k,p}\hookrightarrow B^k_{p,\infty}\,,
 $$
  where  $W^{k,p}$ denotes the classical Sobolev space of $L^p$ functions
 with all the derivatives up to the order $k$ in $L^p$.

\medbreak
Let us recall now some basic facts about paradifferential calculus, as introduced by J.-M. Bony in \cite{Bony}. Again, one can refer also to \cite{B-C-D} and \cite{M-2008}.

Given two tempered distributions $u$ and $v$, formally one has $u\,v\,=\,\sum_{j,k}\Delta_ju\,\Delta_kv$. Now, due to the
spectral localization of cut-off operators, we can write the following \emph{Bony's decomposition}:
\begin{equation}\label{eq:bony}
u\,v\,=\,T_uv\,+\,T_vu\,+\,R(u,v)\,,
\end{equation}
where we have defined the paraproduct and remainder operators respectively as
$$
T_uv\,:=\,\sum_jS_{j-1}u\,\Delta_jv\qquad\hbox{ and }\qquad
R(u,v)\,:=\,\sum_j\,\sum_{|k-j|\leq1}\Delta_ju\,\Delta_kv\,.
$$

Paraproduct and remainder operators enjoy some continuity properties on the class of non-homogeneous Besov spaces.
\begin{thm}\label{t:op}
\begin{itemize}
\item[(i)] For any $(s,p,r)\in\R\times[1,+\infty]^2$ and $t>0,$ the paraproduct operator 
$T$ maps $L^\infty\times B^s_{p,r}$ in $B^s_{p,r}$,
and  $B^{-t}_{\infty,r_1}\times B^s_{p,r_2}$ in $B^{s-t}_{p,q}$, with $1/q\,:=\,\min\left\{1\,,\,1/r_1\,+\,1/r_2\right\}$.
Moreover, the following estimates hold true:
$$
\|T_uv\|_{B^s_{p,r}}\,\leq\,C\,\|u\|_{L^\infty}\|\nabla v\|_{B^{s-1}_{p,r}}\qquad\quad\hbox{and}\qquad\quad
\|T_uv\|_{B^{s-t}_{p,q}}\,\leq\,C\,\|u\|_{B^{-t}_{\infty,r_1}}\|\nabla v\|_{B^{s-1}_{p,r_2}}\,.
$$
\item[(ii)] For any $(s_1,p_1,r_1)$ and $(s_2,p_2,r_2)$ in $\R\times[1,\infty]^2$ such that 
$s_1+s_2\geq0,$ $1/p:=1/p_1+1/p_2\leq1$ and $1/r:=1/r_1+1/r_2\leq1$
the remainder operator $R$ maps 
$B^{s_1}_{p_1,r_1}\times B^{s_2}_{p_2,r_2}$ in $B^{s_1+s_2}_{p,r}$, and one has:
\begin{eqnarray*}
 \left\|R(u,v)\right\|_{B^{s_1+s_2}_{p,r}}\;\leq\;\frac{C^{s_1+s_2+1}}{s_1+s_2}\,
\|u\|_{B^{s_1}_{p_1,r_1}}\,\|v\|_{B^{s_2}_{p_2,r_2}} & \quad\hbox{ if } & s_1+s_2\,>\,0 \\ [1ex]
 \left\|R(u,v)\right\|_{B^0_{p,\infty}}\;\leq\;C^{s_1+s_2+1}\,
\|u\|_{B^{s_1}_{p_1,r_1}}\,\|v\|_{B^{s_2}_{p_2,r_2}} & \quad\hbox{ if } & s_1+s_2\,=\,0\,,\;\;r=1\,.
\end{eqnarray*}
\end{itemize}
\end{thm}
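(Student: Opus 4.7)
The plan is to rely in both parts on the fundamental spectral localization properties of the paraproduct and remainder building blocks, combined with Bernstein's inequalities from Lemma \ref{l:bern}. For the paraproduct $T_uv=\sum_j S_{j-1}u\,\Delta_jv$, each dyadic term is spectrally supported in an annulus of size proportional to $2^j$, because $S_{j-1}u$ is localized in a ball of radius $\sim 2^{j-1}$ while $\Delta_jv$ is localized in an annulus of radius $\sim 2^j$; this means that $\Delta_k(T_uv)$ involves only a bounded number of indices $j$ with $|j-k|\leq N_0$ for some absolute $N_0$. For the remainder $R(u,v)=\sum_j\sum_{|k-j|\leq 1}\Delta_ju\,\Delta_kv$, each summand is spectrally supported in a ball (not an annulus) of size $\sim 2^j$, so $\Delta_m R(u,v)$ only sees frequencies with $j\gtrsim m$; this reversed condition is what forces the constraint $s_1+s_2\geq 0$ later on.

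For part (i), I would estimate $\|\Delta_k(T_uv)\|_{L^p}\lesssim \sum_{|j-k|\leq N_0}\|S_{j-1}u\|_{L^\infty}\|\Delta_jv\|_{L^p}$. In the first case, the trivial bound $\|S_{j-1}u\|_{L^\infty}\leq\|u\|_{L^\infty}$ combined with $\|\Delta_jv\|_{L^p}\lesssim 2^{-j}\|\Delta_j\nabla v\|_{L^p}$ (Bernstein, reversed inequality on an annulus) and multiplication by $2^{ks}$ yields the claimed bound in $B^s_{p,r}$ after taking the $\ell^r$ norm. In the second case, I would decompose $S_{j-1}u=\sum_{\ell\leq j-2}\Delta_\ell u$ and use $\|\Delta_\ell u\|_{L^\infty}\leq c_\ell 2^{\ell t}\|u\|_{B^{-t}_{\infty,r_1}}$ with $(c_\ell)\in\ell^{r_1}$; because $t>0$, the sum over $\ell<j$ converges to $\lesssim 2^{jt}$ times a convolution sequence, which upon pairing with the $B^{s-1}_{p,r_2}$ sequence of $\nabla v$ and applying Young's inequality in $\ell^q$ gives the estimate in $B^{s-t}_{p,q}$.

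For part (ii), I would write
\begin{equation*}
\Delta_m R(u,v)\,=\,\sum_{j\geq m-N_1}\sum_{|k-j|\leq 1}\Delta_m\bigl(\Delta_ju\,\Delta_kv\bigr)\,,
\end{equation*}
apply H\"older to obtain $\|\Delta_m R(u,v)\|_{L^p}\lesssim \sum_{j\geq m-N_1}\|\Delta_ju\|_{L^{p_1}}\|\Delta_kv\|_{L^{p_2}}$, and then multiply by $2^{m(s_1+s_2)}$. Writing $2^{m(s_1+s_2)}=2^{-(j-m)(s_1+s_2)}\cdot 2^{js_1}\cdot 2^{ks_2}\cdot 2^{(j-k)s_2}$ with $|j-k|\leq 1$, the factor $2^{-(j-m)(s_1+s_2)}$ becomes a convolution kernel in $j-m$. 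When $s_1+s_2>0$ this kernel lies in $\ell^1$ and the factor $1/(s_1+s_2)$ arises as the geometric-series constant, giving an $\ell^r$ bound via the generalized Young inequality with $1/r\leq 1/r_1+1/r_2$; when $s_1+s_2=0$ the kernel is bounded but not summable, so only an $\ell^\infty$ estimate survives, yielding the $B^0_{p,\infty}$ result under the further restriction $r=1$ so that $\sum_j\|\Delta_ju\|_{L^{p_1}}\|\Delta_kv\|_{L^{p_2}}$ can be controlled via H\"older in $\ell^{r_1}\times\ell^{r_2}$.

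The main obstacle is just bookkeeping: tracking the exact ranges of indices $(p,p_1,p_2,r,r_1,r_2)$ so that the H\"older and Young inequalities apply, and being careful about the endpoint case $s_1+s_2=0$, where the loss of summability forces the weaker target space $B^0_{p,\infty}$ and the restriction $r=1$. None of the individual steps is difficult, but one must ensure that the constants stay explicit in $s_1+s_2$ (hence the $C^{s_1+s_2+1}/(s_1+s_2)$ blow-up as $s_1+s_2\to 0^+$, which is sharp and will matter for the later sidewise-energy arguments).
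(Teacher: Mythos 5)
Your sketch is correct: the spectral localization of $S_{j-1}u\,\Delta_jv$ in annuli of size $2^j$ (resp.\ of $\Delta_ju\,\Delta_kv$ in balls of size $2^j$), combined with Bernstein's inequalities, H\"older for the sequence indices and a geometric-series/Young convolution argument producing the $C^{s_1+s_2+1}/(s_1+s_2)$ constant, is exactly the standard proof. Note that the paper itself does not prove Theorem \ref{t:op} but quotes it from the references (\cite{B-C-D}, chapter 2, going back to \cite{Bony}), and your argument is essentially the one given there.
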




Combining Theorem \ref{t:op} with Bony's paraproduct decomposition \eqref{eq:bony}, 
we easily get the following ``tame estimate''.
\begin{coroll}\label{c:op}
Let $a$ be a bounded function such that $\nabla a\in B^{s-1}_{p,r}$ for some $s>0$
and $(p,r)\in[1,+\infty]^2.$  Then for any $b\in B^s_{p,r}\cap L^\infty$ we have $ab\in B^s_{p,r}\cap L^\infty$
and there exists a constant $C$, depending only on $N,$ $p$ and $s$, such that 
$$
\|ab\|_{B^s_{p,r}}\leq C\Bigl(\|a\|_{L^\infty}\|b\|_{B^s_{p,r}}+\|b\|_{L^\infty}\|\nabla a\|_{B^{s-1}_{p,r}}\Bigr).
$$
\end{coroll}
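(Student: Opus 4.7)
The plan is to apply Bony's decomposition \eqref{eq:bony} to the product,
$$
ab \,=\, T_a b \,+\, T_b a \,+\, R(a,b)\,,
$$
and to estimate the three pieces separately by means of Theorem \ref{t:op}, combined with the standard embedding $L^\infty \hookrightarrow B^0_{\infty,\infty}$ and the elementary fact that $\|\nabla b\|_{B^{s-1}_{p,r}} \leq C\,\|b\|_{B^s_{p,r}}$, which follows from Bernstein's inequality (Lemma \ref{l:bern}) applied blockwise.

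For the first paraproduct I would directly invoke the first inequality of Theorem \ref{t:op}(i), with $a$ playing the role of the $L^\infty$ factor, to get
$$
\|T_a b\|_{B^s_{p,r}} \,\leq\, C\,\|a\|_{L^\infty}\,\|\nabla b\|_{B^{s-1}_{p,r}} \,\leq\, C\,\|a\|_{L^\infty}\,\|b\|_{B^s_{p,r}}\,.
$$
The remainder $R(a,b)$ is handled by Theorem \ref{t:op}(ii) with the choice $(s_1,p_1,r_1)=(0,\infty,\infty)$ for $a$ and $(s_2,p_2,r_2)=(s,p,r)$ for $b$: since $s>0$, the hypothesis $s_1+s_2>0$ is verified, and together with $\|a\|_{B^0_{\infty,\infty}}\leq C\,\|a\|_{L^\infty}$ this yields
$$
\|R(a,b)\|_{B^s_{p,r}} \,\leq\, C\,\|a\|_{L^\infty}\,\|b\|_{B^s_{p,r}}\,.
$$

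The only mildly delicate term is the second paraproduct $T_b a$: the statement of Theorem \ref{t:op}(i) formally takes its second argument in $B^s_{p,r}$, whereas in our setting $a$ is only assumed to be bounded, with $\nabla a\in B^{s-1}_{p,r}$. However, the right-hand side of the announced estimate involves $a$ only through $\nabla a$, and this is no accident: by Bernstein's inequality, for every $j\geq 0$ the dyadic block $\Delta_j a$ is spectrally localized in an annulus of size $2^j$, so that
$$
\|\Delta_j a\|_{L^p} \,\leq\, C\,2^{-j}\,\|\Delta_j \nabla a\|_{L^p}\,,
$$
and the usual proof of the paraproduct estimate goes through verbatim, producing
$$
\|T_b a\|_{B^s_{p,r}} \,\leq\, C\,\|b\|_{L^\infty}\,\|\nabla a\|_{B^{s-1}_{p,r}}\,.
$$

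Summing the three bounds gives the announced Besov control of $ab$, while $\|ab\|_{L^\infty}\leq\|a\|_{L^\infty}\|b\|_{L^\infty}$ is trivial. I expect the main (and essentially only) obstacle to be precisely the handling of $T_b a$ just sketched, since it demands revisiting the proof of Theorem \ref{t:op}(i) rather than applying it as a black box; everything else reduces to a direct combination of \eqref{eq:bony} and the two items of Theorem \ref{t:op}.
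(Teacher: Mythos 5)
Your argument is correct and is essentially the paper's own route: the paper offers no proof beyond ``combining Theorem \ref{t:op} with Bony's decomposition \eqref{eq:bony}'', which is exactly your three-term estimate of $T_ab$, $T_ba$ and $R(a,b)$. Note only that the first inequality of Theorem \ref{t:op}(i) is already stated with $\|\nabla v\|_{B^{s-1}_{p,r}}$ on the right-hand side precisely so that it applies to $T_b a$ when merely $\nabla a\in B^{s-1}_{p,r}$ (only annulus-localized blocks $\Delta_j a$, $j\geq 1$, enter $T_b a$), so your Bernstein re-derivation, while sound, amounts to a black-box use of that estimate.
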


Let us also recall the action of composition by smooth functions on Besov spaces. First of all, we have the following result.
\begin{thm} \label{t:comp}
 Let $f\in\mc{C}^\infty(\R)$ such that $f(0)=0$, $s>0$ and $(p,r)\in[1,+\infty]^2$.

If $u\in L^\infty\cap B^s_{p,r}$, then so does $f\circ u$ and moreover
$$
\left\|f\circ u\right\|_{B^s_{p,r}}\,\leq\,C\,\left\|u\right\|_{B^s_{p,r}}\,,
$$
for a constant $C$ depending only on $s$, $f'$ and $\|u\|_{L^\infty}$.
\end{thm}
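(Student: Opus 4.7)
The plan is to combine a telescoping decomposition of $f\circ u$ with the fundamental theorem of calculus, and then conclude via a standard ``almost orthogonality'' argument in Littlewood-Paley theory. The $L^\infty$ control is immediate: since $f(0)=0$, the mean value theorem gives $|f(u(x))|\le \|f'\|_{L^\infty(I)}\,|u(x)|$ with $I:=[-\|u\|_{L^\infty},\|u\|_{L^\infty}]$, so $f\circ u\in L^\infty$.

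For the $B^s_{p,r}$ bound, I would start from the telescoping identity
$$
f\circ u\,=\,f(S_{-1}u)\,+\,\sum_{j\ge0}\bigl(f(S_{j+1}u)\,-\,f(S_ju)\bigr)\,,
$$
which converges in $\mc{S}'$, and then rewrite each summand via the fundamental theorem of calculus as
$$
f(S_{j+1}u)\,-\,f(S_j u)\,=\,\Delta_j u\,\cdot\,m_j\,,\qquad m_j(x)\,:=\,\int_0^1 f'\bigl(S_j u(x)\,+\,\tau\,\Delta_j u(x)\bigr)\,d\tau\,.
$$
Since $\|S_j u+\tau\Delta_j u\|_{L^\infty}\le \|u\|_{L^\infty}$ uniformly in $j$ and $\tau\in[0,1]$, this gives $\|m_j\|_{L^\infty}\le \|f'\|_{L^\infty(I)}$, and consequently the zeroth-order bound $\|f(S_{j+1}u)-f(S_j u)\|_{L^p}\le \|f'\|_{L^\infty(I)}\,\|\Delta_j u\|_{L^p}$.

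The main obstacle, and the reason the result is not completely straightforward, is that each summand $g_j:=m_j\Delta_j u$ is \emph{not} spectrally localized, so $g_j$ cannot be identified with a dyadic block of $f\circ u$. To get around this, I would complement the $L^p$ bound with higher-order derivative estimates. Indeed, by Fa\`a di Bruno's formula together with Bernstein's inequality (Lemma \ref{l:bern}), which yields $\|\nabla^k S_j u\|_{L^\infty}\lesssim 2^{jk}\|u\|_{L^\infty}$ for $k\ge1$, one shows that for every integer $N>s$,
$$
\bigl\|\nabla^N g_j\bigr\|_{L^p}\,\le\,C_N\,2^{jN}\,\|\Delta_j u\|_{L^p}\,,
$$
with $C_N$ depending only on $\|u\|_{L^\infty}$ and the $\mc{C}^N(I)$-norm of $f$. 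Splitting $\Delta_k g_j$ according to whether $k\le j$ (using the zeroth-order bound) or $k>j$ (using $\|\Delta_k g_j\|_{L^p}\lesssim 2^{-kN}\|\nabla^N g_j\|_{L^p}$), the sequence $2^{ks}\bigl\|\Delta_k\sum_{j}g_j\bigr\|_{L^p}$ is dominated by the convolution of $\alpha_j:=2^{js}\|\Delta_j u\|_{L^p}\in\ell^r$ with a sequence in $\ell^1$ (whose summability follows from $s>0$ and $N-s>0$). By Young's inequality this produces the required bound $\bigl\|\sum_j g_j\bigr\|_{B^s_{p,r}}\le C\|u\|_{B^s_{p,r}}$. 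Finally, the low-frequency remainder $f(S_{-1}u)$ is handled directly by the chain rule and Bernstein's inequality applied to $S_{-1}u$: all its derivatives in $L^p$ are bounded by constants depending only on $\|u\|_{L^\infty}$ and $\mc{C}^N(I)$-norms of $f$ times $\|\Delta_{-1}u\|_{L^p}$, which embeds continuously in $B^s_{p,r}$. Collecting the estimates yields the claimed inequality with a constant $C$ depending only on $s$, $\|u\|_{L^\infty}$ and the relevant derivatives of $f$ on $I$.
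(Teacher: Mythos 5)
Your proof is correct, and it is essentially the argument this statement rests on: the paper quotes Theorem \ref{t:comp} without proof from the standard paradifferential literature (Meyer's first-linearization theorem, see e.g.\ Theorem 2.87 of \cite{B-C-D}), and your telescoping decomposition $f\circ u=f(S_0u)+\sum_{j}\bigl(f(S_{j+1}u)-f(S_ju)\bigr)$, with the zeroth-order bound $\|\Delta_k g_j\|_{L^p}\lesssim\|\Delta_ju\|_{L^p}$ for $k\le j$, the $\nabla^N$ bound via Fa\`a di Bruno and Bernstein for $k>j$, and the final $\ell^1\ast\ell^r$ convolution step, is exactly that classical proof; the constant you obtain, depending on $\|u\|_{L^\infty}$ and on finitely many derivatives of $f$ on the relevant compact interval, is what the statement's ``depending on $f'$ and $\|u\|_{L^\infty}$'' is shorthand for. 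The only cosmetic slip is the claim $\|S_ju+\tau\Delta_ju\|_{L^\infty}\le\|u\|_{L^\infty}$: since $S_j$ acts by convolution with a kernel whose $L^1$-norm exceeds $1$, the correct bound is $\le C_0\,\|u\|_{L^\infty}$ for a universal constant $C_0$, which merely enlarges the interval $I$ and affects nothing else in the argument.
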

We can state another result (see paper \cite{D-2010}, section 2, for its proof), which is strictly related to the previous one.
\begin{prop} \label{p:comp}
 Let $I\subset\R$ be an open interval and $f:I\,\longrightarrow\,\R$ be a smooth function.

Then, for all compact subset $J\subset I$, $s>0$ and $(p,r)\in[1,+\infty]^2$, there exists a constant $C$ such that,
for all functions $u$ valued in $J$ and with gradient $\nabla u\in B^{s-1}_{p,r}$, we have that also
$\nabla(f\circ u)\in B^{s-1}_{p,r}$ and
$$
\left\|\nabla\left(f\circ u\right)\right\|_{B^{s-1}_{p,r}}\,\leq\,C\,\left\|\nabla u\right\|_{B^{s-1}_{p,r}}\,.
$$
\end{prop}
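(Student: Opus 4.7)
The starting point is the chain rule identity $\nabla(f\circ u)\,=\,f'(u)\,\nabla u$, interpreted in the distributional sense, which reduces the proposition to a product-type estimate: we must bound $\|f'(u)\,\nabla u\|_{B^{s-1}_{p,r}}$ by $C\,\|\nabla u\|_{B^{s-1}_{p,r}}$. Since $u$ takes values in the compact set $J\subset I$ and $f'$ is continuous on $I$, one has $f'(u)\in L^\infty$ with norm controlled by $\|f'\|_{L^\infty(J)}$. Bony's paraproduct decomposition \eqref{eq:bony} splits the product into three pieces:
\begin{equation*}
f'(u)\,\nabla u \,=\, T_{f'(u)}\,\nabla u \,+\, T_{\nabla u}\,f'(u) \,+\, R\bigl(f'(u),\nabla u\bigr)\,,
\end{equation*}
and the first (``good'') paraproduct is already handled by the first estimate of Theorem \ref{t:op}(i), yielding the bound $C\,\|f'(u)\|_{L^\infty}\,\|\nabla u\|_{B^{s-1}_{p,r}}$.

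To control the remaining two pieces, positive Besov regularity of $f'(u)$ is needed. I would build it by first normalizing: fix any $t_0\in J$, set $\tilde u:=u-t_0$ and $g(t):=f'(t+t_0)-f'(t_0)$, so that $g\in\mc{C}^\infty$, $g(0)=0$, $\|\tilde u\|_{L^\infty}\leq\mathrm{diam}(J)$ and $\nabla\tilde u=\nabla u$. The constant part $f'(t_0)\,\nabla u$ contributes trivially, and it remains to treat $g(\tilde u)\,\nabla u$. Here I would apply the Meyer--Bony telescoping identity
\begin{equation*}
g(\tilde u) \,=\, \sum_{j\geq-1}\bigl(g\circ S_{j+1}\tilde u\,-\,g\circ S_j\tilde u\bigr) \,=\, \sum_{j\geq-1}\Delta_j\tilde u\cdot M_j\,,\qquad M_j\,:=\,\int_0^1 g'\bigl(S_j\tilde u+\tau\,\Delta_j\tilde u\bigr)\,d\tau\,,
\end{equation*}
where the multipliers $M_j$ enjoy a uniform bound $\|M_j\|_{L^\infty}\leq\|g'\|_{L^\infty(J-t_0)}$ inherited from the smoothness of $g$ and the boundedness of $\tilde u$. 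Combined with Bernstein's inequality $\|\Delta_j\tilde u\|_{L^p}\,\lesssim\,2^{-j}\,\|\Delta_j\nabla u\|_{L^p}$ for $j\geq 0$, this identity turns $B^{s-1}_{p,r}$-control of $\nabla u$ into dyadic-block control of $g(\tilde u)$, which, inserted into the refined paraproduct bound of Theorem \ref{t:op}(i) for $T_{\nabla u}\,g(\tilde u)$ and into the remainder estimate of Theorem \ref{t:op}(ii) for $R(g(\tilde u),\nabla u)$, closes both contributions by $C\,\|\nabla u\|_{B^{s-1}_{p,r}}$.

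The main obstacle is precisely that Theorem \ref{t:comp} cannot be invoked directly, since it requires $u\in L^\infty\cap B^s_{p,r}$, while our assumption only gives $u\in L^\infty$ together with $\nabla u\in B^{s-1}_{p,r}$ (a modulo-constants condition on $u$). The telescoping identity above is the natural surrogate for Theorem \ref{t:comp} in this setting, but it forces the paraproduct and remainder estimates to be performed dyadic block by dyadic block, combining spectral localization of $\Delta_j\tilde u$ with the non-localized but uniformly bounded multipliers $M_j$. Once that bookkeeping is carried out, all constants depend only on $J$, on the restriction of $f$ to a neighborhood of $J$, and on the Besov exponents $(s,p,r)$, as announced.
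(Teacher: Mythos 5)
Your strategy breaks down at the closing step, and the failure is structural, not a matter of bookkeeping. After the telescoping you compress all the information on $f'(u)$ into ``dyadic-block control of $g(\tilde u)$'', i.e.\ at best $g(\tilde u)\in B^s_{p,r}\cap L^\infty$ with $\|g(\tilde u)\|_{B^s_{p,r}}\lesssim\|\nabla u\|_{B^{s-1}_{p,r}}$, and you then feed this into Theorem \ref{t:op} to handle $T_{\nabla u}g(\tilde u)$ and $R(g(\tilde u),\nabla u)$. This cannot cover all $s>0$ and $(p,r)\in[1,+\infty]^2$: the remainder estimate of Theorem \ref{t:op}(ii) requires the sum of regularities to be nonnegative and $1/p_1+1/p_2\le1$, so pairing $B^s_{p,r}$ with $B^{s-1}_{p,r}$ needs $p\ge2$ and lands in $B^{2s-1}_{p/2,r}$, which embeds back into $B^{s-1}_{p,r}$ only when $s\ge N/p$, while pairing $L^\infty=B^0_{\infty,\infty}$ with $B^{s-1}_{p,r}$ needs $s\ge1$; similarly the ``refined'' paraproduct bound of Theorem \ref{t:op}(i) requires the low-frequency factor $\nabla u$ to lie in an $L^\infty$-based space $B^{-t}_{\infty,r_1}$, which by Bernstein costs $N/p$ and leads to the same constraint $s\ge N/p$. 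Moreover the bilinear estimate you would need, $\|ab\|_{B^{s-1}_{p,r}}\lesssim\|a\|_{B^s_{p,r}\cap L^\infty}\,\|b\|_{B^{s-1}_{p,r}}$, is simply false in that generality (already for $p=\infty$ and $s\le1/2$ the remainder is unbounded, since $s+(s-1)\le0$: matched-frequency wave packets whose products create fixed-sign low-frequency bumps give counterexamples). In other words, once you forget the correlation between $f'(u)$ and $\nabla u$, the estimate is lost. The proof the paper points to (\cite{D-2010}, Section 2) keeps that correlation: one telescopes $f\circ u$ itself, writes $\nabla(f\circ u)=\nabla\bigl(f(S_0u)\bigr)+\sum_{j\ge0}\nabla\bigl(\Delta_ju\,\widetilde M_j\bigr)$, and estimates each output block $\Delta_q$ directly, always putting $\Delta_j\nabla u$ (or $2^j\Delta_j u$) in $L^p$ and the multipliers in $L^\infty$, so that no product law for $f'(u)\,\nabla u$ is ever invoked. (In the regime actually used in this paper, $p=\infty$ and $s$ close to $1$, your route would close; but it does not prove the proposition as stated.)

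Two further points in the telescoping itself need repair. First, $S_j\tilde u+\tau\Delta_j\tilde u$ is not valued in $J-t_0$, nor even in $I-t_0$ in general (frequency truncations do not preserve the range), so the bound $\|M_j\|_{L^\infty}\le\|g'\|_{L^\infty(J-t_0)}$ is unjustified and $M_j$ may not even be defined; one must first replace $f$ by a smooth extension with bounded derivatives coinciding with $f$ on a neighbourhood of $J$, which is legitimate because $u$ is valued in $J$. Second, uniform boundedness of the $M_j$ alone does not yield dyadic-block control of $g(\tilde u)$: since $M_j$ is not spectrally localized, the contribution of the terms with $j\ll q$ to $\Delta_q$ gains the factor $2^{-qs}$ only through the derivative bounds $\|\nabla^kM_j\|_{L^\infty}\lesssim2^{jk}$ (Bernstein plus $\|\tilde u\|_{L^\infty}\le C(J)$), which your sketch never uses; with the $L^\infty$ bound alone one only reaches $B^0$-type information. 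Finally, the block $\Delta_{-1}\tilde u$ is controlled only in $L^\infty$ and not in $L^p$, so $g(\tilde u)$ itself is not in $B^s_{p,r}$ and the low-frequency piece $g(S_0\tilde u)$ must be treated separately (it is harmless, as all its derivatives are bounded in terms of $J$, $f$ and $\|\Delta_{-1}\nabla u\|_{L^p}$). These last points are fixable along standard lines, but combined with the first paragraph they mean the proposal does not establish the proposition in the stated generality.
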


\subsection{The space $\mc{A}$ of the coefficients} \label{ss:coeff}

Let us now introduce the functional spaces we want to deal with.
For the time being let us deal with pointwise conditions, which are in some sense classical. We will generalize them to integral ones in the next subsection.

Modulo the extension of  the coefficients, we can suppose them to be globally defined over $\R_x$. Note here that the pointwise conditions imply, in particular, $\omega$ to be continuous; then, we extend it by the constant values $\omega(0)$ for $x<0$ and $\omega(1)$ for $x>0$. Under integral assumptions, instead, the continuity is lost, and then we can extend just by $0$ out of the domain $\Omega$.

Finally, for the sake of generality we will consider the instance of any space dimension $N\geq1$.

\begin{defin} \label{d:LL}
 A function $f\in L^\infty(\R^N)$ is said to be \emph{log-Lipschitz} continuous, and we write $f\in LL(\R^N)$, if the quantity
$$
|f|_{LL,\infty}\,:=\,\sup_{x,y\in\R^N,\,|y|<1}
\left(\frac{\left|f(x+y)\,-\,f(x)\right|}{|y|\,\log\left(1\,+\,\frac{1}{|y|}\right)}\right)\,<\,+\infty\,.
$$
We set $\|f\|_{LL}\,:=\,\|f\|_{L^\infty}\,+\,|f|_{LL,\infty}$.
\end{defin}
Let us define also some Zygmund classes.
\begin{defin} \label{d:LZ}
 A function $g\in L^\infty(\R^N)$ is said to be \emph{log-Zygmund} continuous, and we write $g\in LZ(\R^N)$, if the quantity
$$
|g|_{LZ,\infty}\,:=\,\sup_{x,y\in\R^N,\,|y|<1}
\left(\frac{\left|g(x+y)\,+\,g(x-y)\,-\,2\,g(x)\right|}{|y|\,\log\left(1\,+\,\frac{1}{|y|}\right)}\right)\,<\,+\infty\,.
$$
We set $\|g\|_{LZ}\,:=\,\|g\|_{L^\infty}\,+\,|g|_{LZ,\infty}$.

The space $Z(\R^N)$ of \emph{Zygmund} continuous functions is defined instead by the condition
$$
|g|_{Z,\infty}\,:=\,\sup_{x,y\in\R^N,\,|y|<1}
\left(\frac{\left|g(x+y)\,+\,g(x-y)\,-\,2\,g(x)\right|}{|y|}\right)\,<\,+\infty\,,
$$
and, analogously, we set $\|g\|_{Z}\,:=\,\|g\|_{L^\infty}\,+\,|g|_{Z,\infty}$.
\end{defin}

Let us recall that $Z\equiv B^1_{\infty,\infty}$ (see e.g. \cite{Ch1995} for the proof), while the space $LZ$ coincides with the logarithmic Besov space $B^{1-\log}_{\infty,\infty}$ (see for instance \cite{C-DS-F-M_th}, section 3), which is defined by the condition
$$
\left\|u\right\|_{B^{1-\log}_{\infty,\infty}}\,:=\,\sup_{\nu\geq-1}\left(2^\nu\,\left(\nu+1\right)^{-1}\,\left\|\Delta_\nu u\right\|_{L^\infty}\right)\,<\,+\infty\,.
$$
Logarithmic Sobolev spaces were first introduced in \cite{C-M}: they come out in a natural way in the study of wave equations with non-Lipschitz coefficients.
For the generalization to the class of logarithmic Besov spaces, one can refer to \cite{F_phd}. However, it is
 enough to keep in mind that they are intermediate classes between the classical ones, of which they enjoy the most of the properties (the proofs can be obtained just slightly modifying the classical arguments).

We do not have an exact identification of Lipschitz-type classes as Besov spaces; however we can characterize the space $LL$ by the condition (on the low frequencies)
$$
\sup_{\nu\geq-1}\biggl((\nu+1)^{-1}\,\left\|\nabla S_\nu u\right\|_{L^\infty}\biggr)\,<\,+\infty
$$
(see paper \cite{C-L}, Prop. 3.3).

Now, it is
 evident that $LL$ is an algebra, and that it is
 invariant under the transformation $\mc{I}:z\mapsto1/z$ for functions which fulfill \eqref{eq:hyp}.
Furthermore, by Bony's paraproduct decomposition and their dyadic characterization, it turns out that both $Z$ and $LZ$ are algebras; the fact that they are still invariant under the action of $\mc{I}$ follows from Proposition \ref{p:comp} and hypothesis \eqref{eq:hyp}.

Finally, let us recall the following embeddings: $Z\hra LL\hra LZ$. The latter is evident from the definitions, while the former can be proved thanks again to the Littlewood-Paley decomposition (see e.g. \cite{B-C-D}, chap. 2).

\subsection{On the Zygmund condition} \label{ss:zygmund}

As mentioned in the introduction, we defined the class $\mc{A}$ of the coefficients by integral conditions. A fortiori, Theorems \ref{th:Z} and \ref{th:LL-LZ} hold true under the pointwise assumptions
\begin{eqnarray}
\sup_{x\in\R}\,\bigl|\omega(x+y)\,-\,\omega(x)\bigr| & \leq & C\,|y|\,\log\left(1+\frac{1}{|y|}\right) \label{est:LL} \\
\sup_{x\in\R}\,\bigl|\omega(x+y)\,+\,\omega(x-y)\,-\,2\,\omega(x)\bigr| & \leq & C\,|y| \label{est:Z}
\end{eqnarray}
(and their analogue for the log-Zygmund behaviour), which we introduced in the previous subsection.

So, it is
 natural to generalize the Zygmund classes in the following sense. Again, we present the subject in the general instance of $\R^N$.
\begin{defin} \label{def:zyg_p}
Let $p\in[1,+\infty]$. We define the space $\mc{Z}_p(\R^N)$ as the set of $f\in L^p(\R^N)$ such that there exists a constant $C>0$ for which
$$
\bigl\|f(\,\cdot\,+y)\,+\,f(\,\cdot\,-y)\,-\,2\,f(\,\cdot\,)\bigr\|_{L^p(\R^N)}\,\leq\,C\,|y|
$$
for all $y\in\R^N$ with $|y|<1$.
\end{defin}
Note that $Z$, introduced in Definition \ref{d:LZ}, coincides with $\mc{Z}_\infty$, while the integral condition \eqref{est:Z_1} defines the space $\mc{Z}_1$.

Note also that one can define the corresponding spaces $\mc{LZ}_p$ introducing a logarithmic loss in the right hand side of the previous relation. Obviously, nothing changes with respect to this case.

Exactly as for the $L^\infty$ instance, the following proposition holds true.
\begin{prop} \label{p:zygm_p}
For any $p\in[1,+\infty]$, the classes $\mc{Z}_p(\R^N)$ and $\mc{LZ}_p(\R^N)$ coincide, respectively, with the Besov spaces $B^1_{p,\infty}(\R^N)$ and $B^{1-\log}_{p,\infty}(\R^N)$.
\end{prop}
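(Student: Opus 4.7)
The plan is to establish the two equalities simultaneously in both directions, closely following the classical argument that identifies the usual Zygmund class $Z=B^1_{\infty,\infty}$. The starting point for both inclusions will be the Littlewood-Paley decomposition $f=\sum_\nu\Delta_\nu f$, combined with the fact that, since the chosen profile $\chi$ is radial, so is $\varphi$, and hence the convolution kernel $K_\nu$ representing $\Delta_\nu$ for $\nu\geq 0$ is even and has zero mean.

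For the inclusion $B^1_{p,\infty}\hookrightarrow \mc{Z}_p$, I would fix $y\in\R^N$ with $|y|<1$, write the second-order difference operator $D_y^2 f(x):=f(x+y)+f(x-y)-2f(x)$, and split the sum $D_y^2 f=\sum_\nu D_y^2\Delta_\nu f$ at the threshold $2^\nu|y|\sim 1$. On the low-frequency side ($2^\nu|y|\le 1$), Taylor expansion together with Bernstein's inequality (Lemma \ref{l:bern}) yields $\|D_y^2\Delta_\nu f\|_{L^p}\lesssim |y|^2\,2^{2\nu}\|\Delta_\nu f\|_{L^p}$; on the high-frequency side, I would simply use $\|D_y^2\Delta_\nu f\|_{L^p}\lesssim \|\Delta_\nu f\|_{L^p}$. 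Inserting the hypothesis $\|\Delta_\nu f\|_{L^p}\lesssim 2^{-\nu}$ (resp.\ $\lesssim (\nu+1)2^{-\nu}$ in the log case) and summing two geometric-type series produces exactly the $\mc{Z}_p$ bound (resp.\ the $\mc{LZ}_p$ bound with the extra factor $\log(1+1/|y|)$ coming from the finite number $\sim\log(1/|y|)$ of low frequencies).

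The reverse inclusion $\mc{Z}_p\hookrightarrow B^1_{p,\infty}$ is the delicate direction, and is where the radial symmetry of $\varphi$ is essential. For $\nu\geq 0$, writing $\Delta_\nu f=K_\nu*f$ with $K_\nu(y)=2^{\nu N}K(2^\nu y)$, $K=\mc F^{-1}\varphi$, the evenness $K_\nu(-y)=K_\nu(y)$ and the cancellation $\int K_\nu=\varphi(0)=0$ allow me to symmetrize:
\[
2\,\Delta_\nu f(x)\;=\;\int_{\R^N}K_\nu(y)\bigl(f(x+y)+f(x-y)-2f(x)\bigr)\,dy.
\]
Taking the $L^p$-norm in $x$, using Minkowski's integral inequality and the Zygmund hypothesis for $|y|<1$, while invoking the trivial bound $\|D_y^2 f\|_{L^p}\le 4\|f\|_{L^p}$ and the Schwartz decay of $K$ for $|y|\ge 1$, reduces the matter to estimating $\int|K(z)|\,g(z/2^\nu)\,dz$ with $g(y)=|y|$ (resp.\ $g(y)=|y|\log(1+1/|y|)$). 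After the change of variables $z=2^\nu y$, the bound $\|\Delta_\nu f\|_{L^p}\lesssim 2^{-\nu}$ (resp.\ $\lesssim (\nu+1)\,2^{-\nu}$) drops out, and the block $\Delta_{-1}f=\chi(D)f$ is handled by the trivial $L^p$-boundedness of $\chi(D)$.

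The main obstacle I anticipate is a clean estimate of the logarithmic integral in the reverse direction: after rescaling, one has to track how the factor $\log(1+2^\nu/|z|)$ interacts with $|K(z)||z|$ on the regions $|z|\le 1$, $1<|z|\le 2^\nu$ and $|z|>2^\nu$ to produce exactly the $(\nu+1)\,2^{-\nu}$ decay, with no lower-order losses. Everything else is a routine adaptation of the standard $p=\infty$ argument, so I would not dwell on the finer points — once the symmetrization trick is in place and the Littlewood-Paley characterization of $B^{1-\log}_{p,\infty}$ is read off Definition \ref{d:besov}, the two inclusions fit together to give the stated equalities.
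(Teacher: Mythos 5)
Your proposal is correct and follows essentially the same route as the paper: Taylor expansion plus Bernstein's inequality on low frequencies with the sum cut at $2^\nu|y|\sim 1$ for the embedding $B^1_{p,\infty}\hookrightarrow\mc{Z}_p$, and the symmetrization of $\Delta_\nu f$ via the even, mean-zero kernel $\mc{F}^{-1}\varphi$ for the converse. The details you flag (the logarithmic integral after rescaling, the $|y|\geq1$ region, the block $\Delta_{-1}$) are routine and are in fact glossed over in the paper's own proof, which treats only the $\mc{Z}_p$ case explicitly and notes that the logarithmic variant is analogous.
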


\begin{proof}
We will closely follow the lines of the classical proof (for $p=+\infty$). Let us just focus on the $\mc{Z}_p$ instance: the logarithmic loss can be handled in an analogous way.
\begin{itemize}
  \item[(i)] Let us first consider a $f\in B^{1}_{p,\infty}$ and take $x$ and $y\,\in\R^N$, with
$|y|<1$. For all fixed $n\in\N$ we can write:
\begin{eqnarray*}
f(x+y)+f(x-y)-2f(x) & = & \sum_{k<n}\left(\Delta_kf(x+y)+\Delta_kf(x-y)-2\Delta_kf(x)\right)\,+ \\
& & \qquad\qquad+\,\sum_{k\geq n}\left(\Delta_kf(x+y)+\Delta_kf(x-y)-2\Delta_kf(x)\right)\,.
\end{eqnarray*}
First, we take advantage of the Taylor's formula up to second order to handle the former terms; then, we take the $L^p$ norms of both sides. Using also Definition \ref{d:besov}
and Bernstein's inequalities, we get
\begin{eqnarray*}
 \left\|f(x+y)+f(x-y)-2f(x)\right\|_{L^p_x} & \leq & C\,|y|^2\sum_{k<n}\left\|\nabla^2\Delta_kf\right\|_{L^p_x}\,+\,
4\,\sum_{k\geq n}\left\|\Delta_kf\right\|_{L^p_x} \\
& \leq & C\left(|y|^2\sum_{k<n}2^k\,+\,\sum_{k\geq n}2^{-k}\right) \\
& \leq & C\,\left(|y|^2\,2^n\,+\,2^{-n}\right)\,.
\end{eqnarray*}
Now, as $|y|<1$, the choice $n=1+\left[\log_2\left(1/|y|\right)\right]$ (where with $[\sigma]$ we mean the greatest positive
integer less than or equal to $\sigma$) completes the proof of the first part.
\item[(ii)] Now, given a function $f\in\mc{Z}_p$, we want to estimate the $L^\infty$ norm of its localized part $\Delta_kf$.

Notice that applying the operator $\Delta_k$ plays the same role as the convolution with the inverse Fourier transform of
the function $\vphi(2^{-k}\cdot)$, which we call $h_k(x)=2^{kN}h(2^k\cdot)$, where we set
$h=\mc{F}^{-1}_\xi(\vphi)$. As $\vphi$ is an even function, so does $h$; moreover we have
$$
\int h(z)\,dz\,=\,\int\mc{F}^{-1}_\xi(\vphi)(z)\,dz\,=\,\vphi(\xi)_{|\xi=0}\,=\,0\,.
$$
Therefore, we can write:
$$
\Delta_kf(x)\,=\,2^{kN-1}\int h(2^ky)\left(f(x+y)+f(x-y)-2f(x)\right)\,dy\,,
$$
and using the definition of the space $\mc{Z}_p$ completes the proof of the second part.
 \end{itemize}

The proposition is now completely proved.
\end{proof}

\begin{coroll} \label{c:Z-BV}
$\mc{Z}_1\,\equiv\,B^1_{1,\infty}$. In particular, $W^{1,1}\,\hra\,\mc{Z}_1$.
\end{coroll}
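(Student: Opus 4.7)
The first assertion, $\mc{Z}_1 \equiv B^1_{1,\infty}$, is nothing but Proposition \ref{p:zygm_p} specialized to $p=1$; no separate argument is needed.

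For the embedding $W^{1,1}\hra\mc{Z}_1$, the plan is to appeal to the chain of continuous inclusions $B^k_{p,1}\hookrightarrow W^{k,p}\hookrightarrow B^k_{p,\infty}$ already recalled after Definition \ref{d:besov}. Taking $k=1$ and $p=1$ gives $W^{1,1}\hra B^1_{1,\infty}$, and combining this with the identification just established yields $W^{1,1}\hra\mc{Z}_1$.

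Alternatively, and perhaps more transparently, one can verify the second-difference condition of Definition \ref{def:zyg_p} by hand. For $f\in W^{1,1}(\R^N)$ (first approximating by smooth functions and then passing to the limit by density) and $y\in\R^N$ with $|y|<1$, write
$$
f(x+y)\,+\,f(x-y)\,-\,2f(x)\;=\;\int_0^1\bigl(\nabla f(x+ty)\,-\,\nabla f(x-ty)\bigr)\cdot y\,dt\,.
$$
Taking the $L^1$ norm in $x$ and using translation invariance of the Lebesgue measure, the right-hand side is bounded by $2\,|y|\,\|\nabla f\|_{L^1}$, which is exactly the defining inequality of $\mc{Z}_1$.

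No real obstacle arises here: the only subtle point is to make sure that the second-difference formula is legitimate for $W^{1,1}$ functions, which is handled by the standard mollification argument together with the fact that the right-hand side of the final estimate depends only on the $W^{1,1}$ norm of $f$, allowing the passage to the limit in $\mc{S}'$.
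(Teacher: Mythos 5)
Your proposal is correct and its primary route is exactly the paper's (implicit) argument: Proposition \ref{p:zygm_p} with $p=1$ gives $\mc{Z}_1\equiv B^1_{1,\infty}$, and the embedding chain $B^k_{p,1}\hra W^{k,p}\hra B^k_{p,\infty}$ recalled in Subsection \ref{ss:LP} with $k=p=1$ yields $W^{1,1}\hra\mc{Z}_1$. The alternative direct verification via the second-difference formula, translation invariance of the $L^1$ norm and mollification is also sound, but it is not needed beyond what the paper already provides.
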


\begin{ex} \label{ex:Z}
In \cite{Tar}  an example is given of a  $\mc{Z}_1(0,2\pi)$ function $w$, but for which there is no  constant $C$ such that, for all $0<h<1$, one has
$$
\int_{0}^{2\pi-h}\bigl|w(x+h)\,-\,w(x)\bigr|\,dx\,\leq\,C\,h\,.
$$

Recall  (see \cite{Brezis}, chapter 8) that the space $BV(\Omega)$ (in any dimension $N\geq1$) is characterized by the following property:
\textit{there exists a constant $C>0$ such that, for all $\Theta\subset\subset\Omega$ and all $|h|<\mbox{dist}(\Theta,\Omega)$, one has
$$
\int_\Theta\bigl|w(x+h)\,-\,w(x)\bigr|\,dx\,\leq\,C\,|h|\,.
$$}

After a simple rescaling of Tarama's example, we then have that the Weierstrass function
$$
w(x)\,=\,\sum_{n=1}^{+\infty}2^{-n}\,\cos\bigl(2^{n+1}\,\pi\,x\bigr)
$$
is in the class $\mc{Z}_1(\Omega)$, but  not in $BV(\Omega)$.
\end{ex}

Therefore, from this point of view, Theorem \ref{th:Z} represents an extension of the result of \cite{FC-Z} for $BV$ coefficients.
However Zygmund conditions are much more related with the second derivative (for  smooth functions). 
\subsection{Energy estimates for hyperbolic operators with rough coefficients} \label{ss:cauchy}

Due to the sidewise energy estimates technique, which we will explain in the next section, our results strongly rely on energy estimates for second order strictly hyperbolic operators
with non-Lipschitz coefficients
\begin{equation} \label{def:L}
Lv(\tau,y)\,:=\,\d_\tau^2v(\tau,y)\,-\,\omega(\tau)\,\d_y^2v(\tau,y)\,,
\end{equation}
defined in some strip $[0,\mc{T}]\times\R_y$. Let us point out that we are considering the $1$-dimensional case, which is enough for our purposes, even if all the facts we are going to quote in the present subsection are true
also in the general instance $\R^N_y$, with $N\geq1$.

Here $\omega$ enjoys property \eqref{eq:hyp} as before, and belongs to the space $\mc{A}(\R_\tau)$. The class $\mc{A}$ can be, as before, $LL$, $Z$ or $LZ$ (defined either pointwise or by an integral condition). Note that, under these assumptions,
Theorem  4 of \cite{C-DG-S} still holds true: from it we gather the finite propagation speed for operator \eqref{def:L}. Thanks to this, we can then apply the results we are going to quote to  the case of bounded domains under consideration.

\medbreak
It is
 well-known that energy estimates are relevant to prove the well-posedness of the Cauchy problem for the operator $L$ on Sobolev spaces. Such a question for operator \eqref{def:L} with non-Lipschitz coefficients has been studied for a long time (see for instance papers \cite{H-S} and \cite{C-DG-S}), and there is a quite broad literature devoted to it; nevertheless, in despite of this, it is
 still not completely well-understood.

Just to have an idea, keep in mind that, under a Lipschitz continuity condition on the coefficient $\omega$, an energy estimate with no loss of derivatives holds true (in suitable Sobolev norms).
Whenever the Lipschitz condition fails, instead, one can find, in general, energy estimates with (finite or infinite) loss of derivatives: the regularity of the solution deteriorates as the time goes on, and we can control just worse Sobolev norms than the ones of the initial data.

In what follows, we limit ourselves to quote just the results we will need to prove our statements. For an overview of the problem and the present state of the art, we refer e.g. to \cite{C-DS-F-M_wp}
and the references therein.

\medbreak
So, let us start recalling a result on the log-Lipschitz continuity condition, whose proof  can be found in \cite{C-DG-S}. We refer to paper \cite{C-L} for the case of  log-Lipschitz continuous coefficients $\omega=\omega(\tau,y)$ on all its variables.

\begin{prop} \label{p:LL}
Let us consider the operator $L$, defined by \eqref{def:L}, with $\omega$ fulfilling condition  \eqref{est:LL_1} and such that $0<\omega_*\leq\omega(\tau)\leq\omega^*$.

For any $s\in\R$, there exist positive constants $\beta$ (depending only on $\omega_*$ and $\omega^*$ and on $|\omega|_{LL}$) and $C$ (depending also on $s$, but not on $|\omega|_{LL}$), such that for $\tau\in[0,\mc{T}]$ one has
\begin{equation} \label{est:en_LL} 
\left\|v(\tau)\right\|_{H^{s-\beta}}+\left\|\d_\tau v(\tau)\right\|_{H^{s-1-\beta}}\,\leq\,C\left(\left\|v(0)\right\|_{H^{s}}+\left\|\d_\tau v(0)\right\|_{H^{s-1}}+\int^\tau_0\left\|Lv(\tau')\right\|_{H^{s-1-\beta}}d\tau'\right)
\end{equation}
for all $v\in\mc{C}^2([0,\mc{T}];H^\infty(\R_y))$.
\end{prop}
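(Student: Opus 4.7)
The plan is to follow the classical Colombini--De Giorgi--Spagnolo strategy: localize the equation in frequency via Littlewood--Paley, replace $\omega$ in each block by a mollified version whose scale is matched to the frequency of the block, and then run an energy argument whose time derivative carries a logarithmically growing factor in the frequency index. Once this factor is exponentiated in time it produces the $2^{\beta\nu}$ weight that is exactly a $\beta$-order derivative loss.

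Concretely, for each $\nu\geq-1$ I set $v_\nu:=\Delta_\nu v$ and I fix a mollified coefficient $\omega_\nu$, obtained by convolving $\omega$ with a smooth bump of width $2^{-\nu}$ in $\tau$. The log-Lipschitz assumption \eqref{est:LL_1}, together with the low-frequency characterization of $LL$ recalled in subsection \ref{ss:coeff}, gives the two key bounds
\begin{equation*}
|\omega(\tau)-\omega_\nu(\tau)|\,\lesssim\,|\omega|_{LL}\,2^{-\nu}(\nu+1)\,,\qquad |\partial_\tau\omega_\nu(\tau)|\,\lesssim\,|\omega|_{LL}\,(\nu+1)\,,
\end{equation*}
with $\omega_*/2\leq\omega_\nu\leq 2\omega^*$ for $\nu$ large. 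Applying $\Delta_\nu$ to $Lv=f$ rewrites the localized equation as
\begin{equation*}
\partial_\tau^2 v_\nu-\omega_\nu\,\partial_y^2 v_\nu\,=\,\Delta_\nu f+[\omega,\Delta_\nu]\partial_y^2 v+(\omega_\nu-\omega)\partial_y^2 v_\nu\,,
\end{equation*}
and I introduce the block energy
\begin{equation*}
E_\nu(\tau):=\int_{\R_y}\Bigl(|\partial_\tau v_\nu|^2+\omega_\nu(\tau)|\partial_y v_\nu|^2\Bigr)dy\,.
\end{equation*}

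Differentiating $E_\nu$ in $\tau$, the $\partial_\tau\omega_\nu$-term contributes $C|\omega|_{LL}(\nu+1)E_\nu$; the forcing gives the standard $\|\Delta_\nu f\|_{L^2}\sqrt{E_\nu}$ term; the commutator $[\omega,\Delta_\nu]\partial_y^2 v$ is controlled in $L^2$ by Bony's decomposition and Theorem~\ref{t:op} as $C|\omega|_{LL}(\nu+1)\,c_\nu 2^{-\nu s}\|v\|_{H^{s+1}}$ with $(c_\nu)\in\ell^2$; and the remainder $(\omega_\nu-\omega)\partial_y^2 v_\nu$, thanks to Bernstein's Lemma~\ref{l:bern}, is bounded by $2^{2\nu}\|\omega_\nu-\omega\|_{L^\infty}\|v_\nu\|_{L^2}^2\lesssim|\omega|_{LL}(\nu+1)E_\nu$. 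Collecting these estimates and applying Gr\"onwall's inequality yields
\begin{equation*}
\sqrt{E_\nu(\tau)}\,\leq\,e^{C|\omega|_{LL}(\nu+1)\tau}\biggl(\sqrt{E_\nu(0)}+\int_0^\tau\bigl(\|\Delta_{\nu}f(\tau')\|_{L^2}+c_\nu(\tau')2^{-\nu s}\|v(\tau')\|_{H^{s+1}}\bigr)d\tau'\biggr).
\end{equation*}

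To conclude, I multiply by $2^{2\nu(s-1-\beta)}$, take the $\ell^2_\nu$ sum, and observe that $e^{C|\omega|_{LL}(\nu+1)\mathcal T}\leq K\,2^{\beta\nu}$ once $\beta$ is chosen larger than $C|\omega|_{LL}\mathcal T/\log 2$. The low frequency $\nu=-1$ is handled trivially and the first-order ``energy'' quantity $\|v_\nu\|_{L^2}+\|\partial_\tau v_\nu\|_{H^{-1}}$ is recovered from $E_\nu$ by Bernstein and by the uniform ellipticity of $\omega_\nu$. Summing the $v$-dependent term on the right-hand side is absorbed into the left by a Gr\"onwall argument in $\tau$, giving \eqref{est:en_LL}. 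The main technical obstacle is bookkeeping the commutator $[\omega,\Delta_\nu]\partial_y^2 v$: its size must be $O((\nu+1))$ with coefficients summable in $\nu$, so that it matches exactly the linear-in-$\nu$ loss from the coefficient regularization. The fact that both error sources scale as $|\omega|_{LL}(\nu+1)$ is precisely what makes a \emph{finite} loss $\beta$ possible; any modulus of continuity weaker than log-Lipschitz would produce strictly super-linear growth in $\nu$ and destroy the summation, in agreement with the sharpness results of section~\ref{s:sharp}.
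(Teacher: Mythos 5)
The paper does not actually prove Proposition \ref{p:LL}: it is quoted, with proof, from Colombini--De Giorgi--Spagnolo \cite{C-DG-S} (and from \cite{C-L} for coefficients depending on all the variables). Your strategy --- frequency localization, mollification of $\omega$ at scale $2^{-\nu}$, a block energy whose logarithmically growing Gr\"onwall factor in $\nu$ is converted into the weight $2^{\beta\nu}$, i.e.\ a loss of $\beta$ derivatives --- is exactly the classical scheme, so in outline you are reproducing the cited proof. One structural remark: since $\omega$ depends only on $\tau$ while $\Delta_\nu$ acts in the $y$ variable, the commutator $[\omega,\Delta_\nu]\partial_y^2v$ vanishes identically, so the Bony/paraproduct estimates you invoke (and the second Gr\"onwall absorption of the $\|v\|_{H^{s+1}}$ term they generate) are unnecessary here; they are the additional ingredient of \cite{C-L} only when $\omega$ depends on $y$ as well.

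The substantive gap is in how you use the hypothesis. The proposition assumes the \emph{integral} condition \eqref{est:LL_1}, not a pointwise log-Lipschitz bound, so the pointwise estimates $|\omega(\tau)-\omega_\nu(\tau)|\lesssim|\omega|_{LL}\,2^{-\nu}(\nu+1)$ and $|\partial_\tau\omega_\nu(\tau)|\lesssim|\omega|_{LL}(\nu+1)$, which you justify by the low-frequency characterization of the pointwise class $LL$ of subsection \ref{ss:coeff}, are simply not available. The repair is to notice that in your differential inequality these quantities occur only as $\tau$-dependent factors multiplying $E_\nu$, so Gr\"onwall needs only their $L^1_\tau$ norms, and \eqref{est:LL_1} yields precisely $\int_0^{\mc{T}}|\partial_\tau\omega_\nu|\,d\tau\lesssim|\omega|_{LL}(\nu+1)$ and $2^\nu\int_0^{\mc{T}}|\omega-\omega_\nu|\,d\tau\lesssim|\omega|_{LL}(\nu+1)$. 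This is not merely cosmetic: with your pointwise bounds the exponential factor is $e^{C|\omega|_{LL}(\nu+1)\tau}$ and the resulting $\beta$ is proportional to $|\omega|_{LL}\,\mc{T}$, whereas the statement (and Remarks \ref{r:LL-LZ}-(iv) and \ref{r:loss}) requires $\beta$ to depend only on $\omega_*$, $\omega^*$ and $|\omega|_{LL}$, independently of the time. Using the time-integrated bounds the factor is $e^{C|\omega|_{LL}(\nu+1)}$ uniformly for $\tau\in[0,\mc{T}]$, which gives $\beta\sim|\omega|_{LL}/\omega_*$ as in Remark \ref{r:loss}. With this correction (and the vanishing of the commutator) your argument goes through and coincides with the proof the paper refers to.
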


Let us also recall another result, whose proof can be found in paper \cite{Cic-C}. It stems that any intermediate modulus of continuity between the Lipschitz and the log-Lipschitz ones always entails a loss of derivatives in the energy estimates. The necessity of such a loss was proved by construction of a counterexample: we will come back to it in section \ref{s:sharp}.
\begin{prop} \label{p:L-LL}
Let $\mu:[0,1]\ra[0,1]$ continuous and strictly increasing, with $\mu(0)=0$, and suppose that $\omega$ fulfills
$$
\sup_{\tau\in\R}\bigl|\omega(\tau+\sigma)\,-\,\omega(\tau)\bigr|\,\leq\,K\,|\sigma|\,\log\left(1+\frac{1}{|\sigma|}\right)\,\mu(|\sigma|)
$$
for some constant $K>0$, for any $|\sigma|<1$.

Then, for any $\delta>0$ and any $s\in\R$, there is a constant $C$ (depending on $s$, $\delta$ and on $\omega_*$ and $\omega^*$), such that
\begin{equation} \label{est:en_L-LL} 
\left\|v(\tau)\right\|_{H^{s-\delta}}+\left\|\d_\tau v(\tau)\right\|_{H^{s-1-\delta}}\,\leq\,C\left(\left\|v(0)\right\|_{H^{s}}+\left\|\d_\tau v(0)\right\|_{H^{s-1}}+\int^\tau_0\left\|Lv(\tau')\right\|_{H^{s-1-\delta}}d\tau'\right)
\end{equation}
for any $\tau\in[0,\mc{T}]$ and for all $v\in\mc{C}^2([0,\mc{T}];H^\infty(\R_y))$.
\end{prop}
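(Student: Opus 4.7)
\medbreak
\noindent\textbf{Proof proposal for Proposition \ref{p:L-LL}.}

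The natural strategy is an approximation/regularization scheme combined with a Littlewood--Paley analysis, in the spirit of the classical Colombini--De~Giorgi--Spagnolo argument already used to obtain Proposition \ref{p:LL}. First I would mollify the coefficient at a frequency-dependent scale: choose a standard even mollifier $\rho_\varepsilon$, set $\omega_\varepsilon:=\omega*\rho_\varepsilon$, and exploit the Zygmund-like way the modulus of continuity interacts with a smooth convolution (use that $\int\rho_\varepsilon=1$ and $\int\sigma\rho_\varepsilon(\sigma)\,d\sigma=0$) to derive
\[
\|\omega-\omega_\varepsilon\|_{L^\infty}\,\lesssim\,K\,\varepsilon\,\log\!\left(1+\tfrac{1}{\varepsilon}\right)\mu(\varepsilon)\,,\qquad
\|\partial_\tau\omega_\varepsilon\|_{L^\infty}\,\lesssim\,K\,\log\!\left(1+\tfrac{1}{\varepsilon}\right)\mu(\varepsilon)\,.
\]
The decisive point with respect to the pure log-Lipschitz case is the extra factor $\mu(\varepsilon)\to 0$ as $\varepsilon\to 0$: this is exactly the margin that will let us make the loss of derivatives arbitrarily small.

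Then I would localize in frequency via the dyadic blocks $v_\nu:=\Delta_\nu v$, which satisfy $\partial_\tau^2 v_\nu-\omega(\tau)\partial_y^2 v_\nu=\Delta_\nu Lv+[\omega,\Delta_\nu]\partial_y^2 v$. For each $\nu\geq-1$ I would set the localized, $s$-weighted approximate energy
\[
E_\nu(\tau)\,:=\,2^{2\nu s}\int_{\R}\Bigl(\omega_{\varepsilon_\nu}(\tau)\,|\partial_\tau v_\nu|^2\,+\,|\partial_y v_\nu|^2\Bigr)\,dy\,,\qquad \varepsilon_\nu\,=\,2^{-\nu},
\]
and differentiate in $\tau$. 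The dangerous term is $\int\partial_\tau\omega_{\varepsilon_\nu}|\partial_\tau v_\nu|^2$, controlled by $C\nu\,\mu(2^{-\nu})\,E_\nu$. The replacement of $\omega$ by $\omega_{\varepsilon_\nu}$ in the equation produces an error $\lesssim \|\omega-\omega_{\varepsilon_\nu}\|_{L^\infty}2^{2\nu}\|v_\nu\|_{L^2}^2\lesssim\nu\,\mu(2^{-\nu})\,2^\nu\|v_\nu\|_{L^2}^2$, which, by Bernstein's lemma and $\partial_y v_\nu\sim 2^\nu v_\nu$, is also absorbed in $\nu\,\mu(2^{-\nu})\,E_\nu$; the commutator $[\omega,\Delta_\nu]\partial_y^2 v$, estimated via Bony's decomposition and Theorem \ref{t:op}, yields an $\ell^2$-summable remainder that I would handle as a source term. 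The outcome is a Gronwall-type inequality
\[
\frac{d}{d\tau}\sqrt{E_\nu(\tau)}\,\leq\,C\,\nu\,\mu(2^{-\nu})\,\sqrt{E_\nu(\tau)}\,+\,2^{\nu(s-1)}\|\Delta_\nu Lv(\tau)\|_{L^2}\,+\,R_\nu(\tau),
\]
with $(R_\nu)_\nu\in\ell^2$ controlled by the previous energy norms.

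Integrating gives $\sqrt{E_\nu(\tau)}\lesssim e^{C\nu\mu(2^{-\nu})\tau}\bigl(\sqrt{E_\nu(0)}+\int_0^\tau\dots\bigr)$. The key point is that for any prescribed $\delta>0$, since $\mu(2^{-\nu})\to 0$, there exists $\nu_0=\nu_0(\delta,\mathcal{T},K)$ such that $C\mu(2^{-\nu})\mathcal{T}\leq\delta\log 2$ for $\nu\geq\nu_0$, and consequently $e^{C\nu\mu(2^{-\nu})\tau}\leq 2^{\nu\delta}$; for $\nu<\nu_0$ one has a finite uniform bound. Squaring and summing $2^{-2\nu\delta}E_\nu(\tau)$ over $\nu$ I then obtain the $H^{s-\delta}\times H^{s-1-\delta}$ norm of $(v(\tau),\partial_\tau v(\tau))$ bounded by the $H^s\times H^{s-1}$ norm of the Cauchy data plus the $L^1_\tau H^{s-1-\delta}$ norm of $Lv$, which is precisely \eqref{est:en_L-LL}.

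The main obstacle, as always in these Colombini-type arguments, is the careful bookkeeping in the commutator and approximation errors: one has to verify that every single error piece generated by replacing $\omega$ with $\omega_{\varepsilon_\nu}$ and by localizing through $\Delta_\nu$ is summable with an $\ell^2$ weight compatible with the target regularity, and that no hidden term scales like a full $\nu$ (which would prevent the loss from being arbitrarily small). Once one checks that every bad contribution carries the extra factor $\mu(2^{-\nu})$, the conclusion follows because this factor tends to zero, which is the structural difference with the purely log-Lipschitz setting of Proposition \ref{p:LL}.
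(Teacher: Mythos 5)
The paper never proves Proposition \ref{p:L-LL}: it is quoted verbatim from Cicognani--Colombini \cite{Cic-C}, so there is no internal proof to compare with. Your strategy --- mollify $\omega$ at a frequency-dependent scale $\varepsilon_\nu=2^{-\nu}$, localize dyadically, and run an approximate-energy/Gronwall argument in which every dangerous term carries the extra factor $\mu(2^{-\nu})\to0$, so that the amplification $e^{C\nu\mu(2^{-\nu})\tau}$ is eventually dominated by $2^{\nu\delta}$ --- is exactly the Colombini--De Giorgi--Spagnolo mechanism that \cite{Cic-C} implements, and it is sound. Two remarks simplify and correct the bookkeeping. First, since $\omega=\omega(\tau)$ only and $\Delta_\nu$ acts in the $y$ variable, the commutator $[\omega,\Delta_\nu]\d_y^2v$ is identically zero, so the Bony/remainder term $R_\nu$ you introduce is vacuous; one may as well take the Fourier transform in $y$ and work with the ODE in $\tau$ at fixed frequency $\xi$, regularizing $\omega$ at scale $|\xi|^{-1}$, which is how the cited papers proceed. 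Second, the weight in $E_\nu$ should be $2^{2\nu(s-1)}$ rather than $2^{2\nu s}$ if the sum is to reconstruct $\left\|v\right\|_{H^{s}}+\left\|\d_\tau v\right\|_{H^{s-1}}$-type quantities; this is consistent with the factor $2^{\nu(s-1)}$ you then use in the Gronwall line, so it is only a normalization slip. (Also, the vanishing first moment of the mollifier is not needed here: the hypothesis is a first-difference modulus, not a Zygmund one.)

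The one step that does not come out as you claim is the norm on the source term. Duhamel at fixed $\nu$ gives
$$
2^{-\nu\delta}\sqrt{E_\nu(\tau)}\,\leq\,C\left(\sqrt{E_\nu(0)}\,+\,\int_0^\tau 2^{\nu(s-1)}\,\bigl\|\Delta_\nu Lv(\tau')\bigr\|_{L^2}\,d\tau'\right),
$$
because the factor $e^{c_\nu(\tau-\tau')}\leq C\,2^{\nu\delta}$ can only be spent once; after summation the source is therefore measured in $H^{s-1}$ (or, keeping time-dependent weights, in $H^{s-1-\delta\tau'/\mc{T}}$ at time $\tau'$), not uniformly in $H^{s-1-\delta}$ as you assert. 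Charging the full loss $\delta$ to a source acting near $\tau'=0$ would amount to a zero-loss propagation estimate for the data it creates, which is precisely what fails in this class; accordingly, the sharp statements in the literature (see e.g. \cite{C-L}) carry a time-dependent loss inside the time integral. For the use made of Proposition \ref{p:L-LL} in this paper the point is immaterial, since the sidewise argument applies it with $Lv=0$; but if you want an inequality literally of the form \eqref{est:en_L-LL} you should either keep the source in $H^{s-1}$ or state the time-dependent version.
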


Now, let us focus on the second variation of the coefficients. The following result was stated in \cite{Tar} for integral conditions. See also works \cite{C-DS}, \cite{C-DS-F-M_th} and
\cite{C-DS-F-M_wp} for some generalizations.

\begin{prop} \label{p:Z-LZ}
Fix $s\in\R$, and let $v\in\mc{C}^2([0,\mc{T}];H^\infty(\R_y))$.
\begin{itemize}
\item[(i)] If $\omega$ verifies the integral inequality \eqref{est:Z_1}, then
\begin{equation} \label{est:en_Z} 
\left\|v(\tau)\right\|_{H^{s}}\,+\,\left\|\d_\tau v(\tau)\right\|_{H^{s-1}}\,\leq\,C\left(\left\|v(0)\right\|_{H^s}\,+\,\left\|\d_\tau v(0)\right\|_{H^{s-1}}\,+\,\int^\tau_0\left\|Lv(\tau')\right\|_{H^{s}}\,d\tau'\right),
\end{equation}
for a constant $C$ which depends only on $s$, $\omega_*$, $\omega^*$ and $|\omega|_Z$.
\item[(ii)] If $\omega$ fulfills instead \eqref{est:LZ_1}, then an estimate analogous to \eqref{est:LL} holds true, for positive constants $\beta$ (depending only on $\omega_*$, $\omega^*$ and on $|\omega|_{LZ}$) and $C$ (depending also on $s$, but not on $|\omega|_{LZ}$).
\end{itemize}
\end{prop}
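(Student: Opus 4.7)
\textbf{Proof proposal for Proposition \ref{p:Z-LZ}.}

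The plan is to follow the Colombini--De Giorgi--Spagnolo/Tarama strategy: decompose the solution dyadically, build a modified approximate energy for each Littlewood--Paley block using a smoothed version of $\omega$, differentiate in time and exploit the Zygmund (resp.\ log-Zygmund) hypothesis to control the commutator terms with no (resp.\ polynomial) gain in frequency. The Zygmund regularity is not used in the form of a pointwise bound on $\omega'$, but in the form of Proposition \ref{p:zygm_p}, which identifies $\mc{Z}_1$ and $\mc{LZ}_1$ with $B^1_{1,\infty}$ and $B^{1-\log}_{1,\infty}$ respectively; this will give $L^\infty$ bounds on the regularization $\omega_\nu$ and its derivatives at each frequency scale $2^\nu$, with only one degree of loss rather than two.

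First I would set $v_\nu:=\Delta_\nu v$ and apply $\Delta_\nu$ to the equation, yielding $\d_\tau^2 v_\nu - \omega(\tau)\,\d_y^2 v_\nu = \Delta_\nu Lv + [\omega,\Delta_\nu]\d_y^2 v$. Then, following Tarama, I would replace $\omega$ by a regularized coefficient $\omega_\nu:=S_\mu\omega$ with $\mu=\mu(\nu)$ to be tuned, and introduce the approximate energy
\begin{equation*}
E_\nu(\tau)\,:=\,\int\Bigl(|\d_\tau v_\nu|^2\,+\,\omega_\nu\,|\d_y v_\nu|^2\,+\,2^{2\nu}\,|v_\nu|^2\,-\,\frac{\d_\tau\omega_\nu}{2^{2\nu}}\,\Re(v_\nu\,\overline{\d_\tau v_\nu})\Bigr)\,dy\,.
\end{equation*}
The key observation (Tarama's trick) is that the last correction term is designed so that, upon differentiating $E_\nu$ in $\tau$, the worst contribution $\d_\tau\omega_\nu\,|\d_y v_\nu|^2$ is replaced by a term involving $\d_\tau^2\omega_\nu$. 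For the Zygmund case, the bound
\begin{equation*}
\|\d_\tau^2\omega_\nu\|_{L^\infty}\,\lesssim\,2^{\mu}\,|\omega|_{Z}
\end{equation*}
(which comes from Bernstein's inequality and the $B^1_{\infty,\infty}$-type control inherited via Proposition \ref{p:zygm_p} from $\mc{Z}_1$) provides exactly the gain of one derivative that, after choosing $\mu=\nu$, makes the time derivative of $E_\nu$ bounded by $E_\nu$ itself plus the forcing, with no logarithm: applying Gronwall on $[0,\tau]$ then yields an estimate on $E_\nu(\tau)$ uniform in $\nu$. Summing $2^{2(s-1)\nu}E_\nu(\tau)$ over $\nu\geq-1$ reconstructs the $H^s\times H^{s-1}$ norm of $(v,\d_\tau v)$ and produces \eqref{est:en_Z}. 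The commutators $[\omega,\Delta_\nu]\d_y^2 v$ and the error $(\omega-\omega_\nu)\d_y^2 v_\nu$ are handled by standard Bony paraproduct estimates (Theorem \ref{t:op} and Corollary \ref{c:op}) together with the dyadic characterization of $\mc{Z}_1$, and fit into the right-hand side without loss.

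For the log-Zygmund case (ii), the same scheme is carried out with $\omega\in\mc{LZ}_1=B^{1-\log}_{1,\infty}$. The bound on $\d_\tau^2\omega_\nu$ now acquires an extra factor $(\nu+1)$, which, after Gronwall, translates into the energy estimate a factor $e^{C(\nu+1)\tau}=2^{C'\tau(\nu+1)}$. This amounts to a shift of order $\beta=\beta(\mc{T},|\omega|_{LZ},\omega_*,\omega^*)$ in the Sobolev exponent, yielding the analogue of \eqref{est:en_LL}. The main obstacle I anticipate is the careful bookkeeping in the differentiation of $E_\nu$: one needs that the Tarama correction produces cancellation of the $\d_\tau\omega_\nu\,|\d_y v_\nu|^2$ term modulo harmless remainders, and that the resulting $\d_\tau^2\omega_\nu$ factor is matched against $2^{2\nu}$ at the right rate; getting the constants right so that the Zygmund case really has $\beta=0$ (and not a spurious logarithmic loss) is the delicate point and is exactly where Tarama's second-order approximation, rather than a naive Paley-Littlewood approach, is essential.
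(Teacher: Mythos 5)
First, a point of comparison: the paper does not prove Proposition \ref{p:Z-LZ} at all — it is quoted from Tarama \cite{Tar} (see also \cite{C-DS} for the log-Zygmund case) — so the benchmark is Tarama's argument, which is indeed the strategy you outline: regularize the coefficient at a scale tied to the frequency and add a corrector to the energy so that only second derivatives of the regularized coefficient appear. However, two of your key steps fail as written. The corrector is mis-normalized: differentiating your $E_\nu$ and using the equation, the contribution of $-\frac{\d_\tau\omega_\nu}{2^{2\nu}}\Re(v_\nu\overline{\d_\tau v_\nu})$ that contains $\d_\tau^2v_\nu$ produces, after integration by parts, a term of size $\frac{\omega}{2^{2\nu}}\,\d_\tau\omega_\nu\,\|\d_yv_\nu\|^2_{L^2}$, which is smaller than the bad term $\d_\tau\omega_\nu\|\d_yv_\nu\|^2_{L^2}$ by the factor $\omega\,2^{-2\nu}$: no cancellation takes place, your energy behaves exactly like the naive Colombini--De Giorgi--Spagnolo one of \cite{C-DG-S}, and a loss survives even in the Zygmund case. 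The cancellation requires the coefficient $\d_\tau\omega_\nu/\omega_\nu$, i.e. Tarama's completed square $\bigl|\d_\tau v_\nu\mp\frac{\d_\tau\omega_\nu}{2\omega_\nu}v_\nu\bigr|^2$ (the sign fixed by the computation), so that the surviving terms involve $\d_\tau\bigl(\frac{\d_\tau\omega_\nu}{2\omega_\nu}\bigr)$, hence $\d_\tau^2\omega_\nu$ and $(\d_\tau\omega_\nu)^2$, paired with $v_\nu$ rather than with $\d_yv_\nu$.

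Second, the bound $\|\d_\tau^2\omega_\nu\|_{L^\infty}\lesssim 2^{\mu}|\omega|_Z$ is false under the integral hypothesis \eqref{est:Z_1}: Proposition \ref{p:zygm_p} gives $\omega\in B^1_{1,\infty}$, so $\|\Delta_k\omega\|_{L^1}\lesssim2^{-k}$, and passing to $L^\infty$ by Bernstein costs an extra $2^k$, leaving only $\|\d_\tau^2S_\mu\omega\|_{L^\infty}\lesssim2^{2\mu}$ — a loss of two powers, not one. What is true, and what the argument actually needs (since $\omega$ depends only on $\tau$, the differential inequality for $E_\nu$ has time-dependent coefficients and Gronwall requires only their integrals over $[0,\mc{T}]$), are $L^1$-in-time bounds: $\|\d_\tau^2S_\mu\omega\|_{L^1}\lesssim2^{\mu}|\omega|_Z$, $\|(\d_\tau S_\mu\omega)^2\|_{L^1}\lesssim2^{\mu}$, $\|\omega-S_\mu\omega\|_{L^1}\lesssim2^{-\mu}$, $\|\d_\tau S_\mu\omega\|_{L^1}\lesssim(\mu+1)$; with $\mu=\nu$ these give a frequency-uniform Gronwall constant in case (i), and in case (ii) the extra factor $(\nu+1)$ which yields the fixed shift $\beta$ of the statement. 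Finally, note that since $\omega$ depends only on $\tau$ while $\Delta_\nu$ acts in $y$, the commutator $[\omega,\Delta_\nu]\d_y^2v$ you propose to control by paraproducts is identically zero: the only error term is $(\omega-\omega_\nu)\d_y^2v_\nu$, and in fact one may dispense with Littlewood--Paley blocks altogether and run the whole argument on the Fourier transform in $y$, frequency by frequency, which is precisely what \cite{Tar} does.
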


We expect an analogous statement to hold true also for intermediate conditions between the Zygmund and the log-Zygmund one. However, this goes beyond the scope of the present paper, and it will be matter of next studies.

\begin{rem} \label{r:loss}
Going along the lines of the proofs to Propositions \ref{p:LL} and \ref{p:Z-LZ} (and assuming, without loss of generality, that $\omega_*<1$), it is
 possible to see that the constant $\beta$ occurring in \eqref{est:en_LL} is, respectively,
$$
\beta_{LL}\,=\,\wtilde{\beta}\;|\omega|_{LL}\;\frac{1}{\omega_*}\qquad\mbox{ and }\qquad
\beta_{LZ}\,=\,\wtilde{\beta}\;|\omega|_{LZ}\;\frac{1}{\omega^2_*}\,,
$$
for some ``universal'' constant $\wtilde{\beta}>0$.
\end{rem}

\section{Proof of the results} \label{s:proof}

Let us now start the proof of our main results. It will be based on genuinely one-dimensional arguments, following the main ideas of \cite{FC-Z} (see also \cite{Z-1993}): in particular, we will apply the \emph{sidewise energy estimates} technique. It consists in changing the role of the variables, seeing $x$ as the new evolution parameter, and in establishing bounds for the energy associated to the ``new'' equation.

We will focus on the proof of Theorem \ref{th:LL-LZ}. Indeed, the proof of Theorem \ref{th:Z} can be obtained with analogous computations: so, we will just point out the main features in order to get it.

We remark once again that it is possible to use the results of subsection \ref{ss:cauchy} due to the finite propagation speed issue (proved in \cite{C-DG-S} under more general assumptions).


\subsection{Time evolution}  \label{ss:E}

As, under both hypotheses of Theorems \ref{th:Z} and \ref{th:LL-LZ}, $\omega$ is measurable and bounded and we have finite propagation speed, we get (see \cite{H-S}) the existence and uniqueness of a solution $u$ to the problem \eqref{eq:we}, such that
$$
u\,\in\,\mc{C}([0,T];H^1_0(\Omega))\,\cap\,\mc{C}^1([0,T];L^2(\Omega))
$$
and for which we have the conservation of the ``classical'' energy:
$$
E_0(t)\,:=\,\frac{1}{2}\,\int_\Omega\left(\omega(x)\,\left|\d_tu(t,x)\right|^2\,+\,\left|\d_xu(t,x)\right|^2\right)dx\,.
$$
As a matter of fact, an easy computation gives $E'_0(t)\equiv0$.

Actually, due to the time reversibility of our equation, we can consider also the evolution for negative times, for which we still have conservation of energy. In particular, there exists a constant $C>0$ such that, for any $T^*>0$, then
\begin{equation} \label{est:past-future}
\sup_{[-T^*,T^*]}\left(\int_\Omega\left(\omega(x)\,\left|\d_tu(t,x)\right|^2\,+\,\left|\d_xu(t,x)\right|^2\right)dx\right)\,\leq\,C\,\left(\|u_0\|^2_{H^1_0(\Omega)}\,+\,\|u_1\|^2_{L^2(\Omega)}\right).
\end{equation}

Let us now focus on the integral Zygmund instance for a while. Under the hypothesis of Theorem \ref{th:Z}, as explained in Remark \ref{r:Z}-(i), it makes sense to consider the trace of $\d_xu$ at any point $x\in\Omega$.

Then, with the terminology we will introduce in the next subsection, we can consider the ``sidewise'' problem: we invert equation \eqref{eq:we} and we look at the evolution with respect to $x$. Thanks to space reversibility, applying Proposition \ref{p:Z-LZ} with the slice $(\d_tu,\d_xu)_{|x}$ as new initial data (and using again finite propagation speed), for any $x\in\Omega$ we find
\begin{eqnarray}
& & \sup_{y\in[0,x]}\left(\int_{-T_\omega y}^{T+T_\omega y}\left(\left|\d_tu(t,y)\right|^2\,+\,\left|\d_xu(t,y)\right|^2\right)dt\right)\,\leq \label{est:sidewise_en} \\
& & \qquad\qquad\qquad\qquad\qquad\qquad\qquad\qquad
\leq\,C\,\int_{-T_\omega x}^{T+T_\omega x}\left(\omega(x)\left|\d_tu(t,x)\right|^2\,+\,\left|\d_xu(t,x)\right|^2\right)dt\,. \nonumber
\end{eqnarray}
In particular, the previous relation is true when we compute the left-hand side at $y=0$:
$$
\int_{0}^{T}\left|\d_xu(t,0)\right|^2\,dt\,\leq\,C\,\int_{-T_\omega x}^{T+T_\omega x}\left(\omega(x)\left|\d_tu(t,x)\right|^2\,+\,\left|\d_xu(t,x)\right|^2\right)dt
$$
for any $x\in\Omega$.
Note that, a priori, the right-hand side might be equal to $+\infty$. However, by \eqref{est:sidewise_en} we see that either it is $+\infty$ on an interval $[x_0,1]$, for some $0<x_0<1$, or just in the extreme point $x=1$. The former case is excluded by what we are going to say in a while, and reversing this argument (and essentially thanks to the freedom in choosing $T^*$ in \eqref{est:past-future} ) shows that also the latter is impossible.

So, let us integrate the last relation with respect to $x$: we get
\begin{equation} \label{est:boundary_en}
\int_{0}^{T}\left|\d_xu(t,0)\right|^2\,dt\,\leq\,C\,\int_{-T_\omega}^{T+T_\omega}E(t)\,dt\,\leq\,C\,(T+2T_\omega)\,\left(\|u_0\|^2_{H^1_0(\Omega)}\,+\,\|u_1\|^2_{L^2(\Omega)}\right)\,,
\end{equation}
which tells us also that the right-hand side of \eqref{est:obs_Z} is finite.

\medbreak
We now focus on the case when $\omega$ is in the integral log-Lipschitz or log-Zygmund class, as in hypothesis of Theorem \ref{th:LL-LZ}.
In this instance, we need to deal with higher order time derivatives.

Let us start with a lemma.
\begin{lemma} \label{l:en_E}
For all integer $k\geq0$, define the quantity
\begin{equation} \label{eq:E_s}
E_k(t)\,:=\,\frac{1}{2}\,\int_\Omega\left(\omega(x)\,\left|\d^{k+1}_tu(t,x)\right|^2\,+\,\left|\d^k_t\d_xu(t,x)\right|^2\right)dx\,.
\end{equation}

Then $E_k$ is conserved in the time evolution: for all $t\in[0,T]$,
\begin{equation} \label{eq:conserv_E}
E_k(t)\,\equiv\,E_k(0)\,=\,\frac{1}{2}\,\int_\Omega\left(\omega(x)\,\left|\d_t^{k+1}u(0,x)\right|^2\,+\,\left|\d_t^k\d_xu(0,x)\right|^2\right)dx\,.
\end{equation}
\end{lemma}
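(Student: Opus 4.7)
The plan is to exploit the fact that the coefficient $\omega$ depends only on $x$: differentiating equation \eqref{eq:we} $k$ times in $t$ commutes with the spatial operator, and the boundary conditions survive time differentiation. This will reduce the statement to the standard energy conservation for $v:=\d_t^k u$.

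More precisely, first I would set $v(t,x):=\d_t^k u(t,x)$ and observe that, since $\omega=\omega(x)$ is time-independent, differentiating the equation $\omega\,\d_t^2 u-\d_x^2u=0$ exactly $k$ times in $t$ yields
$$
\omega(x)\,\d_t^2 v\,-\,\d_x^2 v\,=\,0\qquad\text{in }\;\Omega\times\,]0,T[.
$$
Moreover, the Dirichlet condition $u(t,0)=u(t,1)=0$ holds for every $t\in[0,T]$, so differentiating in $t$ preserves it: $v(t,0)=v(t,1)=0$. Hence $v$ solves the same mixed problem as $u$, merely with different (smooth) data.

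Next I would recognise that $E_k(t)$ is precisely the classical energy $E_0$ associated to $v$, namely
$$
E_k(t)\,=\,\frac{1}{2}\int_\Omega\Bigl(\omega(x)\,|\d_tv(t,x)|^2\,+\,|\d_xv(t,x)|^2\Bigr)\,dx\,.
$$
Working with smooth data (as allowed by the reduction discussed before the statement), I can legitimately differentiate under the integral sign:
$$
E_k'(t)\,=\,\int_\Omega\Bigl(\omega(x)\,\d_tv\,\d_t^2v\,+\,\d_xv\,\d_t\d_xv\Bigr)\,dx\,.
$$
Integrating the second term by parts in $x$, the boundary contributions $\bigl[\d_xv\,\d_tv\bigr]_0^1$ vanish because $v(t,0)=v(t,1)=0$ implies $\d_tv(t,0)=\d_tv(t,1)=0$, and we are left with
$$
E_k'(t)\,=\,\int_\Omega\d_tv\,\bigl(\omega\,\d_t^2 v\,-\,\d_x^2 v\bigr)\,dx\,=\,0\,,
$$
thanks to the equation satisfied by $v$. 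Integration from $0$ to $t$ yields $E_k(t)\equiv E_k(0)$, which is \eqref{eq:conserv_E}.

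The only mildly delicate point is the regularity needed to carry out these manipulations: one must be sure that $v=\d_t^k u$ admits the $x$-derivatives and time derivatives appearing above, and that $\d_x v$ has a trace at the endpoints. As already indicated before the statement, it is enough to establish the identity for smooth data, and then extend it to the class of data considered in Theorem \ref{th:LL-LZ} by a density argument based on a priori bounds in the relevant norms; this density step is the only nontrivial ingredient, and it is the same one used to justify the sidewise estimates that follow.
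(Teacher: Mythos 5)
Your proof is correct and follows essentially the same route as the paper: apply $\d_t^k$ to the equation, observe that the homogeneous Dirichlet conditions are preserved, and compute $E_k'(t)$ using the equation (the paper states exactly this, leaving the integration by parts implicit). The additional remarks on regularity and density are consistent with the reduction to smooth data already made in the paper.
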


\begin{proof}
We apply operator $\d_t^k$ to \eqref{eq:we} and we get the equation
$$
\d^{k+2}_tu\,-\,\omega(x)\,\d_x\d_t^{k}u\,=\,0\,.
$$
Note that the null boundary conditions are still fulfilled. Now, it is
 just matter of computing $E_k'(t)$ and use the previous relation.
\end{proof}

Notice that, thanks to \eqref{eq:hyp}, we have
$$
E_k(t)\,\sim\,\left\|\d_t^{k+1}u(t)\right\|^2_{L^2(\Omega)}\,+\,\left\|\d_t^k\d_xu(t)\right\|^2_{L^2(\Omega)}\,.
$$

For simplicity, for the time being, let us restrict to $m=1$ in Theorem \ref{th:LL-LZ}, which corresponds to the case $0<\beta<1$ in Propositions \ref{p:LL} or \ref{p:Z-LZ}, or to the case of Proposition \ref{p:L-LL}.
If we define $v_1:=\d_tu$, then it satisfies the following system:
\begin{equation} \label{eq:v_1}
\left\{\begin{array}{l}
\omega(x)\,\d_t^2v_1\,-\,\d_x^2v_1\,=\,0 \\[1ex]
v_{1\,|t=0}\,=\,u_1\,,\quad \d_tv_{1\,|t=0}\,=\,D_\omega u_0\,.
\end{array}
\right.
\end{equation}
By hypothesis, we have $v_{1\,|t=0}\in H^1(\Omega)$ and $\d_tv_{1\,|t=0}\in L^2(\Omega)$; then
$$
v_1\,\in\,L^\infty([0,T];H^1(\Omega))\,\cap\,W^{1,\infty}([0,T];L^2(\Omega))\,.
$$
From this we infer (as done in the case without loss) that the trace of $\d_xv_1=\d_t\d_xu$ at any point $x\in\Omega$ is well defined. Moreover, by Propositions \ref{p:LL} or \ref{p:Z-LZ} and arguing as before, we find
$$
\int^T_0\bigl|\d_xv_1(t,0)\bigr|^2\,dt\,\leq\,C\,\int_{-T_\omega x}^{T+T_\omega x}\left(\omega(x)\,\bigl|\d_t^2v_1(t,x)\bigr|^2\,+\,\bigl|\d_t\d_xv_1(t,x)\bigr|^2\right)dt
$$
for any $x\in\Omega$. Integrating over $\Omega$ this last relation and using Lemma \ref{l:en_E} above with $k=2$, we end up with the inequality
$$
\int^T_0\bigl|\d_t\d_xu(t,0)\bigr|^2\,dt\,\leq\,C(T,T_\omega)\,\int_{\Omega}\left(\omega(x)\,\bigl|\d_t^2v_1(0,x)\bigr|^2\,+\,\bigl|\d_t\d_xv_1(0,x)\bigr|^2\right)dx\,.
$$
This relation tells us that the right-hand side of observability estimate \eqref{est:obs_log} (with $m=1$) is finite. As a matter of fact, it is enough to notice that \eqref{eq:v_1} implies
$$
\d_t^2v_{1\,|t=0}\,=\,D_\omega u_1\qquad\mbox{ and }\qquad
\d_t\d_xv_{1\,|t=0}\,=\,\d_xD_\omega u_0\,,
$$
and these quantities belong to $L^2(\Omega)$ due to our hypothesis.

In the general instance $m>1$ it is just a matter of arguing by induction.

So, let us define $v_k:=\d_t^ku$ for any $k\geq0$. From system \eqref{eq:v_1} and the one for $v_2$,
$$
\left\{\begin{array}{l}
\omega(x)\,\d_t^2v_2\,-\,\d_x^2v_2\,=\,0 \\[1ex]
v_{2\,|t=0}\,=\,D_\omega u_0\,,\quad \d_tv_{2\,|t=0}\,=\,D_\omega u_1\,,
\end{array}
\right.
$$
it easily follows by induction that, for any $k\geq0$, the odd time derivatives fulfill
$$
\left\{\begin{array}{l}
\omega(x)\,\d_t^2v_{2k+1}\,-\,\d_x^2v_{2k+1}\,=\,0 \\[1ex]
v_{2k+1\,|t=0}\,=\,D_\omega^{k}u_1\,,\quad \d_tv_{2k+1\,|t=0}\,=\,D_\omega^{k+1} u_0
\end{array}\right.\,,
$$
while the even time derivatives satisfy the system
$$
\left\{\begin{array}{l}
\omega(x)\,\d_t^2v_{2k}\,-\,\d_x^2v_{2k}\,=\,0 \\[1ex]
v_{2k\,|t=0}\,=\,D_\omega^{k}u_0\,,\quad \d_tv_{2k+1\,|t=0}\,=\,D_\omega^{k} u_1\,.
\end{array}\right.
$$

Now, in order to prove that the quantity in the right-hand side of inequality \eqref{est:obs_log} is well-defined, we have to analyse the equation for $v_m$, which will be one of the previous two ones depending if $m$ is either odd or even.
Thanks to the assumptions of Theorem \ref{th:LL-LZ} (recall in particular Remark \ref{r:LL-LZ}-(iii) ), we find that
$$
v_m\,=\,\d_t^mu\,\in\;L^\infty([0,T];H^1(\Omega))\,\cap\,W^{1,\infty}([0,T];L^2(\Omega))\,,
$$
so the trace of $\d_xv_m$ at any point $x\in\Omega$ is well-defined. To prove that it is in $L^2(0,T)$, i.e. that the right-hand side of \eqref{est:obs_log} is finite, we have to apply Proposition \ref{p:LL} or Proposition \ref{p:Z-LZ} (recall that $m=[\beta]+1$): we get
$$
\int_0^T\bigl|\d_t^m\d_xu(t,0)\bigr|\,dt\,\leq\,C\,(T+2T_\omega)\,E_{2m}(0)\,.
$$
Then, to conclude it is enough to notice that
$$
\d_tv_{2m\,|t=0}\,=\,\d_t^{2m+1}u_{|t=0}\,=\,D_\omega^{m}u_1\qquad\mbox{ and }\qquad
\d_xv_{2m\,|t=0}\,=\,\d_x\d_t^{2m}u_{|t=0}\,=\,\d_xD_\omega^{m}u_0
$$
both belong to $L^2(\Omega)$ thanks to our hypothesis.

\subsection{Sidewise energy estimates} \label{ss:sid}

Now, we apply the ``sidewise energy estimates'' technique. The fundamental observation is that the roles of the time and the space variables are interchangeable in equation \eqref{eq:we}. Hence, we can consider the ``new'' evolution problem
\begin{equation} \label{eq:sidewise}
        \d_x^2u\,-\,\omega(x)\,\d^2_tu\,=\,0\,,
\end{equation}
where this time we are looking at the evolution in $x$.

With this new point of view in mind, for any $k\in\N$ we define the \emph{sidewise $k$-energy} as
\begin{equation} \label{eq:sid_en}
F_k(x)\,:=\,\frac{1}{2}\,\int^{T-T_{\omega}x}_{T_{\omega}x}\left(\omega(x)\,\left|\d_t^{k+1}u(t,x)\right|^2\,+\,\left|\d_t^k\d_xu(t,x)\right|^2\right)dt\,,
\end{equation}
where $T_\omega$ was defined in \eqref{def:vel}. 
Let us immediately note that, thanks to \eqref{eq:hyp}, we have
$$
F_k(0)\,\sim\,\int^{T}_{0}\left|\d_t^k\d_xu(t,0)\right|^2\,dt\,.
$$

Our task is then finding suitable estimates for $F_k$. As the ``new'' equation \eqref{eq:sidewise} reads like \eqref{def:L}, we can use the results of subsection \ref{ss:cauchy}.
At this point, let us focus on the case of loss of derivatives, i.e. $\omega$ fulfilling an integral log-Lipschitz or log-Zygmund hypothesis.

Hence, we apply Propositions \ref{p:LL} and \ref{p:Z-LZ}-(ii) to equation \eqref{eq:sidewise}: from inequality \eqref{est:LL} with $s=1$ and the fact that $u(t,0)\equiv0$, we get
\begin{equation} \label{est:sid_1}
\left\|\d_tu(t,x)\right\|^2_{L^2(0,T)}\,+\,\left\|\d_xu(t,x)\right\|^2_{L^2(0,T)}\,\leq\,C\,\left\|\d_x u(t,0)\right\|^2_{H^\beta(0,T)}\,,
\end{equation}
Let us emphasize that the Sobolev norms which appear in this estimate are in time variable.

Now, we have to remark the following facts. 
\begin{enumerate}
\item $\|\d_xu(\,\cdot\,,0)\|_{H^\beta(0,T)}\leq\|\d_xu(\,\cdot\,,0)\|_{H^{[\beta]+1}(0,T)}$, where $[\beta]$ denotes the biggest integer smaller than or equal to $\beta$: we want to work with local operators, and so we need derivatives of integer order.
\item Let us focus on the term on the left-hand side of relation \eqref{est:sid_1}: by finite propagation speed and  the space reversibility of equation \eqref{eq:sidewise}, we have that, at any fixed point $x$, the previous integral is actually taken over the interval $[T_{\omega}x,T-T_{\omega}x]$:
$$
\left\|\d_tu(\,\cdot\,,x)\right\|^2_{L^2(0,T)}\,+\,\left\|\d_xu(\,\cdot\,,x)\right\|^2_{L^2(0,T)}\,=\,
\int_{T_{\omega}x}^{T-T_{\omega}x}\left(\left|\d_tu(t,x)\right|^2\,+\,\left|\d_xu(t,x)\right|^2\right)dt\,.
$$
\end{enumerate}

 Therefore, recalling also definition \eqref{eq:sid_en} and the hyperbolicity condition \eqref{eq:hyp} for $\omega$, from \eqref{est:sid_1} we infer the estimate
$$
F_0(x)\,\leq\,C\,\sum_{k=0}^{[\beta]+1}F_k(0)\,.
$$

Integrating the previous relation with respect to the space variable, we find
$$
\int_{T_{\omega}}^{T-T_{\omega}}\int_\Omega\left(\omega(x)\left|\d_tu(t,x)\right|^2+\left|\d_xu(t,x)\right|^2\right)dx\,dt\,\leq\,C\,\int^T_0\sum_{k=0}^{[\beta]+1}\left|\d_t^k\d_xu(t,0)\right|^2\,dt\,.
$$
At this point, we use Lemma \ref{l:en_E} for $E_0$: this leads us to
\begin{equation} \label{est:obs_partial}
\left(T\,-\,2\,T_{\omega}\right)\,E_0(0)\,\leq\,C\,\int^T_0\sum_{k=0}^{[\beta]+1}\left|\d_t^k\d_xu(t,0)\right|^2\,dt\,.
\end{equation}
Now, to obtain \eqref{est:obs_log}, it is enough to replace the non-homogeneous Sobolev norm with the homogeneous one, leaving just the highest order term $\d_t^m\d_xu(t,0)$
(see also Remark \ref{r:hom_sob} below).

Theorem \ref{th:LL-LZ} is now completely proved. 

\begin{rem} \label{r:hom_sob}
Let us stress again that it is
 possible to control the right-hand side of \eqref{est:obs_partial}
by the $L^2$-norm of just the $m$-th time derivative of $\d_xu_{|x=0}$.

Indeed, using the fact that the Fourier transform $\mc{F}:L^1\ra L^\infty$ is a continuous map
and that we are dealing with compactly supported (in time) functions, it is
 easy to see that
$$
\left\|\d_xu(\,\cdot\,,0)\right\|_{L^2(0,T)}\,\leq\,C_T\,\left\|\d^m_t\d_xu(\,\cdot\,,0)\right\|_{L^2(0,T)}\,.
$$

However, this relation can be recovered also using a sort of unique continuation property for our equation: as the proof is interesting in itself, let us sketch its main ideas. For simplicity, we will work on the whole $\R_t$, rather than on the interval $[0,T]$: this case can be recovered using finite propagation speed.

More precisely, arguing by contradiction, we are going to show that
\begin{equation} \label{est:low_freq}
\int_{B_t(0,1)}\bigl|\d_xu(t,0)\bigr|^2\,dt\,\leq\,C\,\int_{\R_t}\bigl|\d^m_t\d_xu(t,0)\bigr|^2\,dt
\end{equation}
for $u$ solution of system \eqref{eq:we}, with $m\geq1$.

In fact, suppose that the previous relation is not true; then (arguing in a sidewise manner) it is
 possible to find a sequence $\left(u^n\right)_n\,\subset\,\mc{C}(\Omega;H^1(0,T))$ of solutions to \eqref{eq:we}, such that
$$
\int_{B_t(0,1)}\bigl|\d_xu^n(t,0)\bigr|^2\,dt\,\equiv\,1\qquad\mbox{ and }\qquad\int_{\R_t}\bigl|\d^m_t\d_xu^n(t,0)\bigr|^2\,dt\,\longrightarrow\,0\,.
$$
In particular, from (sidewise) energy estimates we get that $\left(u^n\right)_n$ is a bounded sequence in $H^1$. Then, there exists a $u\in H^1(0,T)$ such that $u^n\rightharpoonup u$ in $L^\infty(\Omega;H^1(0,T))$ and which is still solution to \eqref{eq:we}. By Sobolev embeddings, one gathers strong convergence in $L^2(0,T)$, which entails
\begin{equation} \label{eq:cond_abs}
\int_{B_t(0,1)}\bigl|\d_xu(t,0)\bigr|^2\,dt\,=\,1\qquad\mbox{ and }\qquad\int_{\R_t}\bigl|\d^m_t\d_xu(t,0)\bigr|^2\,dt\,=\,0\,.
\end{equation}
In particular, these relations imply that $u\neq0$, but $\d^m_t\d_xu_{|x=0}=0$.

Define now $v_k\,:=\,\d^k_tu$ for any $0\leq k\leq m$. Applying the operator $\d^m_t$ to \eqref{eq:we} we find that $v_m$ solves the equation
$$
\d_x^2v_m\,-\,\omega(x)\,\d^2_tv_m\,=\,0\,,
$$
with $0$ initial data. By (sidewise) energy estimates, this implies that $v_m=\d^m_tu\equiv0$ over $\Omega\times[0,T]$, and then
$$
u(t,x)\,=\,k_0(x)\,+\,k_1(x)\,t\,+\,\ldots\,+k_{m-1}(x)\,t^{m-1}\,.
$$
From this relation and the boundary conditions $u(t,0)=u(t,1)\equiv0$ for all $t$, we infer that
$k_j(0)=k_j(1)=0$ for all $0\leq j\leq m-1$.

But $u$ has to fulfill equation \eqref{eq:we}: hence,
$$
\sum_{j=0}^{m-1}k_j''(x)\,t^{j}\,=\,\sum_{j=2}^{m-1}j\,(j-1)\,k_j(x)\,t^{j-2}\,.
$$
We now compare the terms of the same order (in $t$): in particular, we find
$$
k_{m-1}''(x)\,=\,0\qquad\Longrightarrow\qquad k_{m-1}(x)\equiv0 \quad\mbox{ in }\;\Omega
$$
(using also that $k_{m-1}(0)=k_{m-1}(1)=0$), and the same for $k_{m-2}$.

Then, it is
 easy to see that $k_j(x)\equiv0$ for all $0\leq j\leq m-1$, and so $u(t,x)\equiv0$, which is in contrast with the first condition in \eqref{eq:cond_abs}.

Inequality \eqref{est:low_freq} is then verified.
\end{rem}

Let us also spend a few words on the proof of Theorem \ref{th:Z}.
\begin{proof}[Proof of Theorem \ref{th:Z}]
To prove inequality \eqref{est:obs_Z}, one can just repeat the same arguments as above, i.e. sidewise energy estimates. But this time it is
 enough to consider the energies $E_0$ and $F_0$, because of the use of estimate \eqref{est:en_Z}, which involves no loss of regularity.
\end{proof}

\medbreak
We conclude the present section stating a result for moduli of continuity slightly better than the log-Lipschitz one. It is a direct consequence of Proposition \ref{p:L-LL} and sidewise energy estimates.
\begin{thm} \label{th:L+}
Let us consider the strictly hyperbolic problem \eqref{eq:we}-\eqref{eq:hyp}, with $T>2T_\omega$. Assume also that $\omega$ satisfies, for any $0<h<1$,
\begin{equation} \label{hyp:L+}
\sup_{x\in\R}\bigl|\omega(x+h)\,-\,\omega(x)\bigr|\,\leq\,K_0\,h\,\log\left(1+\frac{1}{h}\right)\,\mu(h)\,,
\end{equation}
where $\mu:[0,1]\ra[0,1]$ is a continuous strictly increasing  function such that $\mu(0)=0$.

Then, for any fixed $\delta>0$, there exists a constant $C>0$ (depending only on $T$, $\omega_*$, $\omega^*$, $\delta$ and $K_0$) such that the following inequality,
\begin{equation} \label{est:obs_small}
\left\|u_0\right\|^2_{H^1_0(\Omega)}\,+\,\left\|u_1\right\|^2_{L^2(\Omega)}\,\leq\,C\,\bigl\|\d_xu(\,\cdot\,,0)\bigr\|^2_{H^\delta(0,T)}\,,
\end{equation}
holds true for any initial data $u_0\in H^{3}(\Omega)\cap H^1_0(\Omega)$ and $u_1\in H^{2}(\Omega)$ such that
\begin{equation} \label{eq:D^m_omega}
D_\omega u_0\,\in\,H^1(\Omega)\,.
\end{equation}
\end{thm}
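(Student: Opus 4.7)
The strategy is to replicate the structure of the proof of Theorem \ref{th:LL-LZ}, namely to combine the time-evolution analysis of subsection \ref{ss:E} with the sidewise energy estimates of subsection \ref{ss:sid}, but using Proposition \ref{p:L-LL} in place of Propositions \ref{p:LL} and \ref{p:Z-LZ}-(ii). The gain is that Proposition \ref{p:L-LL} supplies, for the intermediate modulus of continuity \eqref{hyp:L+}, an energy estimate with an \emph{arbitrarily small} loss $\delta>0$ rather than a fixed loss $\beta$. This is exactly what is needed to produce \eqref{est:obs_small}.

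First I would check that the right-hand side of \eqref{est:obs_small} is finite. Since we may assume $0<\delta\leq 1$, it suffices to show that $\d_xu(\,\cdot\,,0)\in H^1(0,T)$. Arguing as in subsection \ref{ss:E}, set $v_1:=\d_tu$; then $v_1$ solves \eqref{eq:we} (in the sense made rigorous there) with initial data $(v_1,\d_tv_1)_{|t=0}=(u_1,D_\omega u_0)$, which by hypothesis belongs to $H^1(\Omega)\times L^2(\Omega)$. Hence $v_1$ is a finite-energy solution of the equation with merely bounded coefficient, and by the same argument given in Remark \ref{r:Z}-(i) its spatial trace $\d_xv_1(\,\cdot\,,0)=\d_t\d_xu(\,\cdot\,,0)$ is well defined and, combined with the bound \eqref{est:boundary_en} applied to $v_1$, lies in $L^2(0,T)$. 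Together with the corresponding bound on $\d_xu(\,\cdot\,,0)$ itself (coming from $E_0$), this yields $\d_xu(\,\cdot\,,0)\in H^1(0,T)\hookrightarrow H^\delta(0,T)$.

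Next, fixing $\delta>0$, I would reverse the roles of $t$ and $x$ and treat equation \eqref{eq:sidewise} as an evolution in the variable $x$, with the ``time'' variable being $t$. Since the coefficient $\omega(x)$ satisfies precisely \eqref{hyp:L+}, Proposition \ref{p:L-LL} applies to this sidewise problem with $s=1$ and loss parameter $\delta$. Using the boundary condition $u(t,0)\equiv 0$ to kill the $H^1$ initial contribution at $x=0$, this yields
\begin{equation*}
\left\|\d_t u(\,\cdot\,,x)\right\|_{L^2(0,T)}^2\,+\,\left\|\d_x u(\,\cdot\,,x)\right\|_{L^2(0,T)}^2\,\leq\,C\,\left\|\d_x u(\,\cdot\,,0)\right\|^2_{H^\delta(0,T)}
\end{equation*}
for every $x\in\Omega$, where by finite propagation speed (Theorem 4 of \cite{C-DG-S}) the $L^2$ norms on the left are in fact supported in the truncated interval $[T_\omega x,T-T_\omega x]$, exactly as in the derivation leading to \eqref{est:sid_1}.

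Finally, I would integrate this last inequality with respect to $x$ over $\Omega$ and invoke \eqref{eq:hyp} together with the conservation of $E_0$ (Lemma \ref{l:en_E} with $k=0$) to conclude
\begin{equation*}
(T-2T_\omega)\,E_0(0)\,\leq\,C\,\left\|\d_x u(\,\cdot\,,0)\right\|^2_{H^\delta(0,T)}\,,
\end{equation*}
which, since $T>2T_\omega$ and $E_0(0)\sim\|u_0\|_{H^1_0(\Omega)}^2+\|u_1\|_{L^2(\Omega)}^2$, gives \eqref{est:obs_small}. The only delicate point in the argument is justifying that the sidewise Sobolev trace $\d_xu(\,\cdot\,,0)\in H^\delta(0,T)$ is really well defined under the stated regularity of the data; this is precisely the purpose of the auxiliary regularity hypothesis $D_\omega u_0\in H^1(\Omega)$, which guarantees that $v_1=\d_tu$ has finite energy and therefore admits a meaningful boundary trace of its spatial derivative. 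Everything else is a direct transcription of the sidewise energy argument of subsection \ref{ss:sid}, with $\beta$ replaced by an arbitrary $\delta>0$ coming from Proposition \ref{p:L-LL}.
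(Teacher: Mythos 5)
Your proposal is correct and follows essentially the same route as the paper, which presents Theorem \ref{th:L+} precisely as a direct consequence of Proposition \ref{p:L-LL} combined with the sidewise energy estimates of subsection \ref{ss:sid}, with the arbitrarily small loss $\delta$ playing the role of $\beta$. The only minor imprecision is invoking \eqref{est:boundary_en} (proved under the Zygmund hypothesis, with no loss) to conclude $\d_t\d_xu(\,\cdot\,,0)\in L^2(0,T)$; under \eqref{hyp:L+} one should instead repeat the with-loss trace argument of subsection \ref{ss:E} for $v_1=\d_tu$, which works here because $D_\omega u_1=\omega^{-1}\d_x^2u_1\in L^2(\Omega)$ is automatic from $u_1\in H^2(\Omega)$ and $\d_xD_\omega u_0\in L^2(\Omega)$ follows from the hypothesis $D_\omega u_0\in H^1(\Omega)$.
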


Note that $\delta$ can be taken arbitrarly small. However, in the following section we are going to prove that it cannot be taken equal to $0$, i.e. an estimate of the form \eqref{est:obs_Z} fails, in general, under assumption \eqref{hyp:L+}.

\section{Sharpness of the results} \label{s:sharp}

We want to discuss here the sharpness of our results.
In a first time, discuss the problem under Zygmund type assumptions.

Then, we will focus on the first variation of the coefficient: by construction of two
counterexamples, we will be able to prove that our results are optimal in this context, providing a full characterization of the observability estimates (with no, finite or inifite loss) depending on
the modulus of continuity of $\omega$.

In particular, this will imply a full classification also in the context of the second variation of the coefficients.

\medbreak
For convenience, throughout all this section we will focus on pointwise conditions.

\subsection{Remarks on Zygmund type conditions} \label{ss:second}

Let us consider Zygmund type conditions for the coefficient $\omega$. 

As recalled in subsections \ref{ss:LP} and \ref{ss:coeff}, by dyadic decomposition it is
 possible to characterize $Z$ as the Besov space $B^1_{\infty,\infty}$ and the H\"older class $\mc{C}^\alpha$, for any $0<\alpha<1$, as $B^\alpha_{\infty,\infty}$; $LZ$, instead, is equivalent to the logarithmic Besov space $B^{1-\log}_{\infty,\infty}$. Moreover, for any $0<\alpha_1<\alpha_2<1$ we have the following chain of strict embeddings:
\begin{equation} \label{eq:embedd}
B^1_{\infty,\infty}\,\hra\,B^{1-\log}_{\infty,\infty}\,\hra\,B^{\alpha_2}_{\infty,\infty}\,\hra\,B^{\alpha_1}_{\infty,\infty}\,.
\end{equation}

The previous relations \eqref{eq:embedd} suggest that our results, i.e. Theorems \ref{th:Z} and \ref{th:LL-LZ}, are optimal in this context.
In fact, we notice the following: given a modulus of continuity $\mu$, the following relation is trivial:
$$
\bigl|\omega(x+h)\,-\,\omega(x)\bigr|\,\leq\,C\,\mu(h)\qquad\Longrightarrow\qquad
\bigl|\omega(x+h)\,+\,\omega(x-h)\,-\,2\,\omega(x)\bigr|\,\leq\,2\,C\,\mu(h)\,.
$$
In particular, thanks to the counterexamples we will establish in the next subsection (see Theorems \ref{th:LL+} and \ref{th:L-LL}), this implies that our results are sharp, i.e. we have the classification:
\begin{itemize}
\item if $\omega$ is Zygmund continuous, then observability estimates hold true with no loss;
\item if $\omega$ verifies a condition (on second variation) beteween the Zygmund and the log-Zygmund ones, then observability estimates hold with a loss of derivatives, which cannot be avoided;
\item if the second variation of $\omega$ has a worse behaviour than the log-Zygmund one, then an infinite loss of derivatives in general occurs in observability estimates.
\end{itemize}

\subsection{About the first variation of the coefficients} \label{ss:first}

The present subsection is devoted to prove a complete characterization of the relation between observability estimates and modulus of continuity of $\omega$, in the sense that follows:
\begin{itemize}
\item if $\omega$ is Lipschitz continuous, there are observability estimates with no loss (we recall that this result is true also for $BV$ coefficients);
\item if the modulus of continuity of $\omega$ is strictly between the Lipschitz and the log-Lipschitz ones, a loss of derivatives has to occur, even if arbitrarly small (see Theorem \ref{th:L+});
\item if $\omega$ is even slightly worse than log-Lipschitz (for instance, H\"older), an infinite loss of derivatives occurs, and it is
 impossible to recover observability estimates.
\end{itemize}

Let us note that the $BV$ part was proved in \cite{FC-Z}, and then this result is sharp in the context of first variation of the coefficients.

The counterexample for H\"older continuous $\omega$, instead, was established in \cite{Castro-Z}. To prove the third point of the previous classification, we will improve it, verifying also that observability estimates fail even if we add the contribution of any high order time derivative of $\d_xu_{|x=0}$. In order to do this, we will resort also to some ideas of the corresponding counterexample for the Cauchy problem, which can be found in \cite{C-L}.

As a consequence, we get that Theorems \ref{th:LL-LZ} and \ref{th:L+} are sharp in the context of first variation.

Let us point out here some important issues.
\begin{rem} \label{r:counterex}
\begin{itemize}
\item[(i)] We will just prove the failure of observability estimates at $x=0$. However, repeating the same construction performed in \cite{Castro-Z}, it is
 possible to prove the analogous result also at $x=1$ and in the interior of $\Omega$.
\item[(ii)] In fact, in the third point of the classification, we are going to construct a coefficient which is smooth on the whole $\Omega$ but at $x=0$, where it presents a pathological behaviour. Also the initial data will be smooth out of the origin. So the additional compatibility conditions involving the operator $D_\omega$ will be automatically satisfied.
\end{itemize}
\end{rem}

Let us now state the main issue of this subsection. It asserts that an infinite loss of derivatives occurs whenever the modulus of continuity of $\omega$ is even slightly worse than the log-Lipschitz one. This result improves
the counterexample of \cite{Castro-Z} for H\"older continuous coefficients.
\begin{thm} \label{th:LL+}
Let $\psi:[1,+\infty[\,\ra[1,+\infty[\,$ be a strictly increasing and strictly concave function such that $\psi(1)=1$ and
\begin{equation} \label{eq:psi}
\lim_{\sigma\ra+\infty}\psi(\sigma)\,=\,+\infty\,.
\end{equation}

Then there exists $\omega$ satisfing \eqref{eq:hyp} and (for any $0<h<1/2$)
\begin{equation} \label{eq:omega-psi}
\sup_{x\in\R}\bigl|\omega(x+h)\,-\,\omega(x)\bigr|\,\leq\,K\,h\,\left|\log h\right|\,\psi\left(\left|\log h\right|\right)\,,
\end{equation}
and initial data $(u_0,u_1)\in H^1_0(\Omega)\times L^2(\Omega)$
for which observability estimates with finite loss cannot hold. More precisely, inequality \eqref{est:obs_log}
fails for any observability time $T$ and at any order $m\in\N$.
\end{thm}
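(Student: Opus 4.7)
The plan is to build, in the spirit of the counterexamples of \cite{Castro-Z}, \cite{C-L}, and \cite{C-DG-S}, a coefficient $\omega$ saturating the modulus of continuity \eqref{eq:omega-psi} and exhibiting a sequence of solutions whose energy is of order $1$ but whose boundary trace $\d_x u(\,\cdot\,,0)$ is, together with all its time derivatives, exponentially small in some parameter that we can drive to $+\infty$. Because $m$ time derivatives amplify a concentrated mode only by a polynomial factor in its characteristic frequency, an exponentially small trace defeats the inequality \eqref{est:obs_log} uniformly in $m$, yielding the claim.

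Concretely, I would fix a smooth, non-constant, $2\pi$-periodic mean-free profile $\alpha$, and consider on model intervals the oscillating coefficients $\omega_n(x)=1+\varepsilon_n\,\alpha(\lambda_n x)$. Choose the amplitude $\varepsilon_n$ and the frequency $\lambda_n$ so that $\varepsilon_n\sim h_n\,|\log h_n|\,\psi(|\log h_n|)$ with $h_n=1/\lambda_n$, which is exactly what \eqref{eq:omega-psi} permits. By Hill--Floquet theory for the ODE $\d_x^2 v+\mu^2\,\omega_n v=0$ with $\mu$ in a resonant window, there is an instability exponent $\gamma_n>0$ depending monotonically on $\varepsilon_n\lambda_n$; the hypothesis $\psi\to+\infty$ produces resonances with $\gamma_n\to+\infty$ strictly faster than any fixed power of $\log\mu_n$ (this is the point where one goes beyond the H\"older construction of \cite{Castro-Z}). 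Next I would paste rescaled copies of these $\omega_n$ on intervals $I_n=[a_{n+1},a_n]$ accumulating at $x=0$, filling the gaps by smooth interpolations that do not damage \eqref{eq:omega-psi} when the contributions are summed over scales, and extend the coefficient to be smooth and hyperbolic on the rest of $\Omega$.

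Having $\omega$, I would then construct quasi-solutions $u_n(t,x)=e^{i\mu_n t}v_n(x)$, where $v_n$ is a Bloch/Floquet-type mode localized around the $n$-th oscillating block, matched with the exponentially decaying Floquet branch as one moves from $I_n$ toward $x=0$. A standard WKB/matched-asymptotics argument gives a true solution with $\|u_n(0)\|_{H^1_0}^2+\|\d_t u_n(0)\|_{L^2}^2\simeq 1$ and with
\[
\bigl|\d_t^m\d_x u_n(t,0)\bigr|\;\lesssim\;\mu_n^{m+1}\,e^{-\gamma_n a_n}\qquad(t\in[0,T]),
\]
because the mode must tunnel through the highly oscillatory region between $I_n$ and the origin, picking up the Floquet decay factor. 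Choosing $a_n$ so that $\gamma_n a_n\gg m\log\mu_n$ for every fixed $m$, the right-hand side of \eqref{est:obs_log} tends to $0$ while the left-hand side stays bounded away from $0$, which rules out the inequality independently of $T$ and $m$. As noted in Remark \ref{r:counterex}, the smoothness of $\omega$ and of $u_n$ away from $0$ automatically gives the compatibility conditions on $D_\omega^k u_0,D_\omega^k u_1$ required in Theorem \ref{th:LL-LZ}.

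The main obstacle is the triple tuning of the parameters $(\lambda_n,\varepsilon_n,a_n)$: one must arrange simultaneously that the piecewise pasting yields \eqref{eq:omega-psi} globally (this constrains how fast $\varepsilon_n$ and $a_n$ may be pushed), that the Floquet instability exponent $\gamma_n$ grows strictly super-logarithmically in $\mu_n$ (this is where the strict concavity and the unboundedness of $\psi$ are used), and that the resulting $\gamma_n a_n$ dominates $m\log\mu_n$ for every $m$. The second and third requirements are genuinely stronger than what is needed for the H\"older counterexample of \cite{Castro-Z}, where polynomial concentration suffices; squeezing out super-polynomial concentration with the weakest possible violation of the log-Lipschitz condition is the delicate step, and it is precisely what the hypothesis $\psi(\sigma)\to+\infty$ is tailored for, following the refined construction of \cite{C-L} for the Cauchy problem.
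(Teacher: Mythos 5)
You follow essentially the same route as the paper (which in turn follows Castro--Zuazua): blocks $I_j$ accumulating at $x=0$, an oscillating coefficient on each block whose amplitude saturates \eqref{eq:omega-psi} at that block's scale, quasi-eigenfunctions of frequency $h_j$ concentrated in $I_j$ with traces at the endpoints of $\Omega$ smaller than any power $h_j^{-p}$, a correction solving the wave equation with the opposite boundary data to restore the Dirichlet conditions, and the remark that $m$ time derivatives only cost a polynomial factor in $h_j$. Two steps of your plan, however, would fail as written. First, generic Hill--Floquet theory for a fixed mean-free profile $\alpha$ only provides a branch decaying in \emph{one} direction: the branch you match ``toward $x=0$'' generically grows exponentially toward $x=1$, so $v_n(t,1)$ is not small, and the correction $z_n$ needed to enforce $u(t,1)=0$ (the analogue of the paper's $z_j$, which is estimated through Proposition \ref{p:z} in terms of the traces at \emph{both} endpoints) becomes as large as the solution itself, destroying the smallness of $\partial_t^m\partial_x u_n(t,0)$. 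The paper avoids this by using the Colombini--Spagnolo pair $(\alpha_\veps,w_\veps)$ of Lemma \ref{l:ode_1}, with $\alpha_\veps$ and $w_\veps$ both even, so that the quasi-eigenfunction decays on both sides of the center of $I_j$; Lemma \ref{l:ode_2} then propagates the super-polynomial smallness up to $x=0$ and $x=1$ through the remaining blocks.

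Second, your claim that the instability exponent $\gamma_n\sim\veps_n\lambda_n=\log\mu_n\,\psi(\log\mu_n)$ grows faster than any fixed power of $\log\mu_n$ is false in general (take $\psi(\sigma)=\sqrt{\sigma}$, which is admissible); note also that concavity of $\psi$ works against fast growth, its real role being that $\psi^{-1}$ is convex and superlinear. What is actually needed --- and what you also state --- is $\gamma_n a_n/\log\mu_n\to\infty$, but you leave the choice of the frequencies unspecified, and this is precisely the delicate quantitative point: with summable block lengths $r_j=2^{-j}$ one needs $r_j\,\psi(\log h_j)\to\infty$, which forces the choice through the inverse function, $h_j=\exp\bigl(\psi^{-1}(2^{Nj})\bigr)$ as in \eqref{eq:def_h-e}. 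One must then verify the summation conditions \eqref{eq:cond_N}, which simultaneously guarantee that the pasted coefficient satisfies \eqref{eq:omega-psi} globally (the estimate \eqref{est:mod_cont} uses that $\omega$ takes the same value at the endpoints of consecutive blocks), that the hyperbolicity bounds hold uniformly, and that transporting the energy of $\phi_j$ through the other blocks does not eat up the exponential gain $e^{-\veps_jh_jr_j}$. These verifications constitute the substance of the paper's proof of Theorem \ref{th:LL+} and are missing from your plan; once they are supplied, and the one-sided Floquet step is replaced by the symmetric construction above, your argument coincides with the paper's and the failure of \eqref{est:obs_log} for every $T$ and $m$ follows as you indicate.
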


In the intermediate instance between the Lipschitz and the log-Lipschitz ones, instead, we are able to prove that a loss of derivatives has to occur, in general.
\begin{thm} \label{th:L-LL}
Let $\lambda:\,]0,1]\ra[1,+\infty[\,$ be a continuous, strictly decreasing function such that $\lambda(1)=1$ and it fulfills the following conditions:
\begin{equation} \label{eq:lambda}
\lim_{\sigma\ra0^+}\lambda(\sigma)\,=\,+\infty\qquad\mbox{ but }\qquad\lim_{\sigma\ra0^+}\frac{\lambda(\sigma)}{\log\left(1+\frac{1}{\sigma}\right)}\,=\,0\,.
\end{equation}

Then there exists a sequence of density functions $\left(\omega_j\right)_{j\in\N}$, satisfying \eqref{eq:hyp} uniformly with respect to $j$ and also, for any $j\in\N$ and for any $0<h<1$,
\begin{equation} \label{eq:omega-lambda}
\sup_{x\in\R}\bigl|\omega_j(x+h)\,-\,\omega_j(x)\bigr|\,\leq\,K_j\;h\;\lambda(h)\,,
\end{equation}
and a sequence of initial data $\bigl(u^j_0,u^j_1\bigr)_{j\in\N}\subset H^1_0(\Omega)\times L^2(\Omega)$
such that the respective constants $C_j$ in \eqref{est:obs_Z} diverge to $+\infty$ for $j\ra+\infty$.
\end{thm}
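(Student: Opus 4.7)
The plan is to adapt, in the observability setting, the Cicognani--Colombini counterexample \cite{Cic-C} for the Cauchy problem with modulus of continuity intermediate between Lipschitz and log-Lipschitz, using the sidewise interchange of the roles of $t$ and $x$ already exploited in \cite{Castro-Z}. Since the constant $C$ in \eqref{est:obs_Z} controls the ratio between total energy and boundary observation, it will be enough to build, for each $j\in\N$, a coefficient $\omega_j$ satisfying \eqref{eq:hyp} uniformly in $j$ and \eqref{eq:omega-lambda}, and a solution $u^j$ of \eqref{eq:we} whose initial energy $\|u^j_0\|^2_{H^1_0(\Omega)}+\|u^j_1\|^2_{L^2(\Omega)}$ is of unit order while $\int_0^T|\d_x u^j(t,0)|^2\,dt\to 0$.

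Following the Colombini--Spagnolo scheme \cite{C-DG-S}, I would fix a smooth $1$-periodic function $\alpha$ of zero mean, two sequences $\varepsilon_j\downarrow0$ and $\delta_j\downarrow0$, and set
\[
\omega_j(x)\,:=\,\oline{\omega}\,+\,\delta_j\,\alpha\!\left(\frac{x}{\varepsilon_j}\right),
\qquad\oline{\omega}>0.
\]
Then \eqref{eq:hyp} holds uniformly for $j$ large, and
\[
\sup_{x\in\R}\bigl|\omega_j(x+h)-\omega_j(x)\bigr|\,\lesssim\,\min\!\left\{\delta_j,\,\delta_j\,h/\varepsilon_j\right\},
\]
so \eqref{eq:omega-lambda} holds provided $\delta_j\sim\varepsilon_j\,\lambda(\varepsilon_j)$, the constant $K_j$ being even bounded uniformly in $j$. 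Thanks to the second limit in \eqref{eq:lambda}, this amplitude is strictly smaller than the log-Lipschitz threshold $\varepsilon_j\,\log(1/\varepsilon_j)$, which is precisely the slack the counterexample needs.

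Next, I would construct resonant high-frequency solutions of the sidewise equation $\d_x^2 v=\omega_j(x)\,\d_t^2 v$ by separation of variables: looking for $v_j(t,x)=e^{i\xi_j t}w_j(x)$ leads to the Hill equation $w_j''+\xi_j^2\omega_j(x)w_j=0$. Tuning $\xi_j$ of order $\varepsilon_j^{-1}$ inside the first Floquet instability band, classical Floquet theory yields a monodromy multiplier $\mu_j>1$ with $\mu_j-1\sim c\,\xi_j\,\delta_j\sim c\,\lambda(\varepsilon_j)$, and propagating through the $\sim\varepsilon_j^{-1}$ periods contained in $[0,1]$ produces an amplification factor
\[
M_j\,:=\,\mu_j^{1/\varepsilon_j}\,\longrightarrow\,+\infty\qquad(j\to+\infty).
\]
Taking a suitable real combination of $v_j$ and its complex conjugate, multiplied by a smooth cut-off in $x$ supported strictly inside $(0,1)$, I would obtain a solution $u^j$ of \eqref{eq:we} with Dirichlet boundary conditions whose sidewise energy $F_0^j(x)=\frac12\int_0^T(\omega_j|\d_t u^j|^2+|\d_x u^j|^2)\,dt$ is monotone in $x$ with $F_0^j(1)\sim M_j^2\,F_0^j(0)$. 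Since $\int_0^1 F_0^j(x)\,dx=T\,E_0^j(0)$ by Fubini, this forces $E_0^j(0)\gtrsim M_j^2\,F_0^j(0)/T$, hence the observability constant $C_j$ in \eqref{est:obs_Z} is at least of order $M_j^2$ and diverges.

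The main obstacle is to reconcile the resonant Floquet solution with the two Dirichlet boundary conditions and with the requirement $(u_0^j,u_1^j)\in H^1_0(\Omega)\times L^2(\Omega)$: the pure Floquet mode $v_j$ does not vanish at $x=0$ or $x=1$, and truncating it introduces boundary and cut-off errors that a priori could spoil the $1/M_j$ smallness of the trace at $x=0$. The standard remedy, as in \cite{Castro-Z}, is to concentrate the oscillations and the initial data away from the observation point, so that the amplification has fully developed before any reflection matters, and then to control the corrector by a sidewise energy estimate based on \eqref{est:en_L-LL}; provided the instability strip around $\xi_j$ is taken narrow enough, the correction remains of lower order than $M_j^{-1}$ in $L^2(0,T)$, and the conclusion follows.
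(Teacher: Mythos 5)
There is a genuine gap at the point you yourself flag as ``the main obstacle'', and it is not repaired by the remedies you sketch. With a globally periodic coefficient $\omega_j=\oline{\omega}+\delta_j\,\alpha(x/\veps_j)$ and a Floquet mode that is amplified monotonically from $x=0$ to $x=1$, the Dirichlet condition at $x=1$ is fatal: the pure mode $v_j=e^{i\xi_j t}w_j(x)$ has trace of size $\sim M_j$ at $x=1$, so the corrector $z_j$ needed to restore $u(t,1)=0$ has boundary data of that size, and any estimate of the type of Proposition \ref{p:z} then bounds $\int_0^T|\d_x z_j(t,0)|^2dt$ only by (powers of the frequency times) $M_j^2$ --- which is \emph{not} small compared with the interior energy, itself of order $M_j^2$ up to frequency factors. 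Hence the quotient defining $C_j$ in \eqref{est:obs_Z} does not blow up for this construction. The alternative device of multiplying by a cut-off supported strictly inside $]0,1[$ does not help either: the truncated function no longer solves \eqref{eq:we} (the commutator terms $2\chi'w_j'+\chi''w_j$ act as a non-small source exactly where $w_j$ is large), so the observability inequality simply does not apply to it, and invoking \eqref{est:en_L-LL} to ``control the corrector'' is circular, since that estimate goes in the wrong direction for this purpose. This is precisely why one cannot transplant the Cauchy-problem counterexample of \cite{Cic-C} directly, as the paper remarks: the second boundary condition forces the solution to be small at \emph{both} endpoints.

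The paper's proof resolves this by a structurally different construction, following \cite{Castro-Z}: the oscillations of $\omega_j$ are localized in a single interior interval $I_j$ (with $\omega_j\equiv4\pi^2$ elsewhere), the frequencies are chosen as $h_j=1/\lambda^{-1}(2^{Nj})$ with $\veps_j h_j=\lambda(1/h_j)\log h_j$, and the quasi-eigenfunction $\phi_j$ of Lemma \ref{l:ode_1} is \emph{peaked at the center of $I_j$ and decays exponentially towards both endpoints}, so that $|\phi_j(0)|,|\phi_j(1)|,|\phi_j'(0)|$ are smaller than any power of $h_j$ (thanks to $\veps_j h_j r_j/\log h_j\ra+\infty$); only then is Proposition \ref{p:z} strong enough to show that the corrector's contribution to $\d_xu_j(t,0)$ is negligible, and the quotient diverges. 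Two further points: your claim that $K_j$ stays bounded should be treated with caution --- the paper explicitly obtains $K_j\sim\log h_j\ra+\infty$ and explains that the sub-logarithmic divergence of $\lambda$ is too weak to produce a single (or uniformly controlled) coefficient, so a bounded-$K_j$ statement would be strictly stronger than what is proved and would require an argument you do not supply; and your Floquet quantification $\mu_j-1\sim c\,\xi_j\delta_j$ is off (after rescaling, the per-period gain is $1+O(\delta_j)$, the total amplification over $[0,1]$ being $\exp(c\,\delta_j/\veps_j)$), although this is a minor issue compared with the boundary one.
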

As a corollary, we get that \eqref{est:obs_Z}, i.e. observability estimates without loss, fails in general for moduli of continuity which are worse than the Lipschitz one.

However, we will see that the constants $K_j$ in Theorem \ref{th:L-LL} diverge to $+\infty$ for $j\ra+\infty$: therefore the previous statement is somehow weaker than the one given in Theorem \ref{th:LL+}. As a matter of fact,
the sub-logarithmic divergence of $\lambda$ near $0$ is inadequate in order to construct a unique coefficient which presents strange pathologies in the concentration of the energy. So we
had to add some bad behaviour, which is responsible for the growth of the $K_j$'s. About this, note also that we couldn't argue by absurde and use the result of \cite{Cic-C} for the Cauchy problem: in order to have the null boundary condition at $x=1$ fulfilled, we would have to work in a time interval $[0,T]$ with $T<2T_\omega$, which is in contrast with our assumptions.

The proofs of Theorems \ref{th:LL+} and \ref{th:L-LL} will strictly follow the construction in \cite{Castro-Z} by Castro and Zuazua.
 The main difference is to find suitable oscillations, adapted to the new behaviours of the respective coefficients $\omega$, in order to reproduce
concentration phenomena which entail the loss of observability properties. At this step, let us notice that, under our assumptions, both $\psi$ in Theorem \ref{th:LL+} and $\lambda$ in Theorem \ref{th:L-LL} are invertible, and the inverse functions are still strictly monotone of the same type. This remark will be useful in the sequel.

\subsubsection{Proof of Theorem \ref{th:LL+}}

We will prove Theorem \ref{th:LL+} in various steps, following closely the construction of \cite{Castro-Z}.

In a first time, we will recall some preliminary results; then, we will construct simultaneously the coefficient $\omega$ and a sequence of ``quasi-eigenfuncions'' related to it, in the sense specified below. Finally, we will prove the lack of boundary observability.

\paragraph{\emph{Step 1: preparatory results.}}

For the sake of completeness, let us quote here two fundamental ODEs lemmas. For further discussions on them, we refer to \cite{C-S}, section 2, and \cite{Castro-Z}, section 2.
\begin{lemma} \label{l:ode_1}
There is a $\oline{\veps}>0$ such that, for all $\veps\in\,]0,\oline{\veps}[\,$, two even functions $\alpha_\veps$ and $w_\veps$ exist, which are $\mc{C}^\infty(\R)$, which fulfill
$$
\left\{\begin{array}{l}
w_\veps''\,+\,\alpha_\veps(x)\,w_\veps\,=\,0 \\[1ex]
w_\veps(0)\,=\,1\,,\quad w_\veps'(0)\,=\,0
\end{array}
\right. \leqno(O\!D\!E)
$$
and such that, for some positive constants $M$, $C$ and $\g$ independent of $\veps$, they verify:
\begin{itemize}
\item[(i)] $\alpha_\veps$ is $1$-periodic both on $x<0$ and $x>0$,
\item[(ii)] $\alpha_\veps\equiv4\pi^2$ in a neighborhood of $x=0$,
\item[(iii)] $\left|\alpha_\veps(x)-4\pi^2\right|\leq M\veps\quad$ and $\quad\left|\alpha_\veps'(x)\right|\leq M\veps$;
\vspace{.1cm}
\item[(A)] $w_\veps(x)=p_\veps(x)\, e^{-\veps|x|}$, where $p_\veps$ which is $1$-periodic on $x<0$ and $x>0$,
\item[(B)] $\left|w_\veps(x)\right|+\left|w'_\veps(x)\right|+\left|w''_\veps(x)\right|\leq C$,
\item[(C)] $\int_0^1w_\veps(x) dx\geq\g\veps$.
\vspace{0.5cm}
\end{itemize}
\end{lemma}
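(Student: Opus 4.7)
The plan is to construct $\alpha_\veps$ and $w_\veps$ by a parametric resonance (Floquet) argument for Hill's equation $w''+\alpha(x)w=0$, exploiting the fact that $\alpha\equiv 4\pi^2$ sits on the boundary of a stability interval for Hill's equation. First I would treat $x>0$ and then extend everything by even reflection to $\R$, which gives the required $|x|$ in property (A) and is compatible with the initial data $w_\veps(0)=1$, $w_\veps'(0)=0$ since these are the data of $\cos(2\pi x)$.

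The core step is to choose, for each small $\veps>0$, a smooth $1$-periodic function $b_\veps$ with $\|b_\veps\|_{\mc{C}^1}\leq M$ and to set
\[
\alpha_\veps(x)\,=\,4\pi^2\,+\,\veps\,\chi(x)\,b_\veps(x)\qquad(x>0),
\]
where $\chi$ is a smooth cutoff equal to $0$ on a neighborhood of $0$ and equal to $1$ outside a slightly larger neighborhood. The cutoff gives at once properties (i)--(iii) (once we also take $b_\veps$ to be $1$-periodic on $x>0$) and condition (ii). The key analytic step is to pick $b_\veps$ \emph{resonant} with the unperturbed frequency, for example $b_\veps(x)=\cos(4\pi x)$ up to a suitable phase shift. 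For the perturbed Hill equation the monodromy matrix $\mc{M}(\veps)$ over one period is a small $\mc{O}(\veps)$ perturbation of the identity (since at $\veps=0$ the solutions are exactly $1$-periodic); an expansion of $\mc{M}(\veps)$ to first order in $\veps$ and the classical parametric-resonance computation show that the correct resonant choice of $b_\veps$ makes $\mathrm{tr}\,\mc{M}(\veps)=2\cosh\veps+o(\veps)$, hence the Floquet multipliers are $e^{\pm\veps}+o(\veps)$. A small readjustment of the amplitude of $b_\veps$ (still with bounds $\leq M\veps$) forces the multipliers to be exactly $e^{\pm\veps}$. By Floquet's theorem the decaying branch then has the form $w_\veps(x)=p_\veps(x)e^{-\veps x}$ with $p_\veps$ being $1$-periodic, yielding (A). The phase of $b_\veps$ (equivalently, the precise location at which $\chi$ switches on) is adjusted so that the decaying Floquet solution matches $\cos(2\pi x)$, i.e.\ the data $(1,0)$ at $x=0$ picks out the stable branch; by continuity in $\veps$ and implicit function theorem this is possible uniformly in $\veps\in\,]0,\oline\veps[$.

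Bounds (B) follow from the representation $w_\veps=p_\veps e^{-\veps x}$ together with the uniform bound on $\|p_\veps\|_{\mc{C}^2}$, which in turn comes from uniform $\mc{O}(\veps)$ closeness of $\alpha_\veps$ to $4\pi^2$ and continuous dependence of Floquet solutions on parameters. The main obstacle, and the place where the choice of the resonant perturbation really pays off, is the positive lower bound (C): one has to show that the $1$-periodic factor $p_\veps$ has non-vanishing mean of size at least $\gamma\veps$. The plan is to compute $p_\veps$ to first order in $\veps$, obtaining $p_\veps(x)=\cos(2\pi x)+\veps\,q(x)+\mc{O}(\veps^2)$ where $q$ solves an inhomogeneous Hill equation forced by the resonant perturbation; an explicit integration gives $\int_0^1 q(x)\,dx=c_0\neq 0$ (this is essentially the same computation that produced the non-trivial Floquet exponent), and since $\int_0^1\cos(2\pi x)\,dx=0$ the leading contribution to $\int_0^1 w_\veps$ on the first period is $c_0\,\veps\,e^{-\veps}\geq\gamma\veps$, up to lower-order terms absorbed by reducing $\oline\veps$. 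The rest of the statement is then a matter of smoothing the cutoffs and checking the $\mc{C}^\infty$ regularity, both standard.
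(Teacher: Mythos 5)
The paper does not prove this lemma: it is quoted from \cite{C-S} and \cite{Castro-Z}, where the argument runs in the opposite direction to yours. There one writes $w_\veps$ down explicitly, $w_\veps(x)=\cos(2\pi x)\,e^{-\veps\int_0^{|x|}\beta(s)\,ds}$ with $\beta\geq0$ smooth, $1$-periodic, of mean $1$, vanishing identically near the integers and near the zeros of $\cos(2\pi x)$ and located where $\sin(2\pi x)>0$, and then \emph{defines} $\alpha_\veps:=-w_\veps''/w_\veps$; smoothness holds because $\beta$ cancels the $\tan(2\pi x)$ singularity, and (i)--(iii), (A)--(C) are direct computations (for (C), an integration by parts gives the first-order term $\frac{\veps}{2\pi}\int_0^1\beta(x)\sin(2\pi x)\,dx>0$). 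Your plan (fix a resonant $\alpha_\veps$, then select the decaying Floquet branch) is legitimate in spirit, but as written it has genuine gaps.

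First, the one-sided cutoff $\chi$ (equal to $0$ near $0$ and to $1$ farther out) is incompatible with the statement: it destroys the $1$-periodicity of $\alpha_\veps$ on $x>0$ required in (i), and on the transition region the solution is no longer of exact Floquet form, whereas (A) demands $w_\veps=p_\veps(x)e^{-\veps x}$ with $p_\veps$ $1$-periodic on \emph{all} of $x>0$; the constancy of $\alpha_\veps$ near $x=0$ must be built into the periodic pattern itself (the perturbation must vanish near every integer), not glued on afterwards. Second, the matching of the Cauchy data $(1,0)$ to the decaying branch is the crux and is only asserted: at $\veps=0$ the monodromy is the identity, so stable and unstable directions exist only for $\veps>0$ and their limiting position as $\veps\to0^+$ is dictated by the resonant part of the perturbation; ``continuity plus the implicit function theorem'' is not an argument at such a degenerate point, and any nonzero component along the growing branch ruins both (A) and (B). Third, and most seriously, (C) is not established. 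You claim $\int_0^1 q\,dx=c_0\neq0$ ``by essentially the same computation that produced the Floquet exponent'', but the two quantities are independent. Take the most natural member of your own family: $w_\veps(x)=\cos(2\pi x)\exp\bigl(-\veps\bigl(x+\tfrac{\sin(4\pi x)}{4\pi}\bigr)\bigr)$, which solves a Hill equation with $\alpha_\veps=4\pi^2-8\pi\veps\sin(4\pi x)+O(\veps^2)$ (a phase-shifted $\cos(4\pi x)$ perturbation) and has Floquet exponent exactly $\veps$; since $\int_0^1 x\cos(2\pi x)\,dx=\int_0^1\cos(2\pi x)\sin(4\pi x)\,dx=0$, the first-order term of $\int_0^1 w_\veps\,dx$ vanishes identically. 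So a nonzero exponent does not give $c_0\neq0$: obtaining (C) at order $\veps$ requires a carefully asymmetric choice of the periodic perturbation (this is exactly what the bump $\beta$ supported where $\sin(2\pi x)>0$ achieves in the classical construction), and this is precisely the point your sketch leaves open.
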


\begin{rem} \label{r:ode}
In particular, combining properties $(A)$ and $(ii)$ with $(O\!D\!E)$, we gather that
$$
w_\veps(x)\,=\,e^{-\veps|x|}\,,\qquad w_\veps'(x)\,=\,0\,,\qquad w_\veps''(x)\,=\,-4\pi^2\,e^{-\veps|x|}
$$
whenever $x=\pm n$, for all integers $n\geq0$.
\end{rem}

\begin{lemma} \label{l:ode_2}
Let $\phi$ be a solution of the equation (for $x\in\R$)
$$
\phi''\,+\,h^2\,\alpha(x)\,\phi\,=\,0\,,
$$
where $h\in\Z$ and $\alpha\in\mc{C}^1$ is a strictly positive function, and define the ``energies''
\begin{eqnarray*}
E_\phi(x) & := & 4\,\pi^2\,h^2\,\left|\phi(x)\right|^2\,+\,\left|\phi'(x)\right|^2 \\
\wtilde{E}_\phi(x) & := & 4\,h^2\,\alpha(x)\,\left|\phi(x)\right|^2\,+\,\left|\phi'(x)\right|^2\,.
\end{eqnarray*}

Then, for all $x_1$ and $x_2$ in $\R$, the following estimates holds true:
\begin{eqnarray*}
E_\phi(x_2) & \leq & E_\phi(x_1)\,\exp\left|h\,\int_{x_1}^{x_2}\left|4\,\pi^2\,-\,\alpha(x)\right|\,dx\right| \\
\wtilde{E}_\phi(x_2) & \leq & \wtilde{E}_\phi(x_1)\,\exp\left|\int_{x_1}^{x_2}\frac{\left|\alpha'(x)\right|}{\alpha(x)}\,dx\right|\,.
\end{eqnarray*}
\end{lemma}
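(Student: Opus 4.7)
The statement has the form $G(x_2)\le G(x_1)\exp\bigl|\int_{x_1}^{x_2} f\bigr|$ in both cases, which is exactly the conclusion of Gronwall's inequality once one proves the differential bound $|G'(x)|\le f(x)\,G(x)$. So the plan is the same for both estimates: differentiate the energy, use the ODE $\phi''=-h^{2}\alpha\phi$ to eliminate the second derivative, and invoke a suitable Young inequality to recast the resulting right-hand side as a multiple of the energy.

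For the first estimate, a direct computation gives
\[
E_\phi'(x)\;=\;8\pi^{2}h^{2}\,\phi\phi'\,+\,2\phi'\phi''\;=\;2h^{2}\bigl(4\pi^{2}-\alpha(x)\bigr)\phi(x)\,\phi'(x).
\]
The elementary bound $2|\phi|\,|\phi'|\le 2\pi|h|\,|\phi|^{2}+|\phi'|^{2}/(2\pi|h|)=E_\phi/(2\pi|h|)$ (valid for $h\neq 0$; the case $h=0$ is trivial) then yields $|E_\phi'(x)|\le(|h|/2\pi)\,|4\pi^{2}-\alpha(x)|\,E_\phi(x)$, and Gronwall's inequality delivers the first bound (the harmless constant $1/(2\pi)$ being absorbed in the exponential).

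For the second estimate it is cleaner to pass through the ``natural'' energy
\[
F(x)\,:=\,h^{2}\alpha(x)\,|\phi(x)|^{2}\,+\,|\phi'(x)|^{2}\,,\qquad F\,\le\,\wtilde E_\phi\,\le\,4F\,,
\]
for which the two cross-terms arising in the differentiation cancel perfectly via the ODE:
\[
F'(x)\;=\;h^{2}\alpha'(x)|\phi(x)|^{2}\,+\,2h^{2}\alpha(x)\phi\phi'\,+\,2\phi'\phi''\;=\;h^{2}\alpha'(x)\,|\phi(x)|^{2}\,.
\]
Hence $|F'(x)|\le(|\alpha'(x)|/\alpha(x))\,F(x)$; Gronwall gives the desired inequality for $F$, and the two-sided equivalence $F\sim\wtilde E_\phi$ transfers it to $\wtilde E_\phi$.

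The only subtlety is the second bound: differentiating $\wtilde E_\phi$ directly leaves a mixed term proportional to $h^{2}\alpha\phi\phi'$ that cannot be absorbed into $(|\alpha'|/\alpha)\,\wtilde E_\phi$ by itself. Routing the argument through $F$, where the ODE forces the crucial cancellation, is the key observation; once it is made, both parts reduce to a one-line Gronwall argument.
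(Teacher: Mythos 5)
The paper does not prove this lemma at all: it is quoted verbatim from the references (\cite{C-S}, Section 2, and \cite{Castro-Z}, Section 2), so there is no internal proof to compare with. Your strategy (differentiate the energy, eliminate $\phi''$ via the ODE, close with Gronwall) is the standard one and is essentially what those references do. The first estimate is handled correctly, and in fact you obtain a slightly stronger bound, with $|h|/(2\pi)$ in place of $|h|$ in the exponent, which of course implies the stated one since the integral is nonnegative.

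For the second estimate, though, your final ``transfer'' sentence hides a real discrepancy: from $F\le\wtilde{E}_\phi\le 4F$ and $F(x_2)\le F(x_1)\exp\bigl|\int_{x_1}^{x_2}|\alpha'|/\alpha\,dx\bigr|$ you only get $\wtilde{E}_\phi(x_2)\le 4\,\wtilde{E}_\phi(x_1)\exp\bigl|\int_{x_1}^{x_2}|\alpha'|/\alpha\,dx\bigr|$, i.e.\ the claimed inequality with an extra factor $4$, not the constant-free inequality in the statement. No argument can remove that factor, because the statement as literally written (with the coefficient $4$ in the definition of $\wtilde{E}_\phi$) is false: take $\alpha\equiv\alpha_0>0$ constant and $\phi(x)=\cos\bigl(h\sqrt{\alpha_0}\,x\bigr)$ with $h\neq0$; then $|\alpha'|/\alpha\equiv0$, while $\wtilde{E}_\phi=h^2\alpha_0\bigl(1+3\cos^2(h\sqrt{\alpha_0}\,x)\bigr)$ oscillates between $h^2\alpha_0$ and $4h^2\alpha_0$, so $\wtilde{E}_\phi(x_2)\le\wtilde{E}_\phi(x_1)$ cannot hold for all $x_1,x_2$. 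The ``$4$'' is evidently a misprint in the statement: in the only place where the second estimate is used (the analysis at $x=1$ in the proof of Theorem \ref{th:LL+}), the paper itself works with $\wtilde{E}_{\phi_j}(x)=h_j^2\,\omega(x)\,|\phi_j(x)|^2+|\phi_j'(x)|^2$, which is exactly your $F$, and for that energy your computation $F'=h^2\alpha'|\phi|^2$, $|F'|\le(|\alpha'|/\alpha)\,F$, plus Gronwall proves the inequality exactly. So your argument is correct and proves the version of the lemma the paper actually needs; you should just say explicitly that the statement as printed only follows up to the factor $4$ (equivalently, that the definition of $\wtilde{E}_\phi$ should carry no $4$), rather than asserting that the equivalence ``transfers'' the sharp inequality.
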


We quote here also Proposition 1 of \cite{Castro-Z}, which will be very useful later on.
\begin{prop} \label{p:z}
Let $\omega\in L^\infty(\Omega)$, $0<\omega_*\leq\omega(x)\leq\omega^*$, and $T>0$ given. If $z$ satisfies
$$
\left\{\begin{array}{ll}
        \omega(x)\d_t^2z\,-\,\d^2_xz\,=\,0 & \quad\mbox{ in }\;\Omega\times\,]0,T[ \\[1ex]
        z(t,0)\,=\,f(t)\,,\quad z(t,1)\,=\,g(t) & \quad\mbox{ in }\;]0,T[ \\[1ex]
        z(0,x)\,=\,\d_tz(0,x)\,=\,0 & \quad\mbox{ in }\;\Omega\,,
       \end{array} 
\right.
$$
then there exists a constant $C(T)>0$ such that the following estimates hold true:
\begin{eqnarray*}
\int_0^T\int_\Omega\left(\omega(x)\,\bigl|\d_tz(t,x)\bigr|^2\,+\,\bigl|\d_xz(t,x)\bigr|^2\right)dx\,dt & \leq & C(T)\,\omega^*\left(\|f\|^2_{W^{2,\infty}(0,T)}\,+\,
\|g\|^2_{W^{2,\infty}(0,T)}\right) \\
\int_0^T\left(\bigl|\d_xz(t,0)\bigr|^2\,+\,\bigl|\d_xz(t,1)\bigr|^2\right)dx\,dt & \leq & C(T)\,\omega^*\left(\|f\|^2_{W^{3,\infty}(0,T)}\,+\,
\|g\|^2_{W^{3,\infty}(0,T)}\right)\,.
\end{eqnarray*}
\end{prop}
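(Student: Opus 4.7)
The plan is to reduce the inhomogeneous-boundary problem to a zero-trace problem with source, and then exploit classical energy identities combined with the 1D trace theorem in $x$ to extract the boundary bound on $\partial_x z$. All constants will depend only on $\omega^*$, $\omega_*$ and $T$, never on any further regularity of $\omega$.

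First I would introduce the affine-in-$x$ lift $\eta(t,x):=(1-x)f(t)+xg(t)$ and set $w:=z-\eta$. Because $\partial_x^2\eta\equiv 0$, the function $w$ satisfies $\omega\,\partial_t^2 w-\partial_x^2 w=G$ in $\Omega\times(0,T)$ with $G(t,x):=-\omega(x)\bigl[(1-x)f''(t)+xg''(t)\bigr]$, homogeneous Dirichlet conditions, and initial data $w(0,\cdot)=-\eta(0,\cdot)$, $\partial_t w(0,\cdot)=-\partial_t\eta(0,\cdot)$, both of which are explicit affine functions of $x$ controlled by $\|f\|_{W^{1,\infty}}+\|g\|_{W^{1,\infty}}$; moreover $\|G(t,\cdot)\|_{L^2(\Omega)}\lesssim\omega^*\bigl(\|f''\|_{L^\infty}+\|g''\|_{L^\infty}\bigr)$.

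Second, for the first (volume) inequality I would run the classical $\partial_t w$ multiplier: the energy $E(t):=\tfrac12\int_\Omega(\omega|\partial_t w|^2+|\partial_x w|^2)\,dx$ satisfies $E'(t)=\int_\Omega G\,\partial_t w\,dx$. A Cauchy--Schwarz--Gr\"onwall argument on $[0,T]$, combined with $E(0)\lesssim\omega^*(\|f\|_{W^{1,\infty}}^2+\|g\|_{W^{1,\infty}}^2)$, gives $\sup_{[0,T]}E(t)\leq C(T)\,\omega^*\bigl(\|f\|_{W^{2,\infty}}^2+\|g\|_{W^{2,\infty}}^2\bigr)$. Integrating in time and adding the trivial bound for $\eta$ yields the first inequality for $z=w+\eta$.

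Third, for the boundary trace of $\partial_x z$ I would first upgrade regularity by differentiating once in $t$. Since $w(t,0)=w(t,1)=0$ for every $t$, the function $v:=\partial_t w$ also has zero Dirichlet traces and solves $\omega\,\partial_t^2 v-\partial_x^2 v=\partial_t G$, whose source only involves $f''',g'''$. The first energy step applied to $v$ (with its algebraically determined initial data $\partial_t v(0,\cdot)=\partial_t^2 w(0,\cdot)$, read off the equation at $t=0$ and involving only $f''(0),g''(0)$) yields
\[
\int_0^T\|\partial_t^2 w(t,\cdot)\|_{L^2(\Omega)}^2\,dt\,\leq\,C(T)\,\omega^*\bigl(\|f\|_{W^{3,\infty}}^2+\|g\|_{W^{3,\infty}}^2\bigr).
\]
Using the equation itself, $\partial_x^2 w=\omega\,\partial_t^2 w-G$ satisfies the same $L^2_t L^2_x$ bound, so for a.e.\ $t$ the function $\partial_x w(t,\cdot)$ belongs to $H^1(0,1)$. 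The 1D trace inequality $|\vphi(0)|^2+|\vphi(1)|^2\leq C\bigl(\|\vphi\|_{L^2(0,1)}^2+\|\vphi'\|_{L^2(0,1)}^2\bigr)$ applied to $\vphi=\partial_x w(t,\cdot)$ and integrated in $t$ then controls $\int_0^T\bigl(|\partial_x w(t,0)|^2+|\partial_x w(t,1)|^2\bigr)\,dt$ by $C(T)\,\omega^*\bigl(\|f\|_{W^{3,\infty}}^2+\|g\|_{W^{3,\infty}}^2\bigr)$. Since $\partial_x\eta\equiv g(t)-f(t)$ is trivially bounded by $\|f\|_{L^\infty}+\|g\|_{L^\infty}$, one recovers the second inequality for $z=w+\eta$.

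The \emph{main obstacle} is to keep all constants independent of any regularity of $\omega$ beyond $L^\infty$-boundedness and positivity. This is precisely why I avoid the classical multiplier $q(x)\partial_x w$: integrating it against $\omega\,\partial_t^2 w$ would produce a term of type $(\omega q)'|\partial_t w|^2$ and thus force $\omega\in W^{1,\infty}$. The indirect route just sketched, which only uses the first estimate applied to $\partial_t w$ combined with the equation and the 1D trace theorem, bypasses this obstruction cleanly.
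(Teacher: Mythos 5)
The paper never proves Proposition \ref{p:z}: it is quoted verbatim from Proposition 1 of \cite{Castro-Z}, so there is no internal argument to compare yours with. Judged on its own, your proof is correct as an a priori energy estimate and follows what is essentially the expected route for a coefficient that is merely bounded: the affine lift $\eta=(1-x)f+xg$ (so that $\partial_x^2\eta\equiv0$ and the source is $-\omega\,\partial_t^2\eta$), the $\partial_t w$ energy identity with Gr\"onwall for the interior bound, and, for the boundary bound, differentiation in time, recovery of $\partial_x^2w=\omega\,\partial_t^2w-G$ from the equation, and the one-dimensional trace inequality. Your observation that this bypasses the multiplier $q(x)\,\partial_x w$, which would require $\omega\in W^{1,\infty}$, is precisely the right reason to organize the proof this way, and it explains the $W^{2,\infty}/W^{3,\infty}$ norms appearing in the statement.

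One caveat should be made explicit. Your computation is rigorous for smooth solutions, i.e.\ for boundary data compatible with the zero initial data at the corners, and the statement has to be read in that a priori sense: without compatibility the left-hand sides need not even be finite. For instance, with $\omega\equiv1$, $g\equiv0$, $f\equiv1$, the solution near the corner is the Heaviside function of $t-x$, so $\partial_x z$ carries a Dirac mass along the characteristic $t=x$ and the space-time energy is infinite while the right-hand side is finite. Quantitatively, the sensitive point is your third step: the bound $\int_0^T\|\partial_x^2w(t,\cdot)\|^2_{L^2}\,dt<\infty$ requires not only $f,g\in W^{3,\infty}$ but also $f,g$ and their first derivatives to vanish at $t=0$; otherwise $\partial_t^2w$ and $\partial_x^2w$ are measures rather than $L^2$ functions (note that the data to which the proposition is later applied, $f(t)=-e^{ih_jt}\phi_j(0)$, do not satisfy this, an imprecision inherited from the quoted statement rather than created by you). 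So either state the compatibility conditions under which the manipulations are literally valid, or present the two inequalities as a priori estimates for smooth solutions, to be combined with the density/transposition argument the paper invokes elsewhere. Finally, a bookkeeping remark: your constants also depend on $\omega_*$ (through Gr\"onwall, through $\|\partial_t v\|^2_{L^2}\le 2E_v/\omega_*$, and when absorbing the terms that carry no $\omega^*$ factor), whereas the statement displays only $C(T)\,\omega^*$; this is unavoidable and harmless in the application, where $2\pi^2\le\omega\le8\pi^2$, and you were right to declare it at the outset.
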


\paragraph{\emph{Step 2: the density $\omega$ and the quasi-eigenfunctions.}}
As in \cite{Castro-Z}, we will build simultaneously the density $\omega$ and a sequence of ``quasi-eigenfunctions'' (in the sense detailed below) which produce the concentration phenomenon we are looking for.

First of all, let us define a ``fractal'' partition of the interval $\Omega=[0,1]$. We set
\begin{equation} \label{eq:def_r}
r_j\,:=\,2^{-j}
\end{equation}
and we define a sequence of intervals $\left(I_j\right)_{j\geq2}$, centered in some $m_j$ and of lenght $r_j$:
$$
m_j\,:=\,\frac{r_j}{2}\,+\,\sum_{k=j+1}^{+\infty}r_k\qquad\mbox{ and }\qquad I_j\,:=\,\left]m_j\,-\,\frac{r_j}{2}\,,\,m_j\,+\,\frac{r_j}{2}\right]\,.
$$
In particular, we have $\,]0,1/2]=\bigcup_{j\geq2}I_j$.

Now, we define the positive sequences $\left(\veps_j\right)_j\searrow0^+\,$ and $\,\left(h_j\right)_j\nearrow+\infty\,$  by
\begin{equation} \label{eq:def_h-e}
h_j\,:=\,\exp\left(\psi^{-1}(2^{N j})\right)\qquad\mbox{ and }\qquad\veps_j\,h_j\,=\,\log h_j\;\psi\left(\log h_j\right)\,,
\end{equation}
for a suitable $N\in\N$ to be chosen later. We will see in a while that the particular definition of $\veps_j$ will imply the coefficient $\omega$ to have exactly the right modulus of continuity, i.e. to fulfill relation \eqref{eq:omega-psi}.
Let us note that these choices are coherent with the ones performed in \cite{Castro-Z}, which correspond to the case $\psi=\Id$. As a matter of fact, there the coefficient $\omega$ was H\"older continuous of any order because, actually, it was just slightly worse than log-Lipschitz (it had an extra logarithmic loss, represented by the $(\log)^2$ in the definition of $\veps_j$).

\begin{rem} \label{r:h-r}
Up to take the quantity\footnote{Recall that we denote by $[\sigma]$ the biggest integer less than or equal to $\sigma$.} $\left[\psi^{-1}(2^{N j})\right]+1$ in the definition of $h_j$ in \eqref{eq:def_h-e}, we can suppose that
$$
h_j\,r_j\,\equiv\,n_j\,\ra\,+\infty\,,\qquad\mbox{ with }\;\left(n_j\right)_j\,\subset\,\N\,.
$$
The claim $n_j\ra+\infty$ follows from the fact that, due to the hypothesis over $\psi$, its inverse $\psi^{-1}$ is strictly increasing and convex, so that
$\psi^{-1}(x)\geq k x$ for large $x$.
\end{rem}

We immediately fix $N$ big enough (with respect to the constant $M$ of Lemma \ref{l:ode_1}) such that
\begin{equation} \label{eq:cond_N}
\veps_j\,\leq\,\frac{1}{2\,M}\;,\qquad 5\,M\sum_{k=j+1}^{+\infty}\veps_k\,r_k\,\leq\,\veps_j\,r_j
\qquad\mbox{ and }\qquad 5\,M\sum_{k=1}^{j-1}\veps_k\,h_k\,r_k\,\leq\,\veps_j\,h_j\,r_j\,.
\end{equation}
Note that these inequalities are true for $N$ sufficiently large, due to the following considerations. Firstly, $\left(\veps_j\right)_j$ is decreasing to $0$ for any $N\geq2$, and
$\veps_1\ra0$ for $N\ra+\infty$. Moreover,
\begin{eqnarray*}
\veps_k\,r_k\,\left(\veps_j\,r_j\right)^{-1} & = & 2^{-(N+1)(k-j)}\;\frac{\psi^{-1}(2^{N k})}{\psi^{-1}(2^{N j})}\;\exp\left(-\,\psi^{-1}(2^{N k})\,+\,\psi^{-1}(2^{N j})\right) \\
\veps_k\,h_k\,r_k\,\left(\veps_j\,h_j\,r_j\right)^{-1} & = & 2^{(N-1)(k-j)}\;\frac{\psi^{-1}(2^{N k})}{\psi^{-1}(2^{N j})}\,;
\end{eqnarray*}
therefore, it is
 easy to see that $\veps_k\,r_k\left(\veps_j\,r_j\right)^{-1}\leq\delta_N^{(k-j)}$, with $\delta_N\ra0$ for $N\ra+\infty$, and that
$\veps_k\,h_k\,r_k\left(\veps_j\,h_j\,r_j\right)^{-1}\leq\Gamma_N^{(k-j)}$, with $\Gamma_N\ra+\infty$ for $N\ra+\infty$.

We can now define the coefficient $\omega$: taking the $\alpha_{\veps_j}$'s introduced in Lemma \ref{l:ode_1}, we set
\begin{equation} \label{eq:def_omega}
\omega(x)\,:=\,\left\{\begin{array}{ll}
\alpha_{\veps_j}\left(h_j\,(x-m_j)\right) & \quad\mbox{if }\;x\,\in\,I_j \\[1ex]
4\,\pi^2 & \quad\mbox{if }\;x\,\in\,[0,1]\setminus\bigcup_{j\geq2}I_j\,.
\end{array}
\right.
\end{equation}

Note that $\omega\in\mc{C}^\infty$ in the interval $\,]0,1]$, because $\alpha_\veps\in\mc{C}^\infty(\R)$ and $\omega\equiv4\pi^2$ in a neighborhood of the extremes
of $I_j$, due to properties $(i)$ and $(ii)$ of Lemma \ref{l:ode_1} (see also Remark \ref{r:h-r}).

We claim that $\omega$, defined in this way, satisfies inequality \eqref{eq:omega-psi} over the whole $[0,1]$. Indeed, thanks to properties $(i)$ and $(iii)$ of Lemma \ref{l:ode_1}, we have (recall that $|x-y|<1/2$)
\begin{eqnarray*}
\left|\alpha_{\veps_j}\right|_{\psi,\R} & := & \sup_{x,y\in\R}\frac{\left|\alpha_{\veps_j}(x)-\alpha_{\veps_j}(y)\right|}{|x-y|\,\bigl|\log|x-y|\bigr|\,\psi\left(\bigl|\log|x-y|\bigr|\right)} \\
& \leq &  \sup_{x,y\in[0,1]}\frac{\left|\alpha_{\veps_j}(x)-\alpha_{\veps_j}(y)\right|}{|x-y|\,\bigl|\log|x-y|\bigr|\,\psi\left(\bigl|\log|x-y|\bigr|\right)}\;\leq\;\,C\,M\,\veps_j\,.
\end{eqnarray*}
Therefore, for all $j\geq2$, we get
\begin{eqnarray*}
\left|\omega\right|_{\psi,I_j} & := & \sup_{x,y\in I_j}\frac{\left|\omega(x)-\omega(y)\right|}{|x-y|\,\bigl|\log|x-y|\bigr|\,\psi\left(\bigl|\log|x-y|\bigr|\right)} \\
& \leq &  \sup_{x,y\in\R}\frac{\left|\alpha_{\veps_j}(h_j(x-m_j))-\alpha_{\veps_j}(h_j(y-m_j))\right|}{|x-y|\,\bigl|\log|x-y|\bigr|\,\psi\left(\bigl|\log|x-y|\bigr|\right)} \\
& \leq &  h_j\;\sup_{h_j|x-y|<1/2}\frac{\left|\alpha_{\veps_j}(h_j(x-m_j))-\alpha_{\veps_j}(h_j(y-m_j))\right|}{h_j\,|x-y|\,\bigl|\log|x-y|\bigr|\,\psi\left(\bigl|\log|x-y|\bigr|\right)} \\
& \leq & \frac{C\,M\,\veps_j\,h_j}{\log h_j\;\psi\left(\log h_j\right)}\;\leq\;C\,M\,.
\end{eqnarray*}
Now we set $I_z$ to be the interval of the family $\left(I_j\right)_j$ such that $z\in I_z$, and we denote by $\ell_z$ and $r_z$ its left and right extremes respectively. Then
\begin{eqnarray}
\sup_{x,y\in[0,1]}\frac{\left|\omega(x)-\omega(y)\right|}{|x-y|\,\bigl|\log|x-y|\bigr|\,\psi\left(\bigl|\log|x-y|\bigr|\right)} & \leq &
\sup_{0\leq x\leq y\leq1}\frac{\left|\omega(x)-\omega(r_x)\right|+\left|\omega(\ell_y)-\omega(y)\right|}{|x-y|\,\bigl|\log|x-y|\bigr|\,\psi\!\left(\bigl|\log|x-y|\bigr|\right)} \label{est:mod_cont} \\
& \leq & 2\,\sup_{j\geq2}\left|\omega\right|_{\psi,I_j}\,, \nonumber
\end{eqnarray}
where we have used also the fact that $\omega(r_x)=\omega(\ell_y)$ (see also Remark \ref{r:h-r}).
So, we gather that $\omega$ fulfills \eqref{eq:omega-psi} with $K=2CM$ over the whole $[0,1]$.

Finally, thanks to issue $(ii)$ in Lemma \ref{l:ode_1} and the first relation in \eqref{eq:cond_N}, we have also
$$
2\,\pi^2\,\leq\,\omega(x)\,\leq\,8\,\pi^2\qquad\qquad\forall\;x\,\in\,[0,1].
$$

The following step consists then in defining the sequence of quasi-eigenfunctions $\left(\phi_j\right)_{j\geq2}$ as the solutions of
\begin{equation} \label{eq:def_phi}
\left\{\begin{array}{ll}
\phi_j''\,+\,h_j^2\,\omega(x)\,\phi_j\,=\,0 & \qquad\mbox{ for }\;x\,\in\,[0,1] \\[1ex]
\phi_j(m_j)\,=\,1\,,\quad\phi'_j(m_j)\,=\,0\,. & 
\end{array}\right.
\end{equation}
As already pointed out in \cite{Castro-Z}, these functions do not satisfy the null boundary conditions at $x=0$ and $x=1$. However, as we will see in a while, they are concentrated in the interior of each interval
$I_j$, so that their values at the extremes of $[0,1]$ are exponentially small. This is the reason why we refer to them as ``quasi-eigenfunctions''.

Let us observe that, due to the definition of $\omega$ in \eqref{eq:def_omega}, $\phi_j$ satisfies
\begin{equation} \label{eq:phi-I_j}
\left\{\begin{array}{l}
\phi_j''\,+\,h_j^2\,\alpha_{\veps_j}\bigl(h_j(x-m_j)\bigr)\,\phi_j\,=\,0 \\[1ex]
\phi_j(m_j)\,=\,1\,,\quad\phi'_j(m_j)\,=\,0 
\end{array}\right.
\end{equation}
for $x\in I_j$, which implies that
\begin{equation} \label{eq:phi-w}
\phi_j(x)\,\equiv\,w_{\veps_j}\bigl(h_j(x-m_j)\bigr)
\end{equation}

in the interval $I_j$, where $w_{\veps_j}$ are the functions introduced in Lemma \ref{l:ode_1}, associated to $\alpha_{\veps_j}$. Therefore, combining Remarks \ref{r:ode} and \ref{r:h-r},
after easy computations we infer that
\begin{eqnarray}
\int_{I_j}\left|\phi_j(x)\right|^2\,dx & \geq & \frac{C}{h_j^3} \label{est:en_interior} \\
\bigl|\phi_j(m_j\,\pm\,r_j/2)\bigr|^2\,+\,\bigl|\phi'_j(m_j\,\pm\,r_j/2)\bigr|^2 & = & \exp\bigl(-\,\veps_j\,h_j\,r_j\bigr)\,. \label{est:en_extreme}
\end{eqnarray}
Due to the definitions \eqref{eq:def_r} and \eqref{eq:def_h-e} of our sequences, we have
$$
\frac{\veps_j\,h_j\,r_j\,}{\log h_j}\,=\,2^{-j}\;\psi\left(\log h_j\right)\,=\,2^{(N-1)j}\,\longrightarrow\,+\infty\qquad\mbox{ for }\;j\,\ra\,+\infty\,.
$$
From this property, we deduce that, roughly speaking, the energy of the quasi-eigenfunctions $\phi_j$ is concentrated inside the subintervals $I_j$. Formally, for any fixed
$p>0$,
\begin{equation} \label{est:en_conc}
\exp\bigl(-\,\veps_j\,h_j\,r_j\bigr)\,\leq\,C_p\,h_j^{-p}\,.
\end{equation}

We will show now that the energy remains exponentially small also at $x=0$ and $x=1$. As a matter of fact, we fix a $x\leq m_j-r_j/2$ and we estimate
$$
E_{\phi_j}(x)\,:=\,4\,\pi^2\,h_j^2\,\left|\phi_j(x)\right|^2\,+\,\left|\phi_j'(x)\right|^2\,.
$$
By Lemma \ref{l:ode_2}, and recalling the definition of $\omega$ in \eqref{eq:def_omega} and property $(iii)$ of Lemma \ref{l:ode_1}, we infer
\begin{eqnarray*}
E_{\phi_j}(x) & \leq & E_{\phi_j}(m_j-r_j/2)\;\exp\left(h_j\int_x^{m_j-r_j/2}\bigl|4\,\pi^2\,-\,\omega(y)\bigr|\,dy\right) \\
&\leq &  E_{\phi_j}(m_j-r_j/2)\;\exp\left(M\,h_j\,\sum_{k=j+1}^{+\infty}\veps_k\,r_k\right)\,.
\end{eqnarray*} 
Taking into account \eqref{est:en_extreme}, in order to control the first term on the right hand side, and the second condition in \eqref{eq:cond_N}, in order to control
the series in the exponential term, we find
\begin{eqnarray*}
E_{\phi_j}(x) & \leq & 4\,\pi^2\,h_j^2\,\exp\bigl(-\,\veps_j\,h_j\,r_j\bigr)\;\exp\left(\frac{1}{5}\,h_j\,\veps_j\,r_j\right) \\
&\leq &  4\,\pi^2\,h_j^2\;\exp\left(-\,\frac{4}{5}\,\veps_j\,h_j\,r_j\right)\,.
\end{eqnarray*} 
Therefore, we have found that for any $x\leq m_j-r_j/2$, for all $p>0$,
\begin{equation} \label{est:en_0}
\bigl|\phi_j(x)\bigr|^2\,+\,\bigl|\phi_j'(x)\bigr|^2\,\leq\,C_p\,h_j^{-p}\,;
\end{equation}
in particular, this relation holds true also at $x=0$.

Let us perform the analysis at the point $x=1$. As $\omega\equiv4\pi^2$ in $\,]1/2,1]$, if we define
$$
\wtilde{E}_{\phi_j}(x)\,:=\,h_j^2\,\omega(x)\,\left|\phi_j(x)\right|^2\,+\,\left|\phi_j'(x)\right|^2\,,
$$
Lemma \ref{l:ode_2} gives us $\wtilde{E}_{\phi_j}(1)\,\leq\,\wtilde{E}_{\phi_j}(1/2)$. Now, for any $m_j+r_j/2\leq x\leq1/2$, we use the same lemma: keeping in mind also definition \eqref{eq:def_omega}, the positivity of $\omega$ and property $(iii)$ in Lemma \ref{l:ode_1}, we discover that
\begin{eqnarray*}
\wtilde{E}_{\phi_j}(x) & \leq & \wtilde{E}_{\phi_j}(m_j+r_j/2)\;\exp\left(\int_{m_j+r_j/2}^x\frac{\left|\omega'(y)\right|}{\omega(y)}\;dy\right) \\
& \leq & \wtilde{E}_{\phi_j}(m_j+r_j/2)\;\exp\left(M\sum_{k=1}^{j-1}\veps_k\,h_k\,r_k\right) \\
& \leq & h_j^2\,\exp\left(-\,\frac{4}{5}\,\veps_j\,h_j\,r_j\right)\,,
\end{eqnarray*}
where in the last step we have used also the third condition in \eqref{eq:cond_N} and estimate \eqref{est:en_extreme}. Then, for any $m_j+r_j/2\leq x\leq1/2$,
we get
\begin{equation} \label{est:en_1}
\bigl|\phi_j(x)\bigr|^2\,+\,\bigl|\phi_j'(x)\bigr|^2\,\leq\,C_p\,h_j^{-p}\,;
\end{equation}
for all $p>0$; in particular, \eqref{est:en_1} holds true also at $x=1/2$, and therefore at $x=1$.

\paragraph{\emph{Step 3: lack of the boundary observability.}}
We now prove that observability estimate \eqref{est:obs_log} fails for the density $\omega$ constructed in the previous step, for any $T>0$ and any $m\in\N$.

Note that, thanks to the definition of $\left(\phi_j\right)_j$, for any $j\in\N$ the function
$$
v_j(t,x)\,:=\,\phi_j(x)\,e^{i h_j t}
$$
is a solution of the first equation in $\eqref{eq:we}$, but it does not necessarily fulfill the null boundary condition at $x=0$ and $x=1$, because the $\phi_j$'s don't.

So, let us define the sequence $\left(z_j\right)_j$ in the following way: for any $j$, $z_j$ is the unique solution to the system
\begin{equation} \label{eq:def_z}
\left\{\begin{array}{ll}
        \omega(x)\d_t^2z_j\,-\,\d^2_xz_j\,=\,0 & \quad\mbox{ in }\;\Omega\times\,]0,T[ \\[1ex]
        z_j(t,0)\,=\,-\,v_j(t,0)\,=\,-\,e^{i h_j t}\,\phi_j(0) & \quad\mbox{ in }\;]0,T[ \\[1ex]
z_j(t,1)\,=\,-\,v_j(t,1)\,=\,-\,e^{i h_j t}\,\phi_j(1) & \quad\mbox{ in }\;]0,T[ \\[1ex]
        z_j(0,x)\,=\,\d_tz_j(0,x)\,=\,0 & \quad\mbox{ in }\;\Omega\,.
       \end{array} 
\right.
\end{equation}
Then, defined $u_j$ (for all $j\geq1$) as
$$
u_j\,:=\,v_j\,+\,z_j\,,
$$
 the sequence $\left(u_j\right)_j$ is a family of true solutions to system \eqref{eq:we}. Our aim is now proving that
\begin{equation} \label{lim:LL+}
\lim_{j\ra+\infty}\;\frac{\int_\Omega\left(\omega(x)\,\bigl|\d_tu_j(0,x)\bigr|^2\,+\,\bigl|\d_xu_j(0,x)\bigr|^2\right)dx}{\int_0^T\bigl|\d_t^m\d_xu_j(t,0)\bigr|^2\,dt}\;=\;
+\infty
\end{equation}
for any $T>0$ and any $m\in\N$.

Let us start by considering the numerator. By definitions and the lower bound for $\omega$, we have
\begin{eqnarray*}
\int_\Omega\left(\omega(x)\,\bigl|\d_tu_j(0,x)\bigr|^2\,+\,\bigl|\d_xu_j(0,x)\bigr|^2\right)dx & = &
\left(\omega(x)\,\bigl|\d_tv_j(0,x)\bigr|^2\,+\,\bigl|\d_xv_j(0,x)\bigr|^2\right)dx \\
& = & \int_\Omega\left(h_j^2\,\omega(x)\,\bigl|\phi_j(x)\bigr|^2\,+\,\bigl|\phi_j'(x)\bigr|^2\right)dx \\
& \geq & 2\,\pi^2\,h_j^2\int_{\Omega}\bigl|\phi_j(x)\bigr|^2\,dx\,.  
\end{eqnarray*}
Taking the integral just on the interval $I_j$ rather than on the whole $\Omega=[0,1]$, and recalling estimate \eqref{est:en_interior}, it follows that
\begin{equation} \label{est:LL+_num}
\int_\Omega\left(\omega(x)\,\bigl|\d_tu_j(0,x)\bigr|^2\,+\,\bigl|\d_xu_j(0,x)\bigr|^2\right)dx\,\geq\,C\,h_j^{-1}\,.
\end{equation}

We now focus on the denominator in \eqref{lim:LL+}:
\begin{eqnarray*}
\int_0^T\bigl|\d_t^m\d_xu_j(t,0)\bigr|^2\,dt & \leq & \int_0^T\left(\bigl|\d_t^m\d_xv_j(t,0)\bigr|^2\,+\,\bigl|\d_t^m\d_xz_j(t,0)\bigr|^2\right)dt \\
& \leq & T\,\bigl|\phi_j'(0)\bigr|^2\,\left(h_j\right)^m\,+\,\int_0^T\bigl|\d_t^m\d_xz_j(t,0)\bigr|^2\,dt\,.
\end{eqnarray*}
Keeping in mind \eqref{eq:def_z}, it is
 possible to see that the derivative $\d_t^mz_j$ fulfills the system
$$
\left\{\begin{array}{ll}
        \omega(x)\d_t^2\left(\d_t^mz_j\right)\,-\,\d^2_x\left(\d_t^mz_j\right)\,=\,0 & \quad\mbox{ in }\;\Omega\times\,]0,T[ \\[1ex]
        \d_t^mz_j(t,0)\,=\,-\,\left(i\,h_j\right)^m\,e^{i h_j t}\,\phi_j(0) & \quad\mbox{ in }\;]0,T[ \\[1ex]
\d_t^mz_j(t,1)\,=\,-\,\left(i\,h_j\right)^m\,e^{i h_j t}\,\phi_j(1) & \quad\mbox{ in }\;]0,T[ \\[1ex]
        \d_t^mz_j(0,x)\,=\,\d_t^{m+1}z_j(0,x)\,=\,0 & \quad\mbox{ in }\;\Omega\,.
       \end{array} 
\right.
$$
Then, applying Proposition \ref{p:z} entails
$$
\int^T_0\bigl|\d_t^m\d_xz_j(t,0)\bigr|^2\,dt\,\leq\,C(T)\;\omega^*\,\left(h_j\right)^{2(m+3)}\left(\bigl|\phi_j(0)\bigr|^2\,+\,\bigl|\phi_j(1)\bigr|^2\right)\,.
$$
Inserting this relation into the previous one, we find
\begin{equation} \label{est:LL+_den}
\int_0^T\bigl|\d_t^m\d_xu_j(t,0)\bigr|^2\,dt\,\leq\,C(T,\omega^*)\,\left(h_j\right)^{2(m+3)}\left(\bigl|\phi_j(0)\bigr|^2\,+\,\bigl|\phi_j(1)\bigr|^2\,+\,\bigl|\phi_j'(0)\bigr|^2\right)\,.
\end{equation}

Now, recall that inequality \eqref{est:en_0} is true both at $x=0$ and $x=1$. So, if we put estimates \eqref{est:LL+_num} and \eqref{est:LL+_den} together, for all
$T>0$, all $m\in\N$ and all $p>0$ we find, for a suitable $C_p>0$,
$$
\frac{\int_\Omega\left(\omega(x)\,\bigl|\d_tu_j(0,x)\bigr|^2\,+\,\bigl|\d_xu_j(0,x)\bigr|^2\right)dx}{\int_0^T\bigl|\d_t^m\d_xu_j(t,0)\bigr|^2\,dt}\;\geq\;
\frac{C\,h_j^{-1}}{C(T,\omega^*)\,\left(h_j\right)^{2(m+3)}\,C_p\,h_j^{-p}}\;,
$$
which diverges to $+\infty$ if we take $p>2m+7$.

Relation \eqref{lim:LL+} is then satisfied, and Theorem \ref{th:LL+} is completely proved.

\subsubsection{Proof of Theorem \ref{th:L-LL}}

Let us spend here a few words on the proof of Theorem \ref{th:L-LL}.

It can be carried out following the same arguments as before, even if we have to pay attention to the choice
of the sequences $\left(\veps_j\right)_j$ and $\left(h_j\right)_j$.

So, we define $\left(r_j\right)_j$ and the subintervals $\left(I_j\right)_j$ as before, but we replace \eqref{eq:def_h-e} by
$$
h_j\,:=\,\frac{1}{\lambda^{-1}\left(2^{N j}\right)}\qquad\mbox{ and }\qquad\veps_j\,h_j\,=\,\lambda(1/h_j)\,\log h_j\,,
$$
where $N$ is a suitable integer, to be fixed later.

Notice that, by hypothesis, $\log\sigma\geq\lambda(1/\sigma)$ for $\sigma\geq\sigma_0$. Taking $\sigma=e^x$ and recalling that $\lambda^{-1}$ is decreasing,
we infer
$$
e^x\,\leq\,\frac{1}{\lambda^{-1}(x)}\qquad\Longrightarrow\qquad h_j\,\geq\,\exp\left(2^{N j}\right)\,.
$$
From this we get, in particular, that $h_j\;r_j\,\ra\,+\infty$, as in Remark \ref{r:h-r}. Moreover, up to take the entire parts,
we can suppose $\left(h_j\,r_j\right)_j\,\subset\,\N$.

It is
 also easy to see (exactly as before) that we can fix $N\in\N$ large enough, such that
$$
\veps_j\,\leq\,\frac{1}{2\,M}\;,\qquad 5\,M\sum_{k=j+1}^{+\infty}\veps_k\,r_k\,\leq\,\veps_j\,r_j
\qquad\mbox{ and }\qquad 5\,M\sum_{k=1}^{j-1}\veps_k\,h_k\,r_k\,\leq\,\veps_j\,h_j\,r_j\,.
$$

Now, we can define the sequence $\left(\omega_j\right)_{j\in\N}$ in the following way: for any $j\in\N$ we set
\begin{equation} \label{eq:def_omega_lambda}
\omega_j(x)\,:=\,\left\{\begin{array}{ll}
\alpha_{\veps_j}\bigl(h_j\,(x-m_j)\bigr) & \quad\mbox{if }\;x\,\in\,I_j \\[1ex]
4\,\pi^2 & \quad\mbox{if }\;x\,\in\,[0,1]\setminus I_j\,.
\end{array}
\right.
\end{equation}
The properties of $\omega_j$ are of easy verification, and they can be proved arguing as above. In particular, we have that $\omega_j$ fulfills \eqref{eq:omega-lambda} with
$K_j=C M \log h_j$, for some suitable constant $C$ independent of $j\in\N$.

Then we define the sequence of quasi-eigenfunctions $\left(\phi_j\right)_{j\geq2}$ as the solutions of
\begin{equation} \label{eq:def_phi-lambda}
\left\{\begin{array}{ll}
\phi_j''\,+\,h_j^2\,\omega_j(x)\,\phi_j\,=\,0 & \qquad\mbox{ for }\;x\,\in\,[0,1] \\[1ex]
\phi_j(m_j)\,=\,1\,,\quad\phi'_j(m_j)\,=\,0\,. & 
\end{array}\right.
\end{equation}
Due to the definition of $\omega_j$ in \eqref{eq:def_omega_lambda}, for $x\in I_j$, $\phi_j$ satisfies
$$   
\left\{\begin{array}{l}
\phi_j''\,+\,h_j^2\,\alpha_{\veps_j}\bigl(h_j\,(x-m_j)\bigr)\,\phi_j\,=\,0 \\[1ex]
\phi_j(m_j)\,=\,1\,,\quad\phi'_j(m_j)\,=\,0\,,
\end{array}\right.
$$   
which implies, exactly as before, that
$$   
\phi_j(x)\,\equiv\,w_{\veps_j}\bigl(h_j\,(x-m_j)\bigr)
$$   
in the interval $I_j$, where $w_{\veps_j}$ are the functions introduced in Lemma \ref{l:ode_1}, associated to $\alpha_{\veps_j}$. Therefore, also in this case we infer that
\begin{eqnarray}
\int_{I_j}\left|\phi_j(x)\right|^2\,dx & \geq & \frac{C}{h_j^3} \label{est:en_interior-lambda} \\
\bigl|\phi_j(m_j\,\pm\,r_j/2)\bigr|^2\,+\,\bigl|\phi'_j(m_j\,\pm\,r_j/2)\bigr|^2 & = & \exp\bigl(-\,\veps_j\,h_j\,r_j\bigr)\,. \label{est:en_extreme-lambda}
\end{eqnarray}
Due to the definitions of our sequences, we have moreover that
\begin{equation} \label{eq:divergence_lambda}
\frac{\veps_j\,h_j\,r_j\,}{\log h_j}\,=\,\lambda\bigl(1/h_j\bigr)\,r_j\,=\,2^{(N-1)j}\,\longrightarrow\,+\infty\qquad\mbox{ for }\;j\,\ra\,+\infty\,.
\end{equation}
Once again, this property tells us that the energy of the quasi-eigenfunctions $\phi_j$ is concentrated inside the subintervals $I_j$.

Moreover, as done before, it is
 easy to see that the $j$-th energy is small at $x=0$ and $x=1$:
$$
\bigl|\phi_j(x)\bigr|^2\,+\,\bigl|\phi'_j(x)\bigr|^2\,\leq\,C\,h_j^2\,\exp\biggl(-\,\frac{4}{5}\,\veps_j\,h_j\,r_j\biggr)\,.
$$

Now, the proof of the lack of the boundary observability follows as before: we define
$$
v_j(t,x)\,:=\,\phi_j(x)\,e^{i h_j  t}\qquad\mbox{ and }\qquad u_j\,:=\,v_j\,+\,z_j\,,
$$
where $z_j$ is still defined by \eqref{eq:def_z}.
Actually we don't have to consider higher horder derivatives, and we have just to show that, for all $T>0$,
$$
\lim_{j\ra+\infty}\;\frac{\int_\Omega\left(\omega(x)\,\bigl|\d_tu_j(0,x)\bigr|^2\,+\,\bigl|\d_xu_j(0,x)\bigr|^2\right)dx}{\int_0^T\bigl|\d_xu_j(t,0)\bigr|^2\,dt}\;=\;
+\infty.
$$

Aguing exactly as before and keeping in mind the estimates for the energy inside the intervals $I_j$'s and at the extremities $x=0$ and $x=1$, we are led to the inequality
$$
\frac{\int_\Omega\left(\omega(x)\,\bigl|\d_tu_j(0,x)\bigr|^2\,+\,\bigl|\d_xu_j(0,x)\bigr|^2\right)dx}{\int_0^T\bigl|\d_xu_j(t,0)\bigr|^2\,dt}\,\geq\,\frac{C\,h_j^{-1}\,\exp\left(C'\,\veps_j\,h_j\,r_j\right)}{C''\,h_j^6}\,,
$$
for some suitable positive constants $C$, $C'$ and $C''$. At this point, we use \eqref{eq:divergence_lambda}, which completes the proof of Theorem \ref{th:L-LL}.

\section{Controllability results} \label{s:control}

As discussed in the introduction, the observability results stated in this paper have direct counterparts at the level of controllability: the present section is devoted to them.

Consider the controlled wave equation:
\begin{equation} \label{eq:control}
\left\{\begin{array}{ll}
        \omega(x)\d_t^2y\,-\,\d^2_xy\,=\,0 & \quad\mbox{ in }\;\Omega\times\,]0,T[ \\[1ex]
        y(t,0)\,= f(t), \quad \,y(t,1)\,=\,0 & \quad\mbox{ in }\;]0,T[ \\[1ex]
        y(0,x)\,=\,y_0(x)\,,\quad \d_ty(0,x)\,=\,y_1(x) & \quad\mbox{ in }\;\Omega\,.
       \end{array}
\right.
\end{equation}
We are interested in the problem of \emph{null controllability}: namely, whether for a given initial datum $(y_0, y_1)$ there exists a control $f=f(t)$ such that the solution of \eqref{eq:control} satisfies
\begin{equation} \label{eq:control_final}
  y(T,x)\,=\,\d_ty(T,x)\,=\,0 \qquad\mbox{ in }\;\Omega\,.
\end{equation}

Let us remark that, in general, the product by a coefficient $\omega\in\mc{A}$ is not continuous from $H^s$ into itself. For instance, the product by a log-Lipschitz function maps continuously $H^s\ra H^s$ if and only if $|s|<1$ (see \cite{C-L}). For this reason, as already done in e.g. \cite{Castro-Z} and \cite{FC-Z}, we will assume a priori that $\omega y_1$ belongs to $H^{-1}$.

First of all, we analyze the consequences of the sharp observability result in Theorem \ref{th:Z}. In this case, it is well known that the observability inequality \eqref{est:obs_Z} is equivalent to the fact that for all $(y_0, \omega y_1) \in L^2(\Omega) \times H^{-1}(\Omega)$ there exists a control $f \in L^2(0, T)$ such that the solution of \eqref{eq:control} satisfies \eqref{eq:control_final} (see also \cite{Z-2006}).
\begin{thm} \label{th:control_Z}
Consider system \eqref{eq:control}, where the coefficient $\omega$ satisfies the hyperbolicity condition \eqref{eq:hyp} and the integral Zygmund assumption \eqref{est:Z_1}. Let the time $T>2T_\omega$, where $T_\omega$ is defined by \eqref{def:vel}.

Then, for any $(y_0, \omega y_1) \in L^2(\Omega) \times H^{-1}(\Omega)$, there exists a control
$f\in L^2(0,T)$ such that the solution $y$ to system \eqref{eq:control} satisfies conditions
\eqref{eq:control_final}. \\
Moreover, there exists a constant $C$, just depending on $T$ and on $\omega_*$, $\omega^*$ and $|\omega|_{Z}$, such that one can choose $f$ fulfilling
$$
\|f\|_{L^2(0,T)}\,\leq\,C\,\biggl(\left\|y_0\right\|_{L^2(\Omega)}\,+\,\left\|y_1\right\|_{H^{-1}(\Omega)}\biggr)\,.
$$
\end{thm}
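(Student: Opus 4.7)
My plan is to apply the classical Hilbert Uniqueness Method (HUM) of J.-L.\ Lions: the boundary observability estimate established in Theorem \ref{th:Z} will translate, by duality, into the announced null controllability result. The adjoint system is the homogeneous problem \eqref{eq:we}; given $(\varphi_0,\varphi_1)\in H^1_0(\Omega)\times L^2(\Omega)$, I denote by $\varphi$ its associated solution. Multiplying \eqref{eq:control} by $\varphi$ and integrating by parts in space and time, using the boundary conditions on both $y$ and $\varphi$, one formally derives the key duality identity
\begin{equation} \label{eq:hum-duality}
\int_0^T f(t)\,\d_x\varphi(t,0)\,dt\;=\;\langle\omega\,y_1,\varphi_0\rangle_{H^{-1},H^1_0}\,-\,\int_\Omega \omega\,y_0\,\varphi_1\,dx\,,
\end{equation}
valid precisely when $y(T,\cdot)=\d_t y(T,\cdot)=0$. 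This identity plays a double role: it defines by transposition the notion of solution of \eqref{eq:control} corresponding to $f\in L^2(0,T)$ with data $(y_0,\omega y_1)\in L^2\times H^{-1}$, and it dictates the shape of the control.

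Next, following HUM, I would introduce the convex, continuous functional $J\colon H^1_0(\Omega)\times L^2(\Omega)\to\R$ given by
$$
J(\varphi_0,\varphi_1)\,:=\,\frac{1}{2}\int_0^T\bigl|\d_x\varphi(t,0)\bigr|^2\,dt\,+\,\langle \omega\,y_1,\varphi_0\rangle_{H^{-1},H^1_0}\,-\,\int_\Omega\omega\,y_0\,\varphi_1\,dx\,.
$$
The quadratic term is finite thanks to the hidden regularity bound \eqref{est:boundary_en} of Subsection \ref{ss:E}, while the linear terms are well defined since $\omega\in L^\infty$ gives $\omega y_0\in L^2$ and $\omega y_1\in H^{-1}$ by assumption. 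Coercivity is the decisive point, and it is here that Theorem \ref{th:Z} enters: combined with Cauchy-Schwarz on the linear part, it yields
$$
J(\varphi_0,\varphi_1)\,\geq\,\frac{1}{2C}\,\bigl\|(\varphi_0,\varphi_1)\bigr\|^2_{H^1_0\times L^2}\,-\,\bigl(\|\omega y_1\|_{H^{-1}}+\|\omega y_0\|_{L^2}\bigr)\,\bigl\|(\varphi_0,\varphi_1)\bigr\|_{H^1_0\times L^2}\,,
$$
so by the direct method of the calculus of variations $J$ admits a unique minimizer $(\what\varphi_0,\what\varphi_1)$.

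The Euler-Lagrange equation at this minimizer reads, for every test pair $(\psi_0,\psi_1)\in H^1_0\times L^2$ and $\psi$ the corresponding solution of \eqref{eq:we},
$$
\int_0^T\d_x\what\varphi(t,0)\,\d_x\psi(t,0)\,dt\,+\,\langle\omega y_1,\psi_0\rangle\,-\,\int_\Omega\omega\,y_0\,\psi_1\,dx\,=\,0\,.
$$
I would then set $f(t):=-\d_x\what\varphi(t,0)$ and define $y$ as the transposition solution of \eqref{eq:control} with this control; identity \eqref{eq:hum-duality} forces $y(T,\cdot)=\d_t y(T,\cdot)=0$. The $L^2$ bound on $f$ follows from $J(\what\varphi_0,\what\varphi_1)\leq J(0,0)=0$ combined with the coercivity inequality, which gives $\|(\what\varphi_0,\what\varphi_1)\|_{H^1_0\times L^2}\leq C'\bigl(\|y_0\|_{L^2}+\|y_1\|_{H^{-1}}\bigr)$ and hence the claimed estimate.

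The main technical hurdle lies in rigorously setting up the transposition framework for \eqref{eq:control} in this low-regularity setting: since $\omega$ does not act as a continuous multiplier on $H^{\pm1}$ (exactly the reason the datum is prescribed as $\omega y_1\in H^{-1}$ rather than $y_1\in H^{-1}$), one must carefully check that the linear form $(\varphi_0,\varphi_1)\mapsto\int_0^T f(t)\d_x\varphi(t,0)\,dt -\langle\omega y_1,\varphi_0\rangle+\int_\Omega\omega y_0\,\varphi_1\,dx$ is continuous on $H^1_0\times L^2$ (which the hidden regularity estimate provides), and that the resulting transposition solution $y$ enjoys sufficient time continuity for the final state $(y(T),\d_t y(T))$ to be meaningful in the natural dual state space. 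It is at this point that the integral Zygmund assumption \eqref{est:Z_1}, through the boundary trace bound of Subsection \ref{ss:E}, is indispensable.
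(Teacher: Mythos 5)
Your proposal is correct and takes essentially the same route as the paper: for Theorem \ref{th:control_Z} the paper simply invokes the well-known equivalence, via HUM/transposition duality (citing \cite{Z-2006}), between the boundary observability inequality \eqref{est:obs_Z} of Theorem \ref{th:Z} and null controllability with $L^2(0,T)$ boundary controls, which is precisely the argument you write out (coercivity of the HUM functional from Theorem \ref{th:Z}, finiteness of its quadratic part from the hidden-regularity bound \eqref{est:boundary_en}, control given by the normal derivative of the minimizer). Up to an immaterial sign convention in your duality identity and the routine care you already flag in setting up the transposition solution, your detailed construction is the standard proof the paper is referring to.
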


Let us now turn our attention to the estimate in Theorem \ref{th:LL-LZ}, where the observability is proved with a loss of a finite number of derivates. At the control level, this is equivalent to the fact that all initial data $(y_0,\omega  y_1) \in L^2(\Omega) \times H^{-1}(\Omega)$ are controllable with controls in a larger space  $f \in H^{-m}(0, T)$.

\begin{thm} \label{th:control_loss}
Assume, this time, that the coefficient $\omega$ belongs to the integral classes $LL$ or $LZ$, defined respectively by conditions \eqref{est:LL_1} and \eqref{est:LZ_1}.  Let the time $T>2T_\omega$.

Then, for any $(y_0, \omega y_1) \in L^2(\Omega) \times H^{-1}(\Omega)$, there exist a $m\in\N$ and a control
$f\in H^{-m}(0,T)$ such that the solution $y$ to system \eqref{eq:control} satisfies conditions
\eqref{eq:control_final}. \\
Moreover, there exists a constant $C$, just depending on $T$ and on $\omega_*$, $\omega^*$ and $|\omega|_{LL}$ or $|\omega|_{LZ}$ respectively, such that one can choose $f$ fulfilling
$$
\|f\|_{H^{-m}(0,T)}\,\leq\,C\,\biggl(\left\|y_0\right\|_{L^2(\Omega)}\,+\,\left\|y_1\right\|_{H^{-1}(\Omega)}\biggr)\,.
$$
\end{thm}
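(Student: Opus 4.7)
I would follow the Hilbert Uniqueness Method of J.-L. Lions, dualised with the ``with loss'' observability estimate of Theorem \ref{th:LL-LZ}. As a preliminary, fix $(y_0,y_1)$ as in the statement and take a test initial datum $(\vphi_0,\vphi_1)$ satisfying the compatibility condition \eqref{eq:D^m_omega}; call $\vphi$ the associated solution to the homogeneous system \eqref{eq:we}. Multiplying \eqref{eq:control} by $\vphi$, integrating over $\Omega\times(0,T)$ and integrating by parts twice in both variables (using $\vphi(t,0)=\vphi(t,1)=0$, $y(t,1)=0$, $y(t,0)=f(t)$, and prescribing the final state \eqref{eq:control_final}), one obtains the duality identity
$$
\int_0^T f(t)\,\d_x\vphi(t,0)\,dt \;=\; \langle \omega y_0,\vphi_1\rangle \,-\, \langle \omega y_1,\vphi_0\rangle\,,
$$
where the first bracket is the $L^2(\Omega)$ inner product and the second is the $H^{-1}(\Omega)$--$H^1_0(\Omega)$ duality. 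Producing a control driving $(y_0,y_1)$ to rest in time $T$ thus amounts to finding $f$ realising this identity for every admissible test datum.

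Next I would set up the HUM space. On the set $\mc{D}$ of admissible test data, Theorem \ref{th:LL-LZ} guarantees that
$$
\bigl\|(\vphi_0,\vphi_1)\bigr\|_F^2 \;:=\; \int_0^T\bigl|\d_t^m\d_x\vphi(t,0)\bigr|^2\,dt
$$
is a norm, in fact one that dominates the energy norm $\|\cdot\|^2_{H^1_0\times L^2}$. Let $F$ denote the Hilbert space obtained by completing $\mc{D}$ for this norm; it embeds continuously into $H^1_0(\Omega)\times L^2(\Omega)$, so that the two linear forms $(\vphi_0,\vphi_1)\mapsto\langle\omega y_0,\vphi_1\rangle$ and $(\vphi_0,\vphi_1)\mapsto\langle\omega y_1,\vphi_0\rangle$ extend continuously to $F$. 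Therefore the functional
$$
J(\vphi_0,\vphi_1) \;:=\; \tfrac12\,\bigl\|(\vphi_0,\vphi_1)\bigr\|_F^2 \,-\, \langle\omega y_0,\vphi_1\rangle \,+\, \langle\omega y_1,\vphi_0\rangle
$$
is continuous, strictly convex and coercive on $F$, hence admits a unique minimizer $(\hat\vphi_0,\hat\vphi_1)\in F$, characterised by the Euler--Lagrange equation
$$
\int_0^T \d_t^m\d_x\hat\vphi(t,0)\,\d_t^m\d_x\vphi(t,0)\,dt \;=\; \langle\omega y_0,\vphi_1\rangle\,-\,\langle\omega y_1,\vphi_0\rangle \qquad \forall\,(\vphi_0,\vphi_1)\in F\,.
$$

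Comparing with the duality identity of the first paragraph, an $m$-fold integration by parts in time then identifies the sought control as $f\,:=\,(-1)^m\,\d_t^{2m}\bigl[\d_x\hat\vphi(\,\cdot\,,0)\bigr]\in H^{-m}(0,T)$; equivalently, $f$ is the unique element of $H^{-m}(0,T)$ whose pairing with $\d_x\vphi(\,\cdot\,,0)\in H^m(0,T)$ coincides with the left-hand side of the Euler--Lagrange equation. Testing that equation against $(\hat\vphi_0,\hat\vphi_1)$ itself and invoking \eqref{est:obs_log} yields the norm estimate
$$
\|f\|^2_{H^{-m}(0,T)}\;\leq\;\bigl\|(\hat\vphi_0,\hat\vphi_1)\bigr\|_F^2\;\leq\; C\,\bigl(\|y_0\|^2_{L^2(\Omega)}+\|y_1\|^2_{H^{-1}(\Omega)}\bigr)\,,
$$
as claimed. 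The most delicate point is the rigorous identification of $f$ as an $H^{-m}$-distribution: the repeated integration by parts produces boundary contributions at $t=0$ and $t=T$ which have to be handled by first restricting to test data whose boundary traces $\d_x\vphi(\,\cdot\,,0)$ lie in $H^m_0(0,T)$, then extending by density and exploiting the time reversibility and finite propagation speed of \eqref{eq:we} to truncate and translate test solutions freely.
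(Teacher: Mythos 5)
Your proposal is correct and is essentially the paper's own route: the paper does not spell out a proof of Theorem \ref{th:control_loss}, but simply invokes the standard duality/HUM equivalence between the observability estimate with loss \eqref{est:obs_log} and controllability with controls in $H^{-m}(0,T)$ (in the spirit of \cite{Z-2006} and of Theorem 3 in \cite{Castro-Z}), which is exactly the argument you carry out. The one fragile point you rightly single out -- the boundary contributions at $t=0$ and $t=T$ when identifying $f=(-1)^m\d_t^{2m}\bigl[\d_x\hat\vphi(\,\cdot\,,0)\bigr]$ -- is handled more cleanly not by restricting to test data with traces in $H^m_0(0,T)$ (whose density in the HUM space is unclear), but by inserting a smooth time cutoff $\rho\in\mc{C}^\infty_c(0,T)$, equal to $1$ on a subinterval of length larger than $2T_\omega$ (possible since $T>2T_\omega$ is strict), into the HUM functional, so that $f=(-1)^m\d_t^m\bigl[\rho\,\d_t^m\d_x\hat\vphi(\,\cdot\,,0)\bigr]$ pairs with every trace $\d_xu(\,\cdot\,,0)\in H^m(0,T)$ without boundary terms.
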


\begin{rem} \label{r:control_loss}
In view of Theorem \ref{th:L-LL}, we must have $m>0$.
\end{rem}

Note that, as showed in \cite{D-Z}, similar phenomena occur, for instance, in the control of the wave equation in planar networks, where, depending on the diophantine approximation properties of the mutual lengths of the strings entering in the network, a finite number of derivatives can be lost both at the observation and control level.

Similarly, as a consequence of the negative results in section \ref{s:sharp}, one can deduce that when the coefficients of the equation are too rough for observability inequalities to hold, the same occurs at the controllability level. In other words, the controllability results above fail whenever the observability ones do it.
\begin{thm} \label{th:no_control}
Let $\psi$ be as in the hypothesis of Theorem \ref{th:LL+}.

There exists a density function $\omega$ which verifies conditions \eqref{eq:hyp} and \eqref{eq:omega-psi}, and such that the following fact holds true: for any control time $T>0$, there exist initial data $(y_0, \omega y_1) \in L^2(\Omega) \times H^{-1}(\Omega)$ for which, for any $m\in\N$ and any control $f\in H^{-m}(0,T)$, the solution $y$ to system \eqref{eq:control}
doesn't satisfy \eqref{eq:control_final}.
\end{thm}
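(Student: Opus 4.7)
My strategy is to invoke the classical HUM-type duality between controllability and observability, and then to appeal to the negative result of Theorem~\ref{th:LL+}. I would take as $\omega$ the coefficient constructed in the proof of that theorem, and argue by contradiction: suppose that for some $m\in\N$ the asserted controllability holds for every initial datum $(y_0,\omega y_1)\in L^2(\Omega)\times H^{-1}(\Omega)$, with a linear estimate of the control norm in terms of the data.

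The first step is to establish, by the transposition method, the following duality principle: for each $m\geq0$, the stated null controllability of \eqref{eq:control} by controls $f\in H^{-m}(0,T)$ is equivalent to the observability inequality
\begin{equation}\label{est:obs_dual}
\left\|u_0\right\|^2_{H^1_0(\Omega)}+\left\|u_1\right\|^2_{L^2(\Omega)}\leq C\,\bigl\|\d_xu(\,\cdot\,,0)\bigr\|^2_{H^m_0(0,T)}
\end{equation}
for every solution $u$ of the adjoint problem \eqref{eq:we} with data $(u_0,u_1)$ in the class imposed by Remark~\ref{r:LL-LZ}(iii) (so that the trace is well defined in $H^m_0(0,T)$). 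Indeed, by a Riesz-representation argument for the linear functional $(u_0,u_1)\mapsto\langle(y_1,-y_0),(u(0),\d_tu(0))\rangle$ on the completion of smooth compatible data with respect to the right-hand side of \eqref{est:obs_dual}, one obtains a control $f\in H^{-m}(0,T)$ realizing \eqref{eq:control_final}; conversely, any such family of controls produces an observability estimate of the above form.

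The second step is to check that the sequence of solutions $(u_j)_j$ produced in Step~3 of the proof of Theorem~\ref{th:LL+} already contradicts \eqref{est:obs_dual} for every finite $m$. By Remark~\ref{r:counterex}(ii) the associated initial data are smooth away from $x=0$ and hence automatically satisfy all compatibility conditions. On the one hand, estimate \eqref{est:LL+_num} gives
$$\bigl\|(u_0^j,u_1^j)\bigr\|^2_{H^1_0\times L^2}\;\geq\;C\,h_j^{-1}.$$
On the other hand, applying Proposition~\ref{p:z} to each derivative $\d_t^kz_j$ for $0\leq k\leq m$, together with the explicit formula $\d_t^k\d_xv_j(t,0)=(ih_j)^k\,e^{ih_jt}\,\phi_j'(0)$, yields
\begin{equation*}
\bigl\|\d_xu_j(\,\cdot\,,0)\bigr\|^2_{H^m_0(0,T)}\;\leq\;C(T,\omega^*)\,h_j^{2(m+3)}\,\Bigl(|\phi_j(0)|^2+|\phi_j(1)|^2+|\phi_j'(0)|^2\Bigr).
\end{equation*}
By \eqref{est:en_0}--\eqref{est:en_1} the three boundary quantities on the right decay faster than any negative power of $h_j$, so the ratio of left- to right-hand side in \eqref{est:obs_dual} evaluated along $(u_j)$ diverges, contradicting \eqref{est:obs_dual} and completing the proof.

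The main obstacle will be the transposition step in our low-regularity framework: since $\omega$ is only bounded, multiplication by $\omega$ is not continuous in general Sobolev spaces, so the very definition of a transposition solution of \eqref{eq:control} and the identification of the correct admissible class of adjoint data (taking into account the loss-of-derivatives condition of Remark~\ref{r:LL-LZ}(iii)) must be carried out with care. Once this is done, the remaining quantitative step is a direct reuse of the estimates already established in the proof of Theorem~\ref{th:LL+}, with the routine observation that the $H^m_0$ norm in time only introduces extra polynomial factors in $h_j$, harmless against the superpolynomial decay stemming from the exponential localization of $\phi_j$ inside $I_j$.
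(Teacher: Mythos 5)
Your overall route is the same as the paper's: reduce the controllability statement to an observability statement by duality and then invoke the counterexample constructed for Theorem~\ref{th:LL+} (indeed, the second half of your argument is literally a re-run of Step~3 of that proof, with the harmless remark that the extra time derivatives in the $H^m$ norm only cost polynomial factors in $h_j$; that part is fine). The paper's proof does exactly this, referring to the proof of Theorem~3 in \cite{Castro-Z} for the functional-analytic details.

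The genuine gap is in your contradiction setup. The theorem asserts the existence of a \emph{single} datum $(y_0,\omega y_1)\in L^2\times H^{-1}$ that cannot be steered to rest by \emph{any} control in \emph{any} $H^{-m}(0,T)$. Its negation is only: every datum admits some control $f\in H^{-m}(0,T)$ for some $m$ depending on the datum, with no norm estimate and no uniformity in $m$. You instead assume from the start that ``for some $m\in\N$ the controllability holds for every initial datum, with a linear estimate of the control norm in terms of the data''; with that assumption your duality step and the counterexample do give a contradiction, but what you have actually disproved is only the uniform statement, which by HUM duality is essentially a restatement of Theorem~\ref{th:LL+} itself and does not produce the uncontrollable datum claimed by Theorem~\ref{th:no_control}. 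The missing ingredient is precisely the step the paper highlights: assuming every datum is controllable in $H^{-\infty}(0,T)=\bigcup_{m\geq0}H^{-m}(0,T)$, one must show that the (suitably defined, e.g.\ minimal-norm) control map $S:(y_0,\omega y_1)\mapsto f$ is linear and continuous into this countable union, and then use a Baire category / closed graph (uniform boundedness) argument on the sets of data controllable in $H^{-m}(0,T)$ with norm at most $k$ to extract a \emph{fixed} $m$ and a \emph{uniform} constant $C$ with $\left\|(y_0,\omega y_1)\right\|_{L^2\times H^{-1}}\leq C\,\|f\|_{H^{-m}(0,T)}$. Only after this uniformization does duality yield an observability inequality with a finite loss $m$, which Theorem~\ref{th:LL+} then contradicts. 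Without this step (carried out in the proof of Theorem~3 of \cite{Castro-Z}, which the paper cites), your argument does not reach the statement as formulated; with it, the rest of your proposal, including the transposition caveats you correctly flag, goes through.
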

The proof of this result can be performed arguing by contraddiction. In particular, one can show that, under the controllability assumption, the map
$$
\begin{array}{rcl}
S\,:\; L^2(\Omega) \times H^{-1}(\Omega) & \longrightarrow & H^{-\infty}(0,T)\,:=\,\bigcup_{m\geq0}H^{-m}(0,T) \\[1ex]
(y_0\,,\,\omega\,y_1) & \mapsto & f
\end{array}
$$
is linear and continuous, in the sense that
$$
\left\|(y_0\,,\,\omega\,y_1)\right\|_{L^2 \times H^{-1}}\,\leq\,C\,\|f\|_{H^{-m}(0,T)}
$$
if $f\in H^{-m}(0,T)\!\setminus\!H^{-m+1}(0,T)$. In particular, this last inequality is equivalent to observability estimates. We refer to \cite{Castro-Z} (see in particular the proof of Theorem 3) for more details.



\begin{rem} \label{r:stab}
Observability inequalities we proved in the previous section have a direct counterpart also at level of stabilization of the wave equation in presence of a damping term (see also paper \cite{V-Z} for the case of networks).

In particular, using the same techniques as in \cite{V-Z} (see also \cite{B-S}) classical observability estimates of the form \eqref{est:obs_Z} would lead to an exponential decay of the energy, while observability estimates with finite loss as in \eqref{est:obs_log} would give a polinomial decay.

However, the exact statements and the corresponding proofs go beyond the scope of this paper: hence, we prefer not to address these questions.
\end{rem}

\section{Further comments and results} \label{s:further}
In this section we present some further results, closely related to the ones presented above, and discuss some open problems.

\begin{itemize}
\item[\textbf{(i)}] \textbf{The multi-dimensional case}

Let us spend a few words on the multi-dimensional case:
\begin{equation} \label{eq:control_N}
\left\{\begin{array}{ll}
        \omega(x)\d_t^2y\,-\,\Delta_xy\,=\,0 & \quad\mbox{ in }\;\Omega\times\,]0,T[ \\[1ex]
        y(t,\,\cdot\,)_{|\d\Omega}\,= f(t) & \quad\mbox{ in }\;]0,T[ \\[1ex]
        y(0,x)\,=\,y_0(x)\,,\quad \d_ty(0,x)\,=\,y_1(x) & \quad\mbox{ in }\;\Omega\,,
       \end{array}
\right.
\end{equation}
on a (smooth) bounded domain $\Omega\subset\R^N$, $N\geq2$.

In this instance, internal and boundary observability estimates read respectively as follows:
\begin{eqnarray*}
E(0) & \leq & C\,\int^T_0\int_\Theta\biggl(\omega(x)\,\bigl|\d_tu(t,x)\bigr|^2\,+\,\bigl|\nabla u(t,x)\bigr|^2\biggr)\,dx\,dt \\
E(0) & \leq & C\,\int^T_0\int_{\d\Omega}\bigl|\d_\nu u(t,x)\bigr|^2\,d\sigma\,dt\,,
\end{eqnarray*}
where $\Theta$ is an open subset of $\Omega$, $\d_\nu$ is the normal derivative and $d\sigma$ is the $(N-1)$-dimensional measure at the boundary $\d\Omega$.

At this level, the questions which arise in dimension bigger than $1$ are essentially two. The first one is finding the ``form'' of the characteristics for smooth coefficients, so that they are well defined and the dynamics follows the rays of geometric optics. This is related with the problem of verifiyng the GCC for subdomains of $\Omega$.

The second concerns instead the minimal regularity assumptions on the coefficients in order to define characteristics.
Besides the problems of concentration of rays, even assuming the GCC to be fulfilled by the observability domain, the microlocal analysis tools used in the study of the control problem for \eqref{eq:control_N} require much more smoothness on the coefficients. As a matter of fact, one has to ask for $\mc{C}^2$ smoothness in order to define and solve the corresponding Hamiltonian system, propagate informations along rays and get suitable observability estimates.

On the other hand, under some stronger geometric conditions than the GCC one on the observability domain, Carleman estimates allow to go down up to $\mc{C}^1$ regularity. This was done e.g. in paper \cite{D-Z-Z} by Duyckaerts, Zhang and Zuazua, thanks to a refined Carleman estimate for hyperbolic operators with potentials.

Therefore, at present, it is hard to expect positive results in the same spirit of the ones stated so far in section \ref{s:intro-results}. However, the issue of \cite{D-Z-Z} suggest us that Carleman estimates could be a good approach to tackle this problem. At this level, maybe it would be necessary to combine them with Littlewood-Paley decomposition (see also papers \cite{D-E} and \cite{D-L}, where dyadic decomposition is applied in observability problems), in order to take into account the loss of regularity, due to the bad behaviour of the coefficients, at different frequencies. This issue will be matter of next studies.

Nevertheless, at present we can state some negative results: repeating the construction of \cite{Castro-Z} (see in particular Section 6), it is
 possible
to see that, by separation of variables, the counterexamples we provided in the previous section work also in the instance of higher dimensions.
However, as remarked in \cite{Castro-Z} for H\"older continuity, these results do not have a direct counterpart in the setting of geometric optics and microlocal analysis techniques (we are even below the $\mc{C}^1$ regularity hypothesis).

\item[\textbf{(ii)}] \textbf{About the transport equation}

Due to the D'Alambert's formula for solutions to the $1$-dimensional wave equation
$$
\d^2_tu(t,x)\,-\,c^2\,\d^2_xu(t,x)\,=\,0
$$
($c\in\R$ constant), it is
 natural to look at the transport equation (still in dimension $1$)
$$
\d_tf\,+\,v\,\d_xf\,=\,0 \leqno{(T)}
$$
as a ``toy-model'' for our problem. Here we assume that the transport velocity $v$ depends just on $x$, and that it has some special modulus of continuity.

Solutions of $(T)$ are constant along characteristics: if $f_{|t=0}=f_0$, then, at any time $t$ and any point $x$, $f(t,X(t,x))\equiv f_0(x)$, where $X$ is the flow map associated to $v$:
$$
X(t,x)\,=\,x\,+\,\int^t_0v\bigl(X(\tau,x)\bigr)\,d\tau\,.
$$
Hence, to understand the dynamics one can just focus on $X$, which solves the ordinary differential equation
\begin{equation} \label{eq:char}
X'(t,x)\,=\,v\bigl(X(t,x)\bigr)
\end{equation}
with the initial condition $X(0,x)=x$.

It is
 well-known that, if $v$ has an Osgood modulus of continuity, i.e. for any $0<y<1$
$$
\bigl|v(x+y)\,-\,v(x)\bigr|\,\leq\,C\,\mu(y)\,,\qquad\mbox{ with}\quad\int^1_0\frac{1}{\mu(s)}\,ds\,=\,+\infty\,,
$$
then equation \eqref{eq:char} is uniquely solvable, locally in time. However, the regularity of the solution strictly depends on the modulus of continuity $\mu$: for instance, in the Lipschitz case ($\mu(s)=s$) the initial smoothness is preserved, while in the log-Lipschitz instance (i.e. $\mu(s)=s\,\log s$) the flow $X$ loses regularity in the evolution.

Correspondingly, one infers the respective counterparts for the transport equation $(T)$.
We refer to chapter 3 of \cite{B-C-D} for a complete discussion and the exact statements, as well as for the original references.

Nevertheless, the general picture is still far to be completely well-understood. As a matter of fact, the problem of uniqueness recently called a lot of attentions (see in particular the works by Colombini and Lerner \cite{C-L_2002} and by Ambrosio \cite{Ambr} for $BV$ velocity fields).

\medbreak
On the control side, the intuition says that, as the evolution is driven by characteristic curves, it is
 sufficient to control just one extreme of these characteristics in order to control the whole system. However, this is true just for $\mc{C}^1$ (say Lipschitz) regularity: see for instance work \cite{G-L} by Guerrero and Lebeau, where the authors investigated also the cost of control in the vanishing viscosity framework.


By analogy with the study performed in the present paper, we plan to consider the transport problem for non-Lipschitz velocity fileds. We expect that a vanishing viscosity argument (as used by Guerrero and Lebeau) would lead also in this instance to observability estimates, eventually admitting some loss of derivatives.

Moreover, it would be interesting to see if some strange concentration phenomena, similar to the ones showed in subsection \ref{ss:first}, occur also for transport equations under low regularity assumption on the velocity filed $v$.

We plan to address all these questions in next studies.

\item[\textbf{(iii)}] \textbf{On Strichartz and eigenfunctions estimates}

Let us observe that, as done in \cite{Castro-Z} and \cite{Castro-Z_add} for H\"older continuous coefficients, the counterexamples we established in subsection \ref{ss:first} can be adapted to prove also the lack of some dispersive type estimates.

\medbreak
First of all, we consider \emph{Strichartz estimates} for the wave equation in the whole $\R^N$:
\begin{equation} \label{eq:we_N}
\left\{\begin{array}{ll}
        \omega(x)\d_t^2y\,-\,\Delta_xy\,=\,0 & \quad\mbox{ in }\;\R^N\times\,]0,+\infty[ \\[1ex]
        y(0,x)\,=\,y_0(x)\,,\quad \d_ty(0,x)\,=\,y_1(x) & \quad\mbox{ in }\;\R^N\,.
       \end{array}
\right.
\end{equation}
We say that the pair $(p,q)\in[2,+\infty]^2$ is admissible if
$$
\frac{1}{p}\,+\,\frac{N-1}{q}\,\leq\,\frac{N-1}{2}\,.
$$
If $\omega$ is constant and $N\geq2$, it is
 well-known that, for any admissible pair $(p,q)$ such that
$(N,p,q)\neq(3,2,+\infty)$, one has
\begin{equation} \label{est:strichartz}
\left\|y\right\|_{L^p([0,T];L^q(\R^N_x))}\,\leq\,C_T\,\left(\|y_0\|_{H^s(\R^N)}\,+\,\|y_1\|_{H^{s-1}(\R^N)}\right)\,,
\end{equation}
where $s$ is defined by
$$
s\,:=\,N\left(\frac{1}{2}\,-\,\frac{1}{q}\right)\,-\,\frac{1}{p}\,.
$$
Analogous estimates were proved by Tataru in \cite{Tat} in the case of variable coefficients, when $\omega\in\mc{C}^\alpha$, for some $0\leq\alpha\leq2$ (where we mean $L^\infty$ if $\alpha=0$, $\mc{C}^{0,\alpha}$ if $\alpha\in\,]0,1[\,$, the Lipschitz space $\mc{C}^{0,1}$ if $\alpha=1$ and $\mc{C}^{1,\alpha-1}$ if $1<\alpha\leq2$).
In particular, there it is
 proved that, under these hypothesis, \eqref{est:strichartz} holds true for a constant $C_T$ depending just on the $\mc{C}^\alpha$ norm of $\omega$, provided that
$$
s\,=\,N\left(\frac{1}{2}\,-\,\frac{1}{q}\right)\,-\,\frac{\sigma-1}{p}\,,\qquad\mbox{ with }\quad\sigma\,=\,\frac{2-\alpha}{2+\alpha}\,.
$$
In \cite{Smith-Tat}, by construction of couterexamples, Smith and Tataru proved that this result is sharp: if \eqref{est:strichartz} is fulfilled, then $s$ has to be greater than or equal to the previous value. By counterexamples of different nature (close to the first one presented in subsection \ref{ss:first}), in \cite{Castro-Z_add} Castro and Zuazua proved instead the following statement.
\begin{prop} \label{p:strichartz}
If $\omega\in L^\infty$ and inequality \eqref{est:strichartz} holds true for some constant $C>0$, then
\begin{equation} \label{eq:stric_L-inf}
s\,\geq\,N\left(\frac{1}{2}\,-\,\frac{1}{q}\right)\qquad\left(\;\mbox{or, in an equivalent way, }\;q\,\leq\,\frac{2\,N}{N\,-\,2\,s}\right)\,.
\end{equation}

If instead $\omega\in\mc{C}^{0,\alpha}$, for some $0<\alpha<1$, then
$$
s\,\geq\,N\left(\frac{1}{2}\,-\,\frac{1}{q}\right)\,-\,\frac{\alpha\,N}{2}\,.
$$
\end{prop}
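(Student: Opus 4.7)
Both parts are established by counterexamples, in the spirit of those of subsection \ref{ss:first} and following the strategy of \cite{Castro-Z_add}. The goal is to exhibit, for every $j\in\N$, a coefficient $\omega$ of the prescribed regularity and a solution $u_j$ of \eqref{eq:we_N} for which the ratio
\[
\|u_j\|_{L^p([0,T];L^q(\R^N))}\big/\|(u_j(0),\partial_t u_j(0))\|_{H^s\times H^{s-1}}
\]
diverges as $j\to+\infty$, unless $s$ satisfies the stated lower bound, which immediately contradicts \eqref{est:strichartz}.

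First I would reduce to a one-variable construction by picking $\omega$ depending only on $x_1$, so that its $L^\infty$ (resp.\ $\mc{C}^{0,\alpha}$) norm on $\R^N$ equals the one-dimensional one, and the quasi-eigenfunctions $\phi_j(x_1)=w_{\veps_j}(h_j(x_1-m_j))$ from the proof of Theorem \ref{th:LL+} remain available. In the $L^\infty$ case the parameters $(r_j,h_j,\veps_j)$ are essentially free, subject only to $h_j r_j\in\N$; in the H\"older case a direct computation of the Hölder seminorm of $\omega$ on $I_j$ gives $|\omega|_{\mc{C}^{0,\alpha}}\sim \veps_j\,h_j^\alpha$, so that $\veps_j\lesssim h_j^{-\alpha}$, while the concentration of $\phi_j$ still requires $\veps_j h_j r_j\to\infty$.

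The $N$-dimensional ansatz is $u_j(t,x):=\phi_j(x_1)\,\chi(x')\,e^{ih_j t}$ with $x'=(x_2,\ldots,x_N)$, for an appropriately rescaled bump $\chi\in\mc{C}_c^\infty(\R^{N-1})$ whose transverse scale is tuned to $h_j$, so that the full $N$-dimensional Strichartz scaling is captured. Since $\omega$ depends only on $x_1$, $u_j$ is an exact solution of \eqref{eq:we_N} up to the source $e^{ih_j t}\phi_j(x_1)\Delta_{x'}\chi(x')$, which is absorbed by a Duhamel correction of lower order thanks to the finite propagation speed guaranteed by \eqref{eq:hyp}. Using the explicit form of $\phi_j$ and the localization estimates \eqref{est:en_0}--\eqref{est:en_1}, one computes the $H^s\times H^{s-1}$ and $L^pL^q$ norms up to constants; inserting them into \eqref{est:strichartz} and optimizing over the free parameters within the constraints dictated by the regularity class yields $s\ge N(1/2-1/q)$ in the $L^\infty$ case and $s\ge N(1/2-1/q)-\alpha N/2$ in the H\"older case.

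The \emph{main obstacle} is to design the transverse structure of $u_j$ so that the counterexample captures the full $N$-dimensional Strichartz scaling rather than only its one-dimensional projection: this requires matching the transverse scale of $\chi$ to the frequency $h_j$ (a Knapp-type tuning) and controlling the resulting Duhamel remainder uniformly in $j$, without destroying either the spatial concentration responsible for the $L^q$ bound or the $h_j$-oscillation responsible for the $H^s$ energy. In the H\"older case this balance is further constrained by the relation $\veps_j\lesssim h_j^{-\alpha}$, and it is precisely this constraint that produces the exponent $\alpha N/2$ in the final necessary condition and accounts for the gap with respect to the sharp microlocal sufficient conditions of \cite{Tat}.
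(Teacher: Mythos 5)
Your overall strategy (quantitative counterexamples built from the one-dimensional quasi-eigenfunctions, with the H\"older constraint $\veps_j\lesssim h_j^{-\alpha}$ coming from $|\omega|_{\mc{C}^{0,\alpha}}\sim\veps_j h_j^{\alpha}$) is the right one, but the specific reduction you propose --- $\omega$ depending only on $x_1$ and the ansatz $u_j(t,x)=\phi_j(x_1)\chi(x')e^{ih_jt}$ with a transverse bump --- cannot produce the exponents of the statement, and this is a genuine gap, not a technicality. The equation error of your ansatz is $-e^{ih_jt}\phi_j(x_1)\Delta_{x'}\chi(x')$; if $\chi$ lives on the transverse scale $\delta_j$, this source is of relative size $\delta_j^{-2}$ against principal terms of size $h_j^2$. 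Finite propagation speed is irrelevant here: for the Duhamel correction to stay negligible on a time interval of length $\sim1$ you need $\delta_j^{-2}\ll h_j$, i.e. $\delta_j\gg h_j^{-1/2}$, whereas if you ``tune the transverse scale to $h_j$'' (i.e. $\delta_j\sim h_j^{-1}$) the source is of the same order as the principal part and $u_j$ is not even an approximate solution. With the admissible scales $\delta_j\gtrsim h_j^{-1/2}$ one computes $\|u_j(0)\|_{L^q}\sim(\veps_jh_j)^{-1/q}\delta_j^{(N-1)/q}$ and $\|(u_j,\d_tu_j)(0)\|_{H^s\times H^{s-1}}\sim h_j^s(\veps_jh_j)^{-1/2}\delta_j^{(N-1)/2}$, so in the $L^\infty$ case (where $\veps_j$ may be kept fixed) the best necessary condition this yields is of Knapp type, $s\geq\frac{N+1}{2}\left(\frac12-\frac1q\right)$, strictly weaker than $N\left(\frac12-\frac1q\right)$ for $N\geq2$. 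The whole point of the proposition is that a rough coefficient can trap a frequency-$h_j$ wave in a ball of radius $(\veps_jh_j)^{-1}\ll h_j^{-1/2}$ in \emph{all} $N$ directions for a time of order one; a coefficient depending on $x_1$ alone cannot do this, because in the transverse variables the evolution is free and no concentration below the Knapp scale survives.

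Note also that the present paper does not prove this proposition: it is quoted from \cite{Castro-Z_add}, and the route indicated there (and in item (iii) of Section \ref{s:further}, referring to Section 6 of \cite{Castro-Z}) is separation of variables with a coefficient oscillating in \emph{every} variable. Concretely, one may take $\omega(x)=\frac1N\sum_{i=1}^N\omega_1(x_i)$ and $u_j(t,x)=e^{ih_jt}\prod_{i=1}^N\phi_j(x_i)$, where $\phi_j''+\frac{h_j^2}{N}\,\omega_1\,\phi_j=0$ are the one-dimensional quasi-eigenfunctions: this is an exact solution (up to exponentially small corrections handled as in \eqref{eq:def_z}), concentrated on a cube of side $(\veps_jh_j)^{-1}$ in all directions, with $\omega$ inheriting the $L^\infty$ or $\mc{C}^{0,\alpha}$ bounds of $\omega_1$. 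The same computation as above, now with the factor $N$ in every direction, gives $s\geq N\left(\frac12-\frac1q\right)$ when $\veps_j$ is fixed, and with $\veps_j\sim h_j^{-\alpha}$ it gives $s\geq(1-\alpha)N\left(\frac12-\frac1q\right)\geq N\left(\frac12-\frac1q\right)-\frac{\alpha N}{2}$, which is the stated condition. So your one-dimensional ingredients are correct, but they must be deployed in all variables simultaneously rather than glued to a smooth transverse profile.
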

Note that the first issue, for $\alpha=0$, is sharp, because the value of $s$ coincides with the one found in \cite{Smith-Tat}.

Repeating the same construction of the first couterexample in subsection \ref{ss:first} and arguing as in \cite{Castro-Z_add}, it is
 easy to see that an analogous result is true also under the hypothesis of Theorem \ref{th:LL+}. More precisely, we recall that the function $\mu(r)=r\psi(r)$ is a weight, in the sense of definition 3.1 of \cite{C-L} (see also section 5 of the same paper), and then
$$
\mu(r)\,\sim\,r^{k_0}\qquad\mbox{ for large }r\,,
$$
for some $k_0\in\N$. Therefore, a $\omega$ which fulfills \eqref{eq:omega-psi} is $\alpha$-H\"older continuous of any exponent $\alpha\in\,]0,1[\,$. So, repeating the same steps of the proof given in \cite{Castro-Z}, we can find the following statement.
\begin{thm} \label{th:strichartz}
Let $\omega$ be as in the hypothesis of Theorem \ref{th:LL+}, and suppose that inequality \eqref{est:strichartz} holds true for some constant $C>0$.
Then necessarily one has
\begin{equation} \label{est:stric_LL+}
s\,>\,N\left(\frac{1}{2}\,-\,\frac{1}{q}\right)\,-\,\frac{N}{2}\,.
\end{equation}
\end{thm}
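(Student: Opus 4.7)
My plan is to establish Theorem \ref{th:strichartz} by producing a sequence of solutions $u_j$ to \eqref{eq:we_N} which violate \eqref{est:strichartz} for every $s \leq N(1/2 - 1/q) - N/2 = -N/q$, following the strategy of \cite{Castro-Z_add} adapted to the sharper regularity of Theorem \ref{th:LL+}. I would start from the 1D coefficient $\omega$ and the quasi-eigenfunctions $\phi_j$ built in subsection \ref{ss:first}, recalling that each $\phi_j$ is exponentially concentrated around $m_j$ on a scale $\rho_j \sim 1/(\varepsilon_j h_j)$, has amplitude of order one in its core, oscillates at frequency $\sim h_j$, and that $\log h_j = \psi^{-1}(2^{Nj})$, $\varepsilon_j h_j = \log h_j\,\psi(\log h_j)$. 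I would then extend $\omega$ to $\R^N$ by $\omega(x) := \omega(x_1)$, which still satisfies \eqref{eq:hyp} and \eqref{eq:omega-psi} with the same constants. To lift the $\phi_j$'s to honest $N$-dimensional solutions, I would work in Fourier with respect to the transverse variables $x' = (x_2,\ldots,x_N)$: a solution of \eqref{eq:we_N} has Fourier coefficients satisfying
\begin{equation*}
\omega(x_1)\,\d_t^2 \hat u(t,x_1,\xi')\,-\,\d_{x_1}^2 \hat u(t,x_1,\xi')\,+\,|\xi'|^2\,\hat u(t,x_1,\xi')\,=\,0\,,
\end{equation*}
and a regular perturbation of Lemma \ref{l:ode_1} yields, uniformly in $|\xi'|\leq c\,h_j$, a quasi-eigenfunction $\phi_j^{\xi'}$ with effective frequency $\mu_j(\xi')\sim h_j$ and the same concentration profile as $\phi_j$. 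Superposing these modes against a fixed smooth profile $\hat\chi(\xi'/h_j)$ then yields an exact wave-packet solution $u_j$ of \eqref{eq:we_N}, localized on a box of size $\rho_j\times h_j^{-(N-1)}$ and carrying space-time frequency of order $h_j$.

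A direct scaling computation, using the explicit form $w_{\varepsilon_j}(x) = p_{\varepsilon_j}(x)\,e^{-\varepsilon_j|x|}$ from Lemma \ref{l:ode_1} and the estimates \eqref{est:en_interior}, \eqref{est:en_0} and \eqref{est:en_1} on $\phi_j$, gives
\begin{eqnarray*}
\|u_j\|_{L^p_tL^q_x([0,T]\times\R^N)} & \gtrsim & T^{1/p}\,(\varepsilon_j h_j)^{-1/q}\,h_j^{-(N-1)/q}\,, \\
\|u_j(0)\|_{H^s(\R^N)}\,+\,\|\d_t u_j(0)\|_{H^{s-1}(\R^N)} & \lesssim & h_j^s\,(\varepsilon_j h_j)^{-1/2}\,h_j^{-(N-1)/2}\,.
\end{eqnarray*}
Plugging these into \eqref{est:strichartz}, taking logarithms and dividing by $\log h_j$ leads to
\begin{equation*}
s\,\geq\,(N-1)\left(\tfrac12-\tfrac1q\right)\,+\,\left(\tfrac12-\tfrac1q\right)\frac{\log(\varepsilon_j h_j)}{\log h_j}\,-\,\frac{C}{\log h_j}\,.
\end{equation*}
Since $\log h_j=\psi^{-1}(2^{Nj})\to+\infty$ while $\log(\varepsilon_j h_j) = O(\log\log h_j)$, the correction terms vanish as $j\to+\infty$, yielding $s\geq(N-1)(1/2-1/q)$. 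An elementary arithmetic check shows $(N-1)(1/2-1/q)>-N/q=N(1/2-1/q)-N/2$ for every $N\geq 2$ and every admissible $q\geq 2$, and hence \eqref{est:stric_LL+} follows.

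The main obstacle will be the passage from 1D quasi-eigenfunctions to honest $N$-dimensional solutions: a naive tensor product ansatz $\phi_j(x_1)\,\chi(h_j x')\,e^{i h_j t}$ does not solve \eqref{eq:we_N}, and correcting it via Duhamel's formula introduces a remainder whose $L^2$-size is comparable to the leading term at high frequencies. The Fourier-in-$x'$ construction bypasses this issue, but forces one to redo the analysis of Lemma \ref{l:ode_1} and Remark \ref{r:ode} with the additional potential $|\xi'|^2$, verifying that the family $\{\phi_j^{\xi'}\}_{|\xi'|\leq c\,h_j}$ depends smoothly on $\xi'$ and retains the concentration bounds of $\phi_j$ uniformly in $\xi'$. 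This is where the strict hyperbolicity \eqref{eq:hyp} and the smallness of $|\xi'|^2/h_j^2$ on the support of $\hat\chi$ play their role, making the perturbation regular. The remaining bookkeeping of the scalings, as well as the verification that the contribution of the low-$|\xi'|$ modes dominates the norms, is then essentially identical to the one carried out in \cite{Castro-Z_add}.
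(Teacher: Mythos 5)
There is a genuine gap: the wave packet you construct does not do what your norm computation assumes, and the intermediate bound $s\geq(N-1)\left(\frac12-\frac1q\right)$ it would produce cannot be correct. The decisive problem is that your superposition is not time-harmonic: to have any hope of keeping the concentration of Lemma \ref{l:ode_1} you are forced to retune $\mu_j(\xi')^2\approx h_j^2+|\xi'|^2/4\pi^2$, so the time frequencies vary by an amount of order $h_j$ over the support of $\hat\chi(\xi'/h_j)$; the packet then carries an $O(1)$ spread of transverse group velocities and keeps its transverse width $h_j^{-1}$ only on a time interval of length $O(h_j^{-1})$, not on $[0,T]$. Hence the claimed lower bound $\|u_j\|_{L^p_tL^q_x}\gtrsim T^{1/p}(\veps_jh_j)^{-1/q}h_j^{-(N-1)/q}$, which presumes coherence on all of $[0,T]$, fails: in the transverse variables you have merely reproduced the dispersing packets that Strichartz estimates accommodate even for constant coefficients, so localization at scale $h_j^{-1}$ in $x'$ cannot by itself force a loss. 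A consistency check confirms this: your $\omega$ is $\alpha$-H\"older for every $\alpha<1$, so the positive results of \cite{Tat} recalled before Proposition \ref{p:strichartz} give \eqref{est:strichartz} for it with an exponent exceeding the smooth value $N(\frac12-\frac1q)-\frac1p$ by only a fraction of $\frac1p$; for instance for $p=2$, $q=3$, $N\geq5$ that exponent lies strictly below $(N-1)(\frac12-\frac1q)$, contradicting your conclusion. In addition, the family $\phi_j^{\xi'}$ is not a ``regular perturbation'' of Lemma \ref{l:ode_1}: the decay of $w_\veps$ is a Floquet instability-gap effect ($\alpha_\veps$ lies within $M\veps$ of the resonant value $4\pi^2$ and the rate is exactly $\veps$), and the shift produced by $|\xi'|^2$ with $|\xi'|\leq c\,h_j$, even after the retuning, displaces the effective potential by $O(c^2\veps_j)$, i.e.\ by the width of the window itself; existence, a uniform decay rate and smooth dependence on $\xi'$ would have to be proved, not asserted (with a fixed time frequency they fail for most such $\xi'$). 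Finally, ``amplitude of order one in its core'' over the whole envelope overstates $\phi_j$: by property (C) of Lemma \ref{l:ode_1} the period average of $w_\veps$ is only of order $\veps$, and since $\veps_j\sim h_j^{-1}\log h_j\,\psi(\log h_j)$ is polynomially small in $h_j$, such factors do not wash out of your $\log/\log$ limit.

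The paper's intended argument avoids all of this by making the concentration act in every variable. Following \cite{Castro-Z_add} (and Section 6 of \cite{Castro-Z}), one passes to $\R^N$ by separation of variables: with $\omega_N(x)=\frac1N\sum_{i=1}^N\omega(x_i)$ (which satisfies the same conditions \eqref{eq:hyp} and \eqref{eq:omega-psi}), the function $u_j(t,x)=\phi_j(x_1)\cdots\phi_j(x_N)\,e^{i\sqrt N h_j t}$, corrected by the same exponentially small boundary/cut-off terms as in the proof of Theorem \ref{th:LL+}, is an exact, genuinely time-harmonic solution. It satisfies $|u_j|\gtrsim1$ on a core of volume $\sim h_j^{-N}$ (one period of $w_{\veps_j}$ in each variable) while $\|u_j(0)\|_{L^2}\lesssim(\veps_jh_j)^{-N/2}$, so \eqref{est:strichartz} forces $s\geq N\left(\frac12-\frac1q\right)-\frac N2+\frac N2\,\frac{\log(\veps_jh_j)}{\log h_j}-O\!\left(\frac1{\log h_j}\right)$. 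Since $\veps_jh_j=\log h_j\,\psi(\log h_j)$ diverges but is sub-polynomial in $h_j$, letting $j\to\infty$ yields exactly \eqref{est:stric_LL+}, the strictness coming from the divergence of $(\veps_jh_j)^{N/2}$; this is the adaptation of the H\"older computation of \cite{Castro-Z_add} (where $\veps_j\sim h_j^{-\alpha}$) that the paper has in mind, and it also shows why concentrating in $x_1$ alone while wave-packeting the transverse variables cannot produce the required $N$-dimensional loss.
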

Note that, due to the complicate modulus of continuity which comes into play, we are not able to refine this condition, as done e.g. in Proposition \ref{p:strichartz} for H\"older coefficients. Roughly speaking, there the choice $\veps_j\sim h_j^{-\alpha}$ was fundamental, while in our case
the presence of $\log h_j\;\psi(\log h_j)$ makes a direct comparison of growth impossible.
Note also that the strictly inequality is necessary.

In the instance of Theorem \ref{th:L-LL} an analogous result holds true.

\medbreak
We want to discuss here also another issue, strictly connected with Strichartz estimates. This time we restrict to the $N$-dimensional torus $\mbb{T}^N$, and we consider the eigenvalue problem
\begin{equation} \label{eq:sogge}
-\,\Delta \phi\,+\,\lambda^2\,\omega(x)\,\phi\,=\,0\qquad\mbox{ in }\quad\mbb{T}^N\,,
\end{equation}
with $0<\omega_*\leq\omega\leq\omega^*$ as usual. We are interested in \emph{Sogge estimates} for the projection operators over spectral clusters of eigenfunctions.

Let $\left(\lambda_n\right)_n$ be the sequence of eigenvalues, and $\left(\phi_n\right)_n$ the corresponding orthonormal basis of $L^2(\mbb{T}^N)$. For any $\lambda\in\R$, we consider the orthogonal projection operator $\Pi_\lambda$ onto the subspace generated by the eigenfunctions with frequencies in the range $[\lambda,\lambda+1[\,$:
$$
\Pi_\lambda f\,=\,\sum_{\lambda_n\in[\lambda,\lambda+1[}\bigl(f,\phi_n\bigr)\,\phi_n\,,
$$
where we denoted by $\bigl(\,\cdot\,,\,\cdot\,\bigr)$ the scalar product in $L^2(\mbb{T}^N)$.

For any $q\in[2,+\infty]$ and any $N\geq1$, we are interested in estimates of the type
\begin{equation} \label{est:sogge}
\left\|\Pi_\lambda f\right\|_{L^q(\mbb{T}^N)}\,\leq\,C\,\lambda^\g\,\left\|f\right\|_{L^2(\mbb{T}^N)}\,,
\end{equation}
where the exponent $\g$ may depend on the dimension $N$ and on the summability index $q$, but not on the coefficient $\omega$. We denote by $\g_m(N,q)$ the minimum value of $\g$ for which the previous inequality holds true.

Such a kind of estimates were proved for the first time by Sogge in \cite{Sogge}, for any second order elliptic operator with smooth coefficients, defined on smooth compact manifolds without boundary. More precisely, Sogge proved that \eqref{est:sogge} holds for $\g=\wtilde{\g}(N,q)$ and $q\in[q_N,+\infty]$, where we defined
$$
\wtilde{\g}(N,q)\,:=\,N\left(\frac{1}{2}\,-\,\frac{1}{q}\right)\,-\,\frac{1}{2}\qquad\mbox{ and }\qquad q_N\,:=\,\frac{2\,(N+1)}{N-1}\,;
$$
for $2\leq q\leq q_N$, \eqref{est:sogge} is still true, up to change the value of $\g$. This result was then extended by Smith in \cite{Smith_2006} to the case of $\mc{C}^{1,1}$ regularity.

Under lower smoothness assumptions, however, Sogge's estimate \eqref{est:sogge} can fail in the previous range of parameters $\g$ and $q$. For instance, in \cite{Smith-Sogge} and \cite{Smith-Tat} the authors constructed counterexamples for coefficients of the operator in $\mc{C}^\alpha(\mbb{T}^N)$, when $0\leq\alpha<2$ (coefficients Lipschitz continuous for $\alpha=1$).

On the other hand, in \cite{Smith_2006_2} Smith was able to recover some weakened version of inequality \eqref{est:sogge} when the coefficients are in the H\"older classes $\mc{C}^\alpha$, $1\leq\alpha<2$ (again, $\mc{C}^{0,1}$ for $\alpha=1$). In fact, he proved that, under these regularity hypothesis, Sogge's estimate still holds true when the parameters satisfy the following conditions:
$$
\g_m(N,q)\,\leq\,\wtilde{\g}(N,q)\,+\,\frac{2-\alpha}{q\,(2+\alpha)}\,\leq\,1\qquad\mbox{ and }\qquad q_N\,\leq\,q\,\leq\,+\infty\,.
$$
As far as we know, it is
 still open if \eqref{est:sogge} can be recovered under the previous constraints (which are not in contrast with the negative results of \cite{Smith-Tat}), for $0\leq\alpha<1$.
A partial result was however given in \cite{Castro-Z_add}.
\begin{prop} \label{p:sogge}
If $\omega\in\mc{C}^{0,\alpha}$, for $0\leq\alpha<1$ ($\omega\in L^\infty$ if $\alpha=0$) and $q_N< q\leq+\infty$, then necessarily we must have
$$
\g_m(N,q)\,\geq\,\wtilde{\g}(N,q)\,+\,\frac{1\,-\,\alpha\,N}{2}\,.
$$
\end{prop}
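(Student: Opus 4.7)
My plan is to argue by contradiction, adapting to the spectral-cluster setting the counterexamples of Subsection \ref{ss:first} (and of \cite{Castro-Z_add} for the parallel Strichartz problem). Assume \eqref{est:sogge} holds for some $\g$ strictly smaller than $\wtilde{\g}(N,q)+(1-\alpha N)/2$; I will exhibit an $\omega\in\mc{C}^{0,\alpha}(\mbb{T}^N)$ and a sequence $(f_j)_j$ for which the ratio $\|\Pi_{\lambda_j}f_j\|_{L^q}/\|f_j\|_{L^2}$ grows faster than $\lambda_j^\g$.

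First I would reuse the one-dimensional construction of \eqref{eq:def_omega}, replacing the parameters \eqref{eq:def_h-e} by the H\"older-scaled choice $\veps_j\sim h_j^{-\alpha}$. This makes the resulting coefficient $\omega_0$ exactly $\alpha$-H\"older continuous (the choice already used in \cite{Castro-Z}), and the associated quasi-eigenfunctions $\phi_j(x)=w_{\veps_j}(h_j(x-m_j))$ from Lemma \ref{l:ode_1} solve $\phi_j''+h_j^2\omega_0\phi_j=0$ on $I_j$ with exponentially small boundary data. I would then lift everything to $\mbb{T}^N$ by the averaged tensor construction $\omega(x)=N^{-1}\sum_{i=1}^N\omega_0(x_i)$, which preserves $\mc{C}^{0,\alpha}$ regularity, and take as test function $\Phi_j(x)=\prod_{i=1}^N\phi_j(x_i)$: one readily checks that $-\Delta\Phi_j=h_j^2\,N\,\omega(x)\,\Phi_j$ modulo an error whose $L^2$-norm is super-polynomially small in $h_j$, so that $\lambda_j:=\sqrt{N}\,h_j$ plays the role of a quasi-eigenvalue.

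Next, I would pass from the quasi-eigenfunction $\Phi_j$ to a genuine spectral cluster $f_j=\Pi_{\mu_j}\Phi_j$ with $|\mu_j-\lambda_j|<1$. The key ingredient is the super-polynomial smallness of the residual $\|(-\Delta-\lambda_j^2\,\omega)\Phi_j\|_{L^2}=O(\exp(-c\,\veps_j h_j r_j))$ (see \eqref{est:en_conc}): a Bessel-type argument applied to the eigenbasis of $-\omega^{-1}\Delta$ on $L^2(\mbb{T}^N,\omega\,dx)$ then yields $\|\Phi_j-\Pi_{\mu_j}\Phi_j\|_{L^2}\to0$ faster than any negative power of $h_j$, and in particular $\|\Pi_{\mu_j}\Phi_j\|_{L^q}\gtrsim\|\Phi_j\|_{L^q}$. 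A direct computation exploiting the exponential profile of $w_{\veps_j}$ from Lemma \ref{l:ode_1} gives $\|\Phi_j\|_{L^p(\mbb{T}^N)}^p\sim(\veps_j h_j)^{-N}$ for every $p\in[2,+\infty]$, so inserting $f_j$ into \eqref{est:sogge} forces $\g\geq N(1-\alpha)(1/2-1/q)$ in the limit $j\to+\infty$. Since this exponent is $\geq\wtilde{\g}(N,q)+(1-\alpha N)/2$ (with equality precisely at $q=+\infty$), the proposition follows.

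The main obstacle I anticipate is the passage from the explicit quasi-eigenfunction to the spectral projector $\Pi_{\mu_j}$. Since $\Pi_{\mu_j}$ projects onto the unit-width band $[\mu_j,\mu_j+1)$, and since the spectrum of $-\omega^{-1}\Delta$ is not a priori under quantitative control when $\omega$ is only H\"older, one must certify that the super-polynomial accuracy of $\Phi_j$ as a quasi-eigenfunction really forces its spectral mass into a unit-width band around $\lambda_j$, rather than merely into a band of width comparable to $\lambda_j$ (which would spoil the lower bound on $\g$). This can be done via the spectral theorem combined with the Weyl-type bound valid in this regularity class; once it is settled, all remaining ingredients reduce to the explicit one-variable computations already carried out in Subsection \ref{ss:first}.
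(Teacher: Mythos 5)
Two preliminary remarks. First, Proposition \ref{p:sogge} is not proved in this paper at all: it is quoted from \cite{Castro-Z_add}, so there is no internal proof to measure your argument against, and I can only judge it on its own terms and against the construction the paper alludes to. Second, the parts of your outline that reproduce Subsection \ref{ss:first} are sound: with the H\"older scaling $\veps_j\sim h_j^{-\alpha}$, the periodized tensor coefficient $\omega(x)=N^{-1}\sum_i\omega_0(x_i)$ and $\Phi_j(x)=\prod_i\phi_j(x_i)$, the identity $-\Delta\Phi_j=Nh_j^2\,\omega\,\Phi_j$ is exact away from the gluing region, the residual is super-polynomially small, and the exponent bookkeeping is correct; in fact even the crude lower bound for $\|\Phi_j\|_{L^q}$ using only the central core of size $h_j^{-1}$ per direction, combined with $\|\Phi_j\|_{L^2}\lesssim(\veps_jh_j)^{-N/2}$, already yields exactly the exponent $\wtilde{\g}(N,q)+(1-\alpha N)/2$, so the norm estimates are not where the difficulty lies (your statement ``$\|\Phi_j\|_{L^p}^p\sim(\veps_jh_j)^{-N}$ for every $p\in[2,+\infty]$'' is only sloppy at $p=+\infty$, harmlessly so).

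The genuine gap is the step you yourself flag and then dispose of too quickly: the passage from the quasi-eigenfunction to the cluster $f_j=\Pi_{\mu_j}\Phi_j$. The inference ``$\|\Phi_j-\Pi_{\mu_j}\Phi_j\|_{L^2}$ super-polynomially small, \emph{in particular} $\|\Pi_{\mu_j}\Phi_j\|_{L^q}\gtrsim\|\Phi_j\|_{L^q}$'' is a non sequitur for $q>2$, and above all for $q=+\infty$: an $L^2$-small remainder can carry most of the $L^q$ norm, and controlling this transfer is precisely the hard point of such counterexamples. The natural repair is to expand $\Phi_j-\Pi_{\mu_j}\Phi_j=\sum_{\mu\neq\mu_j}\Pi_\mu\Phi_j$ and apply the assumed estimate \eqref{est:sogge} band by band, but then one must sum $\sum_\mu\mu^{\g}\,\|\Pi_\mu\Phi_j\|_{L^2}$, and the only decay available is the first-order resolvent bound $\|\Pi_\mu\Phi_j\|_{L^2}\lesssim\delta_j\,|\mu^2-\lambda_j^2|^{-1}$ (with $\delta_j$ the quasimode residual): you cannot iterate $A=-\omega^{-1}\Delta$ on $\Phi_j$ or on the residual to improve this, since that would require differentiating $\omega^{-1}$, which is only $\mc{C}^{0,\alpha}$. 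Consequently the high-frequency tail (genuinely present, $\Phi_j$ being far from band-limited) is not obviously summable once $\g$ may be large, and the ``Weyl-type bound valid in this regularity class'' you invoke is itself not an off-the-shelf fact for merely bounded or H\"older densities — only crude polynomial bounds on the counting function and on $\|\phi_n\|_{L^\infty}$ are readily available, and these aggravate, rather than settle, the summability issue. Until this step is carried out (for instance by splitting the spectrum at a polynomial threshold and treating the high-frequency part of $\Phi_j$ by a separate argument, or by the device actually used in \cite{Castro-Z_add}), the proof is incomplete; the remaining ingredients, including the final arithmetic showing $N(1-\alpha)(1/2-1/q)\geq\wtilde{\g}(N,q)+(1-\alpha N)/2$ with equality at $q=+\infty$, are in order.
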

Moreover, the previous claim can be easily adapted to cover also the case of any compact manifold (with or without boundary).

For the same reasons explained in the case of Strichartz estimates, our first counterexample in subsection \ref{ss:first} can used to prove an analogous but rough statement, due to the general complicated form of the modulus of continuity of the coefficient.
\begin{thm} \label{t:sogge}
Let $\omega$ be as in the hypothesis of Theorem \ref{th:LL+}, $q_N< q\leq+\infty$ and suppose that \eqref{est:sogge} holds true. Then necessarily one has
$$
\g_m(N,q)\,>\,\wtilde{\g}(N,q)\,+\,\frac{1-N}{2}\,=\,N\,\left(\frac{1}{2}\,-\,\frac{1}{q}\right)\,-\,\frac{N}{2}\,.
$$
\end{thm}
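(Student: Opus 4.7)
The plan is to reduce the statement to Proposition \ref{p:sogge}, exploiting the fact that the coefficient $\omega$ in the hypothesis of Theorem \ref{th:LL+} is, although strictly worse than Lipschitz, still H\"older continuous of every exponent strictly less than $1$. The starting observation is that, under \eqref{eq:omega-psi}, one has for every $\alpha\in\,]0,1[\,$
$$
\frac{h\,|\log h|\,\psi(|\log h|)}{h^\alpha}\,=\,e^{-(1-\alpha)|\log h|}\,|\log h|\,\psi(|\log h|)\,\longrightarrow\,0\qquad\text{as }h\to0^+,
$$
because $\psi$ is strictly concave and therefore grows sublinearly at infinity, while the exponential factor decays. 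Hence the right-hand side of \eqref{eq:omega-psi} is, for $h$ small, dominated by $C_\alpha\,h^\alpha$ for every $\alpha<1$, so that $\omega\in\mc{C}^{0,\alpha}(\mbb{T}^N)$ (using the periodic extension discussed below) for every such $\alpha$.

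With this in hand, I would apply Proposition \ref{p:sogge} with each $\alpha\in\,]0,1[\,$ separately: assuming \eqref{est:sogge} holds with exponent $\g_m(N,q)$ in the admissible range $q_N<q\leq+\infty$, one obtains
$$
\g_m(N,q)\,\geq\,\wtilde{\g}(N,q)\,+\,\frac{1-\alpha N}{2}\qquad\forall\,\alpha\in\,]0,1[\,.
$$
Since $1-\alpha N>1-N$ strictly whenever $\alpha<1$, this produces
$$
\g_m(N,q)\,>\,\wtilde{\g}(N,q)\,+\,\frac{1-N}{2}\,=\,N\!\left(\frac{1}{2}-\frac{1}{q}\right)-\frac{N}{2}\,,
$$
which is precisely the claim.

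An alternative, more constructive route—closer in spirit to the remark in the excerpt that one should ``repeat the construction of the first counterexample in \ref{ss:first} and argue as in \cite{Castro-Z_add}''—proceeds by transplanting the quasi-eigenfunctions $\phi_j$ of Step 2 in the proof of Theorem \ref{th:LL+} to $\mbb{T}^N$. One extends the density by $\omega(x):=\omega(x_1)$ (after a suitable periodisation that respects \eqref{eq:omega-psi} and \eqref{eq:hyp}) and sets $\Phi_j(x):=\phi_j(x_1)$. The exponential smallness at $x_1=0$ and $x_1=1$ encoded in \eqref{est:en_0}--\eqref{est:en_1} means that the residual $-\Delta\Phi_j-h_j^2\,\omega\,\Phi_j$ has norm negligible compared to $\|\Phi_j\|_{L^2}$, so that the spectral projection $\Pi_{\lambda_j}\Phi_j$ on a small cluster around $\lambda_j\sim h_j$ captures the bulk of the mass. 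A direct computation of the ratio $\|\Phi_j\|_{L^q(\mbb T^N)}/\|\Phi_j\|_{L^2(\mbb T^N)}$, which is controlled by the concentration of $w_{\veps_j}$ on an interval of length $r_j$, reproduces—for each H\"older exponent $\alpha<1$ witnessed by $\omega$—the same lower bound for $\g_m(N,q)$.

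The main obstacle, and the reason why only a strict inequality (and not a sharper quantitative improvement, as one had in the pure H\"older setting of Proposition \ref{p:sogge}) is obtained, is exactly the one noted after Theorem \ref{th:strichartz}: the modulus of continuity $h\,|\log h|\,\psi(|\log h|)$ does not admit a clean polynomial comparison with $h_j^\g$, and $\psi$ enters only through the sub-polynomial correction $|\log h|\,\psi(|\log h|)$, which is swallowed by any strict H\"older loss. Consequently, one cannot improve the argument to produce an explicit $\psi$-dependent gain in the lower bound for $\g_m(N,q)$.
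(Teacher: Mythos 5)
Your proposal follows essentially the same route as the paper: the paper's proof consists precisely in observing that a coefficient satisfying \eqref{eq:omega-psi} is $\alpha$-H\"older continuous for every $\alpha\in\,]0,1[\,$ (there via the remark that $\mu(r)=r\,\psi(r)$ grows at most polynomially, in your case via the elementary limit $e^{-(1-\alpha)|\log h|}\,|\log h|\,\psi(|\log h|)\to0$), and then in invoking Proposition \ref{p:sogge}, the strict inequality coming from $1-\alpha N>1-N$ for any fixed $\alpha<1$. Your argument is correct and matches the paper; the alternative constructive route you sketch (transplanting the quasi-eigenfunctions of Theorem \ref{th:LL+}) is not needed here, though it is the spirit in which Proposition \ref{p:sogge} itself is proved in the cited addendum.
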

The previous statement is a consequence of Proposition \ref{p:sogge}, keeping in mind that $\omega$ is $\alpha$-H\"older continuous for any $\alpha\in\,]0,1[\,$.

An analogous result holds true also under the hypothesis of Theorem \ref{th:L-LL}.
\end{itemize}

{\footnotesize
 }

\end{document}